\documentclass[10pt]{article}
\usepackage{amsmath,amsthm,amsfonts,latexsym,amsopn,verbatim,amscd,amssymb}
%\usetikzlibrary{arrows,matrix}
\newtheorem{theorem}{Theorem}[section]
\newtheorem{lemma}[theorem]{Lemma}
\newtheorem{proposition}[theorem]{Proposition}

\newtheorem{corollary}[theorem]{Corollary}

\newtheorem{remark}[theorem]{Remark}

\newtheorem{observation}[theorem]{Observation}

\numberwithin{equation}{section}

\DeclareMathOperator{\spec}{Spec}
\DeclareMathOperator{\lspec}{Lspec}
\DeclareMathOperator{\cent}{Cent}

\setlength{\topmargin}{-0.5in}
\setlength{\topskip}{0.5in}    % between header and text
\setlength{\textheight}{9in} % height of main text
\setlength{\textwidth}{6.25in}    % width of text
\setlength{\oddsidemargin}{0.1in} % odd page left margin
\setlength{\evensidemargin}{0.1in} % even page left margin

\begin{document}
\begin{center}
		\textbf{  SPECTRUM AND ENERGY OF NON-COMMUTING GRAPHS OF FINITE GROUPS   }
     	\end{center}
     	
%  	\vspace{0.35 cm}
    
  	\begin{center}
 \textbf { Walaa Nabil Taha Fasfous$^{1}$ and Rajat Kanti Nath$^{2}$  }
  	\end{center}	
  \centerline {$^{1,2}$Department of Mathematical Sciences, Tezpur University}
   \centerline { Napaam-784028, Sonitpur, Assam, India}
  \centerline { E-mails: $^{1}$w.n.fasfous@gmail.com,  $^{2}$rajatkantinath@yahoo.com (corresponding author)}
%\title{spectrum and energy of non-commuting graphs for some finite groups}
%\author{w.n.fasfous }
%\date{May 2019}
%\maketitle
\vspace{.3cm}

\textbf{Abstract.}
%Let $G$ be a finite  non-abelian group. The non-commuting graph  of $G$, denoted by ${\Gamma}_{nc}(G)$, is  a simple undirected graph whose  vertex set is $G\backslash Z(G)$, and two vertices  $a$,$b$ are adjacent if and only if $ab \neq ba$. In this paper, we compute spectrum and energy of ${\Gamma}_{nc}(G)$ for some finite groups.
Let $G$ be a finite  non-abelian group and ${\Gamma}_{nc}(G)$ be its non-commuting graph.
In this paper, we compute spectrum and energy of ${\Gamma}_{nc}(G)$ for  certain classes of finite groups. As a consequence of our results we construct infinite families of integral complete $r$-partite graphs. 
%Our computations show that $E({\Gamma}_{nc}(G)) \leq LE({\Gamma}_{nc}(G))$,  where $E({\Gamma}_{nc}(G))$ and $LE({\Gamma}_{nc}(G))$ are the energy and Laplacian energy of 
%${\Gamma}_{nc}(G)$ respectively.
We compare energy and Laplacian energy  (denoted by $E({\Gamma}_{nc}(G))$ and $LE({\Gamma}_{nc}(G))$  respectively) of ${\Gamma}_{nc}(G)$ and conclude that $E({\Gamma}_{nc}(G)) \leq LE({\Gamma}_{nc}(G))$ for those groups except for some non-abelian groups of order $pq$. This shows that the conjecture posed in [Gutman, I., Abreu, N. M. M., Vinagre, C. T.M., Bonifacioa, A. S and Radenkovic, S.  Relation between energy and Laplacian energy, {\em MATCH Commun.  Math.  Comput. Chem.}, {\bf 59}: 343--354, (2008)] does not hold for non-commuting graphs of certain finite groups, which also produces   new families of counter examples to the above mentioned conjecture.

\vspace{.2cm}
\noindent {\small{\textit{\textbf{Key words:}}  Non-commuting graph, spectrum and energies, integral graph.}}

\noindent {\small{\textit{\textbf{2010 Mathematics Subject Classification:}}
05C25, 05C50, 15A18, 20D60.}}

\section{Introduction}
Let $G$ be a finite  non-abelian group and ${\Gamma}_{nc}(G)$ be its non-commuting graph. Then ${\Gamma}_{nc}(G)$ is  a simple undirected graph whose vertex set is $G \setminus Z(G)$ (where $Z(G)$ is the center of $G$), and two vertices $a, b$ are adjacent if and only if $ab \neq ba$. The study of $\Gamma_{nc}(G)$ was originated from the works of Erd\"os  and Neumann \cite{N2}. 
%Various aspects of non-commuting graphs of different finite groups can be found in \cite{{AAM},{D},{MP},{MSZZ}}.
Interactions between the graph theoretical properties of $\Gamma_{nc}(G)$ and the group theoretical properties of $G$ can be found in  \cite{AAM,D,MP,MSZZ}.
Properties of non-commuting graphs of some finite simple groups and dihedral groups can be found in \cite{Solomon-13, Talebi}. In recent years spectral properties of $\Gamma_{nc}(G)$ become an interesting topic of research. The complement of $\Gamma_{nc}(G)$, known as commuting graph of $G$,  is also studied widely in  past years. Various spectra and energies of commuting graphs of several classes of finite non-abelian groups were computed and analyzed in \cite{DBN,DN1,DN2,DN3,DN4,Nath-2018}. 
It is worth mentioning that there is a nice relation between the   Laplacian spectrum of a graph and the  Laplacian spectrum of its complement (see \cite[Theorem 3.6]{Mohar-91}). Exhibiting this relation, in \cite{DN-2018} and \cite{DN}, Laplacian  spectrum and  Laplacian energy  of  ${\Gamma}_{nc}(G)$ were computed for several classes of finite non-abelian groups. Unfortunately, there is no similar relation between  the spectrum of a graph and the spectrum of its complement.  This motivates us to compute  spectrum and energy of ${\Gamma}_{nc}(G)$ though the spectrum and energy of commuting graphs of several classes of finite non-abelian groups are known. 
%In 2013, Abdussakir computed the the spectrum an Laplacian spectrum of non-commuting graphs of dihedral groups. Recent 
A few results on various spectra and energies of non-commuting graphs of finite non-abelian groups can also be found in \cite{AEN,GG,GGB, Mse2017}.

 The spectrum of a graph $\mathcal{G}$ denoted by $\spec(\mathcal{G})$ is the set  $\{\lambda_{1}^{k_1}, \lambda_{2}^{ k_2}, \dots, \lambda_{n}^{k_n}\}$, where $\lambda_1, \lambda_2, \dots, \lambda_n$ are the eigenvalues of the adjacency matrix of $\mathcal{G}$ (denoted by ${\mathcal{A}}(\mathcal{G})$) with multiplicities $k_1, k_2, \dots, k_n$, respectively. Similarly, the Laplacian spectrum of  $\mathcal{G}$ is the set  $\lspec({\mathcal{G}}) = \{\mu_{1}^{l_1}, \mu_{2}^{l_2}, \dots, \mu_{m}^{ l_m}\}$, where $\mu_1, \mu_2, \dots, \mu_m$, with multiplicities $l_1, l_2, \dots, l_m$ respectively, are the eigenvalues of the Laplacian matrix of $\mathcal{G}$ given by   $L(\mathcal{G}) = D(\mathcal{G}) - {\mathcal{A}}(\mathcal{G})$. Here $D(\mathcal{G})$ stands for the degree matrix of $\mathcal{G}$.    The energy  and Laplacian energy  of a graph $\mathcal{G}$ are given by  
\[
E(\mathcal{G}) = \sum_{\lambda \in \spec(\mathcal{G})} {|\lambda|} \text{ and }
LE(\mathcal{G}) = \sum_{\mu \in \lspec({\mathcal{G}})}\left|\mu - \frac{2|e(\mathcal{G})|}{|v(\mathcal{G})|}\right|
\]	
respectively, where $v(\mathcal{G})$ and $e(\mathcal{G})$ denotes the set of vertices and edges of $\mathcal{G}$. In \cite{GAVBR}, it was conjectured that
\begin{equation}\label{conjecture 1}
E(\mathcal{G}) \leq LE(\mathcal{G}).
\end{equation}
However, \eqref{conjecture 1} was disproved in \cite{LL, SSM}. One of the objectives of this paper is to check whether \eqref{conjecture 1} holds for non-commuting graphs of finite non-abelian groups. 

In Section 2, we first compute  $E({\Gamma}_{nc}(G))$ for some  finite non-abelian groups (viz. $V_{8n}$ for odd $n$, $SD_{8n}$ and groups for which $\frac{G}{Z(G)} \cong {\mathbb{Z}}_p \times {\mathbb{Z}}_p$) whose $\spec({\Gamma}_{nc}(G))$ are known. After that we consider non-commuting graphs of the group of order $pq$, Hanaki groups, and groups  for which $\frac{G}{Z(G)} \cong D_{2m}$ and compute their spectrum and energy. It is also illustrated that using our results one may compute $\spec({\Gamma}_{nc}(G))$ and   $E({\Gamma}_{nc}(G))$ for the groups $M_{2rs}, D_{2m}, Q_{4m}$  and $U_{6n}$. In Section 3, it is  observed that there are infinite families of finite non-abelian groups such that  ${\Gamma}_{nc}(G)$ is integral. A graph is called integral if $\spec(\mathcal{G})$ contains only integers. Since it is difficult to characterize integral graphs in general, people restrict the study of integral graph for some particular families of graphs. In \cite{Roitman-84} Roitman constructed  an infinite family of integral complete tripartite graphs and remarked about the existence  of   infinite families of integral complete $r$-partite graph for $r > 3$. In \cite{Wlh-04}, Wang,  Li and Hoede, obtained a necessary and sufficient condition for integral complete $r$-partite graph. As a consequence of our results we also  construct infinite families of integral complete $r$-partite graphs.  
In Section 4, we compare $E({\Gamma}_{nc}(G))$ and $LE({\Gamma}_{nc}(G))$ for all the groups  considered in Section 2 and it is observed that $E({\Gamma}_{nc}(G)) \leq LE({\Gamma}_{nc}(G))$ if $|G| \ne pq$ for some primes $p$ and $q$. This shows that \eqref{conjecture 1} does not hold  for non-commuting graphs of  some  non-abelian groups of order $pq$, which produces new families of counter examples for \eqref{conjecture 1}.

%The non-commuting graph $\Gamma_{nc}(G)$ of $G$ is always connected \cite{AAM}.

%
We write  $\chi_M(\lambda)$ to denote the characteristic polynomial (in $\lambda$) of a square matrix $M$ of size $n$ and so $\chi_M(\lambda) = \det(\lambda I_{n \times n} - M)$, where $I_{n \times n}$ is the identity matrix of size $n$.  
Let $J_{m \times n}$ be the  matrix with all entries one. It is well-known that 
\begin{equation}\label{chi-Jn}
\chi_{J_{n \times n}}(\lambda) = \lambda^{n - 1}(\lambda - n).
\end{equation}
We also have the following useful results.  
\begin{lemma}
\label{lem1} {\rm {\cite{CRS} }}
Let $N$ be the block matrix given by
\[
N =  \begin{bmatrix}
               0_{m \times m}          & B_{m \times n} \\
               B^{T} _{m \times n}     & A_{n \times n}
              \end{bmatrix}.
\]
 Then 
\[
\chi_N(\lambda) =  \lambda^{m-n}\det(\lambda^2 I_{n\times n} - \lambda A_{n \times n} - B_{m\times n}^{T}B_{m\times n}),  
\]
where $B_{m\times n}^{T}$ is the transpose of $B_{m\times n}$. 
 \end{lemma}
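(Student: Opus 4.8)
The plan is to compute $\chi_N(\lambda)=\det(\lambda I_{(m+n)\times(m+n)}-N)$ directly by exploiting the $2\times 2$ block structure of $N$ through the Schur complement (block) determinant formula. Writing
\[
\lambda I - N = \begin{bmatrix} \lambda I_{m\times m} & -B_{m\times n} \\ -B_{m\times n}^{T} & \lambda I_{n\times n} - A_{n\times n} \end{bmatrix},
\]
the top-left block $\lambda I_{m\times m}$ is invertible whenever $\lambda\neq 0$, with inverse $\lambda^{-1}I_{m\times m}$. This is precisely the hypothesis needed to invoke the standard identity $\det\begin{bmatrix} P & Q \\ R & S\end{bmatrix}=\det(P)\det(S-RP^{-1}Q)$ in the regime $\lambda\neq 0$.

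Next I would substitute $P=\lambda I_{m\times m}$, $Q=-B_{m\times n}$, $R=-B_{m\times n}^{T}$, and $S=\lambda I_{n\times n}-A_{n\times n}$, and simplify. The Schur correction term is $RP^{-1}Q=\lambda^{-1}B_{m\times n}^{T}B_{m\times n}$, so that
\[
\chi_N(\lambda) = \lambda^{m}\det\!\left(\lambda I_{n\times n}-A_{n\times n}-\lambda^{-1}B_{m\times n}^{T}B_{m\times n}\right).
\]
Factoring $\lambda^{-1}$ out of the $n\times n$ determinant contributes a scalar factor $\lambda^{-n}$ and leaves $\det(\lambda^2 I_{n\times n}-\lambda A_{n\times n}-B_{m\times n}^{T}B_{m\times n})$; combining the powers of $\lambda$ yields $\lambda^{m-n}\det(\lambda^2 I_{n\times n}-\lambda A_{n\times n}-B_{m\times n}^{T}B_{m\times n})$, which is exactly the claimed expression.

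The single point that needs care — and the main (though mild) obstacle — is that the Schur complement step is valid only for $\lambda\neq 0$, since $\lambda I_{m\times m}$ is singular at $\lambda=0$; moreover, when $m<n$ the factor $\lambda^{m-n}$ is not literally a polynomial. Both issues are resolved by a polynomial-identity argument: $\chi_N(\lambda)$ is a genuine polynomial of degree $m+n$, and the right-hand side agrees with it at the infinitely many values $\lambda\neq 0$, so the two rational functions coincide identically. In particular, this forces $\det(\lambda^2 I_{n\times n}-\lambda A_{n\times n}-B_{m\times n}^{T}B_{m\times n})$ to be divisible by $\lambda^{n-m}$ when $m<n$, so the product is indeed a polynomial; equivalently one may argue by continuity of both sides in $\lambda$. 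This completes the proof.
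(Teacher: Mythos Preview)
Your proof is correct: the Schur-complement identity applied to $\lambda I - N$ with the invertible top-left block $\lambda I_{m\times m}$ gives exactly the stated formula, and your polynomial-identity (equivalently, continuity) argument legitimately handles both the excluded value $\lambda=0$ and the case $m<n$. The paper does not supply its own proof of this lemma but simply cites it from \cite{CRS}, so there is no argument to compare against; your derivation is the standard one.
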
 
Let  $M$ = $[m_{ij}]$ and $N$ = $[n_{ij}]$ be matrices of size $a \times p$ and $q \times b$, respectively. The tensor product (or Kronecker product) of $M$ and $N$, denoted by $M \otimes N$, is the matrix of size $aq \times pb$   obtained from $M$ by replacing each entry $m_{ij}$ of $M$ with the $q \times b$ matrix $m_{ij}N$.
% $(1\leq i\leq a, 1\leq j\leq p )$.

\begin{theorem}\label{thm1} {\rm {\cite{BC} }}
Let $X$ and $Y$ be square matrices of size $m$ and $n$, respectively. If $\lambda_1, \lambda_2, \dots, \lambda_m$ are eigenvalues of $X$ and $\mu_1, \mu_2, \dots, \mu_n$ are eigenvalues of $Y$, then  the  eigenvalues of $X \otimes Y$ are given by  
$\lambda_i\mu_j$ for $1\leq i\leq m, 1\leq j\leq n$. 
\end{theorem}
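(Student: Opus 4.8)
The plan is to prove the statement by reducing $X$ and $Y$ to upper triangular form simultaneously inside the tensor product, so that the eigenvalues of $X \otimes Y$ can be read directly off a diagonal. The essential algebraic tool is the \emph{mixed-product property} of the Kronecker product, namely that $(A \otimes B)(C \otimes D) = (AC) \otimes (BD)$ whenever the products $AC$ and $BD$ are defined; this follows by a direct block-wise computation from the definition of $\otimes$ given above. From it one deduces that if $P$ and $Q$ are invertible then $P \otimes Q$ is invertible with $(P \otimes Q)^{-1} = P^{-1} \otimes Q^{-1}$, and consequently
\[
(P \otimes Q)^{-1}(X \otimes Y)(P \otimes Q) = (P^{-1} X P) \otimes (Q^{-1} Y Q).
\]
Thus a change of basis applied separately to $X$ and to $Y$ induces a similarity transformation on $X \otimes Y$, which preserves the characteristic polynomial and hence the full multiset of eigenvalues.

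Second, I would invoke Schur triangularization over $\mathbb{C}$: there exist invertible (indeed unitary) matrices $P$ and $Q$ such that $T_1 := P^{-1} X P$ and $T_2 := Q^{-1} Y Q$ are upper triangular, with diagonal entries $\lambda_1, \dots, \lambda_m$ and $\mu_1, \dots, \mu_n$ respectively, these being the eigenvalues of $X$ and $Y$ listed with multiplicity. By the identity above, $X \otimes Y$ is similar to $T_1 \otimes T_2$, so it suffices to analyze the eigenvalues of $T_1 \otimes T_2$.

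Third, I would observe that the Kronecker product of two upper triangular matrices is again upper triangular: replacing each entry $(T_1)_{ij}$ by the block $(T_1)_{ij} T_2$ leaves only blocks on or above the block diagonal, while within each diagonal block the factor $T_2$ is itself triangular. The resulting diagonal entries are exactly the products $(T_1)_{ii}(T_2)_{jj} = \lambda_i \mu_j$ for $1 \le i \le m$ and $1 \le j \le n$. Since the eigenvalues of a triangular matrix are precisely its diagonal entries, the $mn$ eigenvalues of $T_1 \otimes T_2$, and therefore of $X \otimes Y$, are exactly the $mn$ products $\lambda_i \mu_j$, which is the claim.

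The only delicate point is the treatment of non-diagonalizable $X$ or $Y$. A naive argument would note that if $Xu = \lambda u$ and $Yv = \mu v$ then $(X \otimes Y)(u \otimes v) = (Xu)\otimes(Yv) = \lambda\mu\,(u \otimes v)$, exhibiting $\lambda\mu$ as an eigenvalue; but this produces a full eigenbasis of $X \otimes Y$ only when $X$ and $Y$ are each diagonalizable, and it does not by itself settle the repeated-eigenvalue case or the correct multiplicities. The triangularization route above sidesteps this obstacle entirely, since it never requires the existence of eigenbases and delivers the complete multiset of eigenvalues, with correct multiplicity, in a single pass.
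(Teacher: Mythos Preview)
Your proof is correct. The paper, however, does not supply its own proof of this theorem at all: it is quoted as a known result from \cite{BC} and used as a black box. Your argument via the mixed-product property and Schur triangularization is the standard textbook route (and is essentially what one finds in references such as \cite{BC}); it is complete, handles multiplicities correctly, and avoids the diagonalizability pitfall you rightly flag.
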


 \section{Spectra and energies}
% Let $A$ be  an adjacency matrix of a graph $\Gamma$, the characteristic polynomial of $\Gamma$ is defined as $ \chi\Gamma(\lambda )$ = $|\lambda I-A|$, where $I$ is the identity matrix. the spectrum of a graph $\Gamma$ form by the eigenvalues of $\Gamma$, where the eigenvalues are the roots of the characteristic polynomial of this graph, see\cite{{B2}, {C}}. All eigenvalues of $A$ are real if $A$ is  a real symmetric matrix.
%  Firstly, in this section we will compute energy of non-commuting graphs for all  groups in Theorem \ref{thm2}.
%  
We begin this section by computing energies of non-commuting graphs of  finite groups such that  spectra  of their non-commuting graphs are known (already computed in \cite {GGB}).  
%The following result follows form  Theorem \ref{thm2}.  
\begin{theorem} \label{thm3}
Let $G$ be a finite group. 
\begin{enumerate}
%\item If $U_{6n} = \langle a, b : a^{2n} =1, b^3 = 1, a^{-1}ba = b^{-1}\rangle $ then  $E(\Gamma_{nc}(U_{6n})) = 2n(1+ \sqrt{7})$.
%\item If $T_{4n} = \langle a, b : a^{2n} =1, a^n =b ^2, b^{-1}ab = a^{-1}\rangle $ then $E(\Gamma_{nc}(T_{4n})) = 2(n-1) + 2\sqrt{(5n-1)(n-1)}$. 
%\item If $D_{2m} = \langle a, b: a^m = b^2 =1, bab^{-1}=a^{-1} \rangle$ then 
%\[
%E(\Gamma_{nc}(D_{2m})) = \begin{cases}
%(m-1) + \sqrt{(5m-1)(m-1)}, &\text{ if $m$ is odd}\\
%%2(\frac{n}{2}-1) +  2\sqrt{(\frac{5n}{2}-1)(\frac{n}{2}-1)}, &\text{ if $m$ is even}.
%(m - 2) + \sqrt{(5m - 2)(m - 2)}, &\text{ if $m$ is even}.
%\end{cases}
%\]
\item If   $V_{8n} = \langle a, b : a^{2n} = b ^4 =1, b^{-1}ab^{-1} = bab = a^{-1}\rangle$ (where $n$ is odd) then  
\[
E(\Gamma_{nc}(V_{8n})) = 2(2n-1) + 2\sqrt{(2n - 1)(10n - 1)}.
\]
\item If $SD_{8n} = \langle a, b : a^{4n} = b ^2 =1, bab = a^{2n-1}\rangle$ then 
\[
E(\Gamma_{nc}(SD_{8n})) = \begin{cases}
2(n-1) + 4\sqrt{(n-1)(5n-1)}, &\text{ if $n$ is odd}\\
2(2n-1) + 2\sqrt{(2n - 1)(10n - 1)}, &\text{ if $n$ is even}.
\end{cases}
\]
\item If $\frac{G}{Z(G)} \cong {\mathbb{Z}}_p \times {\mathbb{Z}}_p$, where $p$ is any prime, then 
\[
E(\Gamma_{nc}(G) = 2p(p - 1)|Z(G)|.
\]
In particular, if $G$ is non-abelian and $|G| = p^3$ then $E(\Gamma_{nc}(G) = 2p^2(p - 1)$.
\end{enumerate}
\end{theorem}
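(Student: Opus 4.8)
The plan is to compute the energy in each of the three cases by first determining the spectrum of the non-commuting graph (which is known from the cited reference), then summing the absolute values of the eigenvalues. The key structural fact underlying all three cases is that $\Gamma_{nc}(G)$ is always a complete multipartite graph: two vertices $a,b \in G \setminus Z(G)$ fail to be adjacent exactly when they commute, and the relation of belonging to the same centralizer coset partitions $G \setminus Z(G)$ into blocks that form the parts of a complete multipartite graph. So in each case I would identify the partition, read off the part sizes, and apply the eigenvalue formula for complete multipartite graphs.

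For a complete multipartite graph $K_{n_1,\dots,n_r}$ the adjacency spectrum is well understood. I would use the block structure: the adjacency matrix is $J - I$ restricted across parts, i.e., it equals $J_{N\times N} - \bigoplus_i J_{n_i \times n_i}$ where $N = \sum_i n_i$. Using \eqref{chi-Jn} together with Lemma~\ref{lem1} (or direct block-diagonalization of $J$ matrices), one obtains that each part of size $n_i$ contributes the eigenvalue $-n_i$ (no longer using Lemma; rather the standard computation) giving eigenvalue $0$ with multiplicity $n_i - 1$ within that block and the remaining $r$ eigenvalues come from the $r\times r$ quotient matrix. For the regular complete multipartite graphs that arise here, the spectrum simplifies considerably. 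In the $V_{8n}$ and $SD_{8n}$ cases the relevant graph is complete multipartite with parts of two distinct sizes, so the nonzero eigenvalues are the roots of a quadratic; in case (c), where $G/Z(G)\cong \mathbb{Z}_p\times\mathbb{Z}_p$, the graph is complete $(p+1)$-partite with all parts equal to $|Z(G)|$ (the $p+1$ centralizers of non-central elements each have order $p|Z(G)|$, so each part has size $(p-1)|Z(G)|$), making it a regular complete multipartite graph whose energy has a clean closed form.

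Concretely, for case (c) the graph is $K_{(p-1)|Z(G)|,\dots}$ with $p+1$ equal parts. A regular complete $r$-partite graph with each part of size $s$ has eigenvalues $(r-1)s$ once, $-s$ with multiplicity $r-1$, and $0$ with multiplicity $r(s-1)$; its energy is $(r-1)s + (r-1)s = 2(r-1)s$. Substituting $r = p+1$ and $s = (p-1)|Z(G)|$ yields energy $2p\cdot(p-1)|Z(G)|$, matching the claim, and the $|G|=p^3$ specialization follows since then $|Z(G)|=p$. For cases (a) and (b) I would similarly extract the two part-sizes from the centralizer structure of $V_{8n}$ and $SD_{8n}$, form the $2$-variable quotient matrix, solve the resulting quadratic to get the two nonzero eigenvalues, and sum $|\lambda|$ over all eigenvalues (the zero eigenvalues contribute nothing, and the negative part-size eigenvalues contribute their absolute values), arriving at the stated expressions of the form $2(\text{const}) + 2\sqrt{(\cdots)(\cdots)}$.

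The only genuinely delicate point is bookkeeping the centralizer partition correctly for each specific group, since the part sizes depend on whether $n$ is odd or even (explaining the case split in (b)) and on the precise order of $Z(G)$ and of the non-central centralizers. Since the spectra are already computed in \cite{GGB}, the main work is verifying the arithmetic of summing absolute values of eigenvalues and simplifying the radicals; I expect the principal obstacle to be nothing deep but rather ensuring the multiplicities and the square-root simplifications are handled consistently across the odd/even dichotomy in $SD_{8n}$.
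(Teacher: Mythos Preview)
Your proposal is correct and follows essentially the same approach as the paper: quote the known spectra from \cite{GGB} and sum the absolute values of the eigenvalues, with your extra discussion of the complete multipartite structure serving only as background rather than as an alternative derivation. The paper's proof is even more terse---it simply cites the relevant spectrum from \cite{GGB} in each case and writes out the arithmetic of $\sum|\lambda|$---so your plan matches it in content, with the multipartite commentary being additional (and optional) motivation rather than a different method.
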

\begin{proof}
%(a) By \cite[Theorem 3.8]{GGB} we have
%\[
%\spec(\Gamma_{nc}(U_{6n})) = \left\{0^{5n-4}, (-n)^2, (n+n\sqrt{7})^1, (n- n\sqrt{7})^1\right\}. 
%\]
%Therefore,
%$E(\Gamma_{nc}(U_{6n})) = 2n + n + n\sqrt{7} - n + n\sqrt{7}$ and hence the result follows.
%(b) By \cite[Theorem 3.9]{GGB} we have
%\[
%\spec(\Gamma_{nc}(T_{4n}))= \left\{(-2)^{n-1}, 0^{3n-3}, \left((n-1)\pm \sqrt{(5n-1)(n-1)}\right)^1\right\}.
%\]
%Therefore,
%$E(\Gamma_{nc}(T_{4n})) = 2(n-1) + (n-1) + \sqrt{(5n-1)(n-1)} - (n-1) + \sqrt{(5n-1)(n-1)}$ and hence the result follows.

(a) By \cite[Theorem 3.11]{GGB} we have
\begin{equation}\label{spectrum-V8n}
\spec(\Gamma_{nc}(V_{8n}))= \left\{(-2)^{2n-1}, 0^{6n-3}, \left((2n-1) \pm \sqrt{(2n - 1)(10n - 1)}\right)^1 \right\}.
\end{equation}
Therefore,
$E(\Gamma_{nc}(V_{8n})) =  2(2n - 1) + (2n-1) + \sqrt{(2n - 1)(10n - 1)} - (2n-1) + \sqrt{(2n - 1)(10n - 1)}$ and hence the result follows.

(b) By \cite[Theorem 3.12]{GGB} we have
\begin{equation}\label{spectrum-SD8n-n-odd}
\spec(\Gamma_{nc}(SD_{8n}))= \left\{(-4)^{2n-1}, 0^{7n-5}, \left(2(n-1) \pm 2\sqrt{(5n-1)(n-1)}\right)^1 \right\}
\end{equation}
if $n$ is odd and
\begin{equation}\label{spectrum-SD8n-n-even}
\spec(\Gamma_{nc}(SD_{8n}))= \left\{(-2)^{2n-1}, 0^{6n-3}, \left((2n-1) \pm \sqrt{(2n - 1)(10n - 1)}\right)^1\right\}
\end{equation}
if $n$ is even.
%\[
%\spec(\Gamma_{nc}(G))= \begin{cases}
%\left\{(-4)^{2n-1}, 0^{7n-5}, \left(2(n-1) \pm 2\sqrt{(5n-1)(n-1)}\right)^1 \right\}, &\text{ if $n$ is odd}\\
%\left\{(-2)^{2n-1}, 0^{6n-3}, \left((2n-1) \pm \sqrt{(2n - 1)(10n - 1)}\right)^1\right\}, &\text{ if $n$ is even}.
%\end{cases} 
%\]
Therefore, $E(\Gamma_{nc}(SD_{8n})) =  4(2n-1) + 2(n-1) + 2\sqrt{(5n-1)(n-1)} - 2(n-1) + 2\sqrt{(5n-1)(n-1)}$ or $2(2n-1) + (2n-1) + \sqrt{(2n - 1)(10n - 1)} - (2n-1) + \sqrt{(2n - 1)(10n - 1)}$ according as  $n$ is odd or even.
%$E(\Gamma_{nc}(G)) =  \begin{cases}
%4(2n-1) + 2(n-1) + 2\sqrt{(5n-1)(n-1)} - 2(n-1) + 2\sqrt{(5n-1)(n-1)}, &\text{ if $m$ is odd}\\
%2(2n-1) + (2n-1) + \sqrt{(2n - 1)(10n - 1)} - (2n-1) + \sqrt{(2n - 1)(10n - 1)}, &\text{ if $m$ is even}. 
%\end{cases}
%$ 
Hence the result follows.

(c) By \cite[Theorem 3.4]{GGB} we have
\begin{equation}\label{spectrum-ZpxZp}
\spec(\Gamma_{nc}(G))= \left\{(-(p - 1)|Z(G)|)^{p}, 0^{(p + 1)((p - 1)|Z(G)| - 1)}, (p(p - 1)|Z(G)|)^1 \right\}.
\end{equation}
Therefore,
$E(\Gamma_{nc}(G)) =  p(p - 1)|Z(G)| + p(p - 1)|Z(G)|$ and hence the result follows.
\end{proof}
Consider the Frobenious group $F_{p, q} = \langle a,b : a^p = b^q = 1, b^{-1}ab = a^u\rangle$ of order $pq$, where $p$ and $q$ are primes with $q\mid(p - 1)$ and $u$ is an integer such that $\bar{u} \in {\mathbb{Z}}_p^{*}$ having order $q$. It is worth mentioning that the spectrum of non-commuting graph of $F_{p, q}$ was  computed in   \cite[Theorem 3.13]{GGB}. However, there is a typo in  \cite[Theorem 3.13]{GGB}.  The correct expression for   spectrum of non-commuting graph of $F_{p, q}$  is given  by
\[
\spec(\Gamma_{nc}(F_{p, q}))= \left\lbrace 0^{pq-p-2}, (-(q-1))^{p-1}, \left(\frac{\alpha \pm \sqrt{\alpha^2 +4p\alpha}}{2}\right)^1\right\rbrace,
\]
where $\alpha = (p-1)(q-1)$. Therefore,
\begin{equation}\label{thm11}
E(\Gamma_{nc}(F_{p, q}))= \alpha + \sqrt{\alpha^2 +4p\alpha}.
\end{equation}
 
%Again, in  \cite[Proposition 3.1]{DN}, $LE(\Gamma_{nc}(F_{p, q}))$ was computed incorrectly 
%%. However, the the expression for $LE(\Gamma_{nc}(F_{p, q}))$ given in \cite[Proposition 3.1]{DN} is not correct 
%and the correct version is given below
%% In  \cite[Proposition 3.1]{DN}, $LE(\Gamma_{nc}(F_{p, q}))$ was computed. However, the the expression for $LE(\Gamma_{nc}(F_{p, q}))$ given in \cite[Proposition 3.1]{DN} is not correct and the correct version is given below
%\begin{equation}\label{corrected-Proposition 3.1-DN}
%LE(\Gamma_{nc}(F_{p, q})) =  \frac{2p^2\alpha+2p(q-1)^2}{pq-1},
%%\frac{2p(q-1)(p(p-1)+(q-1))}{pq-1} =  
%\end{equation}
%where $\alpha = (p-1)(q-1)$.

Now we consider certain finite groups such that the spectra of their  non-commuting graphs  are not computed so far. The following lemma is very useful in this regard.
\begin{lemma}\label{lem7} 
Consider the matrix $J_{m \times n}$ and the block matrix 
\[
B_{(m + n) \times (m + n)} =  \begin{bmatrix}
               0_{m \times m}          & J_{m \times n} \\
               J_{n \times m}          & (J - I)_{n \times n} \\
              \end{bmatrix}.
\]
 Then

$\spec(J_{n \times n}) = \{0^{n-1}, n^1\}$ and $\spec(B_{(m + n) \times (m + n)}) = \left\lbrace 0^{m - 1}, (-1)^{n - 1}, \left(\frac{(n - 1) \pm \sqrt{(n - 1)^2 + 4mn}}{2}\right)^1\right\rbrace$.
\end{lemma}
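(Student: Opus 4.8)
The plan is to treat the two assertions separately. The first is immediate from \eqref{chi-Jn}: since $\chi_{J_{n \times n}}(\lambda) = \lambda^{n-1}(\lambda - n)$, the eigenvalue $0$ occurs with multiplicity $n-1$ and the eigenvalue $n$ with multiplicity $1$, giving $\spec(J_{n \times n}) = \{0^{n-1}, n^1\}$.

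For the block matrix $B_{(m+n) \times (m+n)}$, I would apply Lemma \ref{lem1} with $B_{m \times n} = J_{m \times n}$ and $A_{n \times n} = (J - I)_{n \times n}$. First compute the product $J_{n \times m} J_{m \times n} = m\, J_{n \times n}$, since each entry is a sum of $m$ ones. Lemma \ref{lem1} then yields
\[
\chi_B(\lambda) = \lambda^{m-n} \det\!\left(\lambda^2 I_{n \times n} - \lambda (J - I)_{n \times n} - m\, J_{n \times n}\right).
\]

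Next I would simplify the matrix inside the determinant. Collecting terms gives $(\lambda^2 + \lambda) I_{n \times n} - (\lambda + m) J_{n \times n}$, a matrix of the form $cI - dJ$ with $c = \lambda^2 + \lambda$ and $d = \lambda + m$. Using $\spec(J_{n \times n}) = \{0^{n-1}, n^1\}$ from the first part, the eigenvalues of $cI - dJ$ are $c$ (multiplicity $n-1$) and $c - dn$ (multiplicity $1$), so its determinant equals $c^{n-1}(c - dn)$. Substituting back, $c^{n-1} = \lambda^{n-1}(\lambda + 1)^{n-1}$ and $c - dn = \lambda^2 - (n-1)\lambda - mn$, whence
\[
\chi_B(\lambda) = \lambda^{m-1}(\lambda + 1)^{n-1}\left(\lambda^2 - (n-1)\lambda - mn\right).
\]
Reading off the roots — $0$ with multiplicity $m-1$, $-1$ with multiplicity $n-1$, and the two roots $\frac{(n-1) \pm \sqrt{(n-1)^2 + 4mn}}{2}$ of the quadratic factor — produces the claimed spectrum, and the multiplicities sum to $m+n$ as a consistency check.

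There is no genuine obstacle here; the entire computation reduces to an application of Lemma \ref{lem1} followed by the elementary fact that $cI - dJ$ shares its eigenvectors with $J$. The only place demanding care is the bookkeeping: correctly forming $J_{n \times m} J_{m \times n} = m\, J$, simplifying $-\lambda(J - I)$ so that the resulting $+\lambda I$ term merges with $\lambda^2 I$, and tracking the exponent $\lambda^{m-n} \cdot \lambda^{n-1} = \lambda^{m-1}$ coming out of Lemma \ref{lem1}.
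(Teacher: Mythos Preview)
Your proof is correct and follows essentially the same approach as the paper: both apply Lemma~\ref{lem1}, reduce to the matrix $(\lambda^2+\lambda)I_{n\times n}-(\lambda+m)J_{n\times n}$, and then use the known spectrum of $J_{n\times n}$ to compute its determinant. The only cosmetic difference is that the paper factors out $(\lambda+m)^n$ and invokes $\chi_{J_{n\times n}}\!\left(\frac{\lambda^2+\lambda}{\lambda+m}\right)$, whereas you compute the determinant of $cI-dJ$ directly as $c^{n-1}(c-dn)$; both routes land on the same factorization $\chi_B(\lambda)=\lambda^{m-1}(\lambda+1)^{n-1}(\lambda^2-(n-1)\lambda-mn)$.
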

\begin{proof}
The first part is well-known (also follows from \eqref{chi-Jn}). For the second part, by Lemma \ref{lem1}, we have
\begin{align*}
\chi_{B_{(m + n) \times (m + n)}}(\lambda)  &=  \lambda^{m-n} \det\left(\lambda^2 I_{n \times n} -\lambda(J - I)_{n \times n}- J_{n \times m} \times J_{m \times n}\right)\\
&= \lambda^{m-n} \det\left((\lambda^2 + \lambda )I_{n \times n}- (\lambda + m )J_{n \times n}\right)\\
&= \lambda^{m-n}(\lambda + m )^n \det\left(\frac{\lambda^2   + \lambda}{\lambda + m}I_{n \times n} - J_{n \times n}\right)\\
&= \lambda^{m-n}(\lambda + m )^n\chi_{J_{n \times n}}\left(\frac{\lambda^2   + \lambda}{\lambda + m}\right).
\end{align*}
Therefore, by \eqref{chi-Jn}, we have
\[
\chi_{B_{(m + n) \times (m + n)}}(\lambda) = \lambda^{m-1} (\lambda +1)^{n - 1} (\lambda^2- (n - 1)\lambda - mn).
\]
Hence, the result follows. 
%   $ \chi_{B}(\lambda )$= $\lambda^{m-n} |\lambda^2 I_{n \times n} -\lambda(J - I)_{n \times n}- J_{n \times m} \times J_{m \times n}|$\\
%                        
%            \hspace{1cm}= $\lambda^{m-n} |\lambda^2 I_{n \times n} + \lambda I_{n \times
%                        n}-\lambda J_{n \times n}- m J_{n \times n}|$\\
%                        
%             \hspace{1cm}=  $\lambda^{m-n} |(\lambda^2 + \lambda )I_{n}-
%                         (\lambda + m )J_{n}|$\\
%                      
%            \hspace{1cm}= $\lambda^{m-n}   (\lambda + m )^n   \chi_{J_{n}} \frac{(\lambda^2   + \lambda)}{(\lambda + m )}$\\
%            
%\hspace{1cm}= $\lambda^{m-1} (\lambda +1)^{(n - 1)} (\lambda^2- (n - 1)\lambda - nm)$,              
%                        
% since $\chi_{J_{n}} (\lambda)$ = $\lambda ^{n-1}  (\lambda - n )$. Hence, the result follows.    
\end{proof}

\begin{theorem}\label{thm12}
If $G$ is isomorphic to the Hanaki group 
\[
A(n,v) =  \left\{
      U(a,b) =
     \begin{bmatrix}
               1     & 0    & 0  \\
               a     & 1    & 0  \\
               b     & v(a)    & 1
       \end{bmatrix}
      : a,b \in GF(2^n) 
       \right\}, 
\]
where $n \geq 2$ and $v$ is the Frobenius automorphism of $GF(2^n)$, then 
\[
\spec(\Gamma_{nc}(G))= \left\lbrace0^{2^{2n} - 2^{n+1} +1}, (-2^n)^{2^n -2},  (2^n(2^n - 2))^1\right\rbrace
\]
 and 
\[
E(\Gamma_{nc}(A(n,v)))= 2^{n + 1}(2^{n} -2).
\] 
%\[
%E(\Gamma_{nc}(A(n,v)))= 2^n\left((2^n -3) + \sqrt{(2^n -3)^2 +4(2^n -2)}\right).
%\] 
\end{theorem}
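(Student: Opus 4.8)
The plan is to identify $\Gamma_{nc}(A(n,v))$ explicitly as a complete multipartite graph and then read off its spectrum via the Kronecker product formula of Theorem \ref{thm1}. The essential preliminary work is computing the group law. Multiplying the defining matrices and using that the Frobenius map $v$ is additive on $GF(2^n)$ (characteristic $2$) gives $U(a,b)U(c,d) = U(a+c,\, b+d+v(a)c)$. Comparing this with $U(c,d)U(a,b)$, the two products agree precisely when $v(a)c = v(c)a$, that is $a^2c = c^2 a$, equivalently $ac(a+c)=0$. Hence $U(a,b)$ and $U(c,d)$ commute if and only if $a=0$, $c=0$, or $a=c$. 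In particular the center is $Z(G)=\{U(0,b):b\in GF(2^n)\}$, so $|Z(G)|=2^n$, $|G|=2^{2n}$, and the vertex set $G\setminus Z(G)$ has $2^n(2^n-1)$ elements.

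Next I would organize the non-central vertices into blocks according to the value of $a\in GF(2^n)\setminus\{0\}$; there are $2^n-1$ such blocks, each of size $2^n$ (one vertex for each choice of $b$). By the commuting condition above, two non-central vertices $U(a,b)$ and $U(c,d)$ are adjacent (i.e. non-commuting) exactly when $a\ne c$. Thus every pair inside a single block commutes and is non-adjacent, while every pair drawn from distinct blocks is adjacent. This shows $\Gamma_{nc}(A(n,v))$ is the complete multipartite graph on $r=2^n-1$ parts each of size $s=2^n$. Ordering the vertices block by block, its adjacency matrix takes the tensor form $(J_{r\times r}-I_{r\times r})\otimes J_{s\times s}$, since the diagonal blocks vanish and each off-diagonal block equals $J_{s\times s}$.

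Finally I would apply Theorem \ref{thm1}. The eigenvalues of $J_{r\times r}-I_{r\times r}$ are $r-1$ with multiplicity $1$ and $-1$ with multiplicity $r-1$, while those of $J_{s\times s}$ are $s$ with multiplicity $1$ and $0$ with multiplicity $s-1$. Their pairwise products give $(r-1)s$ once, $-s$ with multiplicity $r-1$, and $0$ with multiplicity $r(s-1)$. Substituting $r=2^n-1$ and $s=2^n$ yields $(r-1)s=2^n(2^n-2)$ once, $-2^n$ with multiplicity $2^n-2$, and $0$ with multiplicity $(2^n-1)^2=2^{2n}-2^{n+1}+1$, which is exactly the claimed spectrum. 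Summing absolute values then gives $E(\Gamma_{nc}(A(n,v)))=2^n(2^n-2)+(2^n-2)2^n=2^{n+1}(2^n-2)$.

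The only genuinely non-routine step is the reduction of the commuting condition to the single requirement $a\ne c$, which hinges on the additivity of the Frobenius automorphism in characteristic $2$; once the complete multipartite structure is established, the remainder is the standard spectral computation for $K_{(2^n-1)\times 2^n}$.
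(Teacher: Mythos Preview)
Your proof is correct and follows essentially the same approach as the paper: identify $\Gamma_{nc}(A(n,v))$ as the complete $(2^n-1)$-partite graph with parts of size $2^n$, express its adjacency matrix as a Kronecker product with $J_{2^n\times 2^n}$, and apply Theorem~\ref{thm1}. The only differences are cosmetic: you derive the multipartite structure directly from the group law (whereas the paper cites \cite[Lemma~4.2]{Fsn-19}), and you read off the spectrum of the first factor as that of $J_{r\times r}-I_{r\times r}$ directly, while the paper routes the same computation through Lemma~\ref{lem7} with $m=1$.
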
 
\begin{proof} 
%Not that $Z(G) = \{U(0,b): b \in F\}$ and so $|Z(G)| = 2^n$ also $|G| = 2^{2n}$, then  \\
%    \begin{center}
%       $|V(\Gamma_{nc}(G))|=|G| - |Z(G)| = 2^{2n} -2^n $ 
%\end{center}.
%    
%   Let $U(a,b)$ be a non-central element of $A(n,v)$, then the centralizer of $U(a,b)$ in $A(n,v)$ is $Z(A(n,v)) \sqcup U(a,0)Z(A(n,v))$. from this centralizer we can find independent sets as following \\
%   
%    $ V_x $ = $ U(a,0) Z(A(n,v)) $,\\
%    where $x= 2^n - 1$.\\
%    
%    In other words, all vertices  have degree $ 2^n  $ and the number of vertices equal  ${2^n - 1 }$, then $\Gamma_{nc}(G)$ is a $(2^n - 1)$-partite graph, also by using Lemma \ref{lem5}, $\Gamma_c(G)$ = $(2^n - 1)K_{2^n}$, we can show that   $\Gamma_{nc}(G)$ is a $(2^n - 1)$-partite graph. 
%       \\
%     This implies that $\Gamma_{nc}(G)$ has the following adjacency matrix
By  \cite[Lemma 4.2]{Fsn-19} it follows that $\Gamma_{nc}(G)$ is the complement of $(2^n - 1)K_{2^n}$ and so it is a complete $(2^n - 1)$-partite graph.     Therefore,
\begin{align*}
\mathcal{A}(\Gamma_{nc}(G)) &=    \begin{bmatrix}
      0_{1 \times 1}                           & J_{1  \times (2^n - 2)}\\
      J_{(2^n - 2) \times 1}       & (J - I)_{(2^n - 2) \times(2^n - 2)}
      \end{bmatrix}  \otimes   J_{2^n \times 2^n} \\
&= B_{(2^n - 1) \times  (2^n - 1)}  \otimes  J_{2^n \times 2^n}.
\end{align*}
Using Lemma \ref{lem7} we have  
\[
\spec(J_{2^n \times 2^n}) = \{0^{2^n -1 }, (2^n)^1\}  \text{ and } 
\]
\begin{align*}
\spec\left(B_{(2^n - 1) \times  (2^n - 1)}\right) &= \left\lbrace(-1)^{2^n -3 }, \left(\frac{(2^n -3) \pm  \sqrt{(2^n -3)^2 +4(2^n -2)}}{2}\right)^1\right\rbrace\\
& = \left\lbrace(-1)^{2^n -2}, (2^n - 2)^1\right\rbrace.  
\end{align*}
%
%     \begin{center}
%                  $ \chi D_1 (\lambda )$ = $\lambda^{0} (\lambda +1)^{2^n - 3} (\lambda^2- (2^n - 3)\lambda - (2^n - 2) )$.  
% 	\end{center}
%     By computing the roots of above characteristic polynomial, the eigenvalues of $D_1$ can be computed  as following:
%     \begin{center}
%          $\{(-1)^{2^n -3 }, (\frac{(2^n -3) \pm  \sqrt{(2^n -3)^2 +4(2^n -2)}}{2})^1\}$.\\
%     \end{center}
%      and the eigenvalues of $J_{2^n}$ is $\{0^{2^n -1 }, (2^n)^1 \}$ 
Therefore, by Theorem \ref{thm1}, we have 
%\[
%\spec(\Gamma_{nc}(G))= \left\lbrace0^{2^{2n} - 2^{n+1} +1}, (-2^n)^{2^n -3}, \left(\frac{2^n(2^n -3) \pm 2^n\sqrt{(2^n -3)^2 +4(2^n -2)}}{2}\right)^1\right\rbrace.
%\]
\[
\spec(\Gamma_{nc}(G))= \left\lbrace 0^{2^{2n} - 2^{n+1} +1}, (-2^n)^{2^n -2},  (2^n(2^n - 2))^1\right\rbrace.
\]
Hence, $E(\Gamma_{nc}(G))  =   (2^n - 2)2^n + 2^n(2^n -2) = 2^{n+1}(2^n - 2).$  
%\[ 
%E(\Gamma_{nc}(G))  =   (2^n - 2)2^n + 2^n(2^n -2) = 2^{n+1}(2^n - 2).
%\]
%
%\begin{align*} 
%E(\Gamma_{nc}(G)) &= 2^n(2^n - 2) + 2^n(2^n - 2) = 2^{n + 1}(2^n - 2). 
%\end{align*}
%
%
%\hspace{3.2cm}$1|\frac{2^n(2^n -3) - 2^n\sqrt{(2^n -3)^2 +4(2^n -2)}}{2}|$.\\
%
%\hspace{2.8cm}= $ 2^n (2^n -3)+ \frac{2^n(2^n -3)}{2} + \frac{2^n\sqrt{(2^n -3)^2 +4(2^n -2)}}{2} -\frac{2^n(2^n -3)}{2} + \frac{2^n\sqrt{(2^n -3)^2 +4(2^n -2)}}{2} $\\
%                        
%\hspace{2.8cm}= $2^n(2^n -3) + 2(\frac{2^n\sqrt{(2^n -3)^2 +4(2^n -2)}}{2}) $\\
%                           
%\hspace{2.8cm}= $2^n(2^n -3) + 2^n\sqrt{(2^n -3)^2 +4(2^n -2)}$
%
%\hspace{2.8cm}= $2^n( (2^n -3) + \sqrt{(2^n -3)^2 +4(2^n -2)} )$.
%  
\end{proof}

\begin{theorem}\label{thm13}
If $G$ is isomorphic to the Hanaki group 
\[
A(n,p) = \left\{
     V(a,b,c) =
     \begin{bmatrix}
               1     & 0    & 0  \\
               a     & 1    & 0  \\
               b     & c    & 1
       \end{bmatrix}
      : a,b,c \in GF(p^n)  
       \right\},
\]
where $p$ is a prime, then 
%\[
%\spec(\Gamma_{nc}(A(n,p)))= \left\lbrace0^{p^{3n} - 2p^{n} -1}, (-(p^{2n} - p^{n}))^{p^n -1}, \left(\frac{(p^{2n} - p^{n})\left((p^{n}-1) \pm \sqrt{(p^{n}-1)^2 +4p^{n})}\right)}{2}\right)^1\right\rbrace
%\] 
\[
\spec(\Gamma_{nc}(A(n,p)))= \left\lbrace0^{p^{3n} - 2p^{n} -1}, (-(p^{2n} - p^{n}))^{p^n}, \left( p^n(p^{2n} - p^{n}) \right)^1\right\rbrace
\]
and
\[
E(\Gamma_{nc}(A(n,p)))= 2p^{2n}(p^{n} - 1).
\]
%\[
%E(\Gamma_{nc}(A(n,p)))= (p^{2n} - p^{n})\left((p^{n}-1) + {\sqrt{(p^{n}-1)^2 +4p^{n})}}\right).
%\]
\end{theorem}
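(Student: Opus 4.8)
The plan is to mimic exactly the structure used in the proof of Theorem~\ref{thm12}, since the two Hanaki families are structurally parallel; only the partition sizes differ. First I would identify the non-commuting graph of $A(n,p)$ as a complete multipartite graph by invoking the relevant structural result (the analogue of \cite[Lemma 4.2]{Fsn-19} used in Theorem~\ref{thm12}). For $A(n,p)$ one has $|G| = p^{3n}$ and $|Z(G)| = p^n$, so $|G \setminus Z(G)| = p^{3n} - p^n$. The centralizers of non-central elements all have order $p^{2n}$, and the non-central part of each centralizer (a coset-type block commuting internally) has size $p^{2n} - p^n = p^n(p^n - 1)$. The number of such blocks is $(p^{3n} - p^n)/(p^{2n} - p^n) = (p^n+1)$. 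Thus I expect $\Gamma_{nc}(A(n,p))$ to be the complement of $(p^n + 1)K_{p^{2n} - p^n}$, i.e.\ a complete $(p^n+1)$-partite graph with each part of size $p^n(p^n-1)$.

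Next I would write the adjacency matrix as a tensor (Kronecker) product, just as in Theorem~\ref{thm12}. Writing $m = p^n + 1$ for the number of parts and $s = p^n(p^n - 1)$ for the common part size, the complete multipartite structure gives
\[
\mathcal{A}(\Gamma_{nc}(A(n,p))) = (J - I)_{m \times m} \otimes J_{s \times s},
\]
or equivalently the block form $B_{m \times m} \otimes J_{s \times s}$ with the block $B$ chosen as in Lemma~\ref{lem7}. (In fact, since all parts have equal size here, the cleaner description is $(J_{m\times m}-I_{m\times m})\otimes J_{s\times s}$, whose spectrum is immediate from the known spectra of $J-I$ and $J$.) Then I would apply Lemma~\ref{lem7} (or \eqref{chi-Jn} directly) to get $\spec(J_{s \times s}) = \{0^{s-1}, s^1\}$ and $\spec(J_{m\times m} - I_{m\times m}) = \{(-1)^{m-1}, (m-1)^1\}$, and combine them via Theorem~\ref{thm1} to obtain the eigenvalues as all products $\lambda_i \mu_j$.

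The combinatorial bookkeeping is the only real step, and I expect it to be routine. Multiplying the two spectra: the eigenvalue $0$ of $J_{s\times s}$ (multiplicity $s-1$) kills every eigenvalue of the first factor, contributing $0$ with multiplicity $m(s-1)$; the eigenvalue $s$ of $J_{s\times s}$ pairs with $-1$ (multiplicity $m-1$) to give $-s$ with multiplicity $m-1$, and with $m-1$ (multiplicity $1$) to give $(m-1)s$ once. Substituting $m - 1 = p^n$ and $s = p^{2n} - p^n$ yields eigenvalue $-(p^{2n} - p^n)$ with multiplicity $p^n$, eigenvalue $(p^{2n} - p^n)p^n = p^n(p^{2n} - p^n)$ once, and $0$ with the remaining multiplicity; a quick count $m(s-1) = (p^n+1)(p^{2n}-p^n-1) = p^{3n} - 2p^n - 1$ matches the stated multiplicity of $0$. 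This gives the claimed spectrum. Finally, the energy is $E = p^n \cdot (p^{2n} - p^n) + p^n(p^{2n} - p^n) = 2p^n(p^{2n} - p^n) = 2p^{2n}(p^n - 1)$, as asserted.

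The main obstacle, such as it is, lies in step one: correctly computing $|Z(G)|$, the common centralizer order, and hence the number and size of the parts of the complete multipartite graph. Everything downstream is forced once the partition $(p^n + 1)K_{p^{2n} - p^n}$ is pinned down, so I would take care to verify the group-theoretic parameters of $A(n,p)$ (center, conjugacy/commuting structure) against the cited lemma before proceeding to the purely linear-algebraic computation.
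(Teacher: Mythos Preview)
Your proposal is correct and follows essentially the same route as the paper: identify $\Gamma_{nc}(A(n,p))$ as the complement of $(p^n+1)K_{p^{2n}-p^n}$ via \cite[Lemma 4.2]{Fsn-19}, write its adjacency matrix as a Kronecker product with $J_{s\times s}$, and combine spectra using Theorem~\ref{thm1}. The paper packages the first factor as $B_{(p^n+1)\times(p^n+1)}$ from Lemma~\ref{lem7} (with the $1\times 1$ zero block), but since all parts have equal size this $B$ is literally $(J-I)_{(p^n+1)\times(p^n+1)}$, so your direct use of $\spec(J-I)=\{(-1)^{m-1},(m-1)^1\}$ is the same computation stripped of one layer of notation.
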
 
\begin{proof}
By \cite[Lemma 4.2]{Fsn-19} it follows that $\Gamma_{nc}(G)$ is the complement of $(p^n + 1)K_{p^{2n} - p^n}$ and so it is a complete $(p^n + 1)$-partite graph.     Therefore,    
\begin{align*}
\mathcal{A}(\Gamma_{nc}(G)) &= \begin{bmatrix}
      0_{1\times 1}                           & J_{1  \times p^n }\\
      J_{p^n \times 1}                & (J - I)_{p^n \times p^n}
      \end{bmatrix}  \otimes   J_{(p^{2n} - p^n) \times (p^{2n} - p^n)} \\
&= B_{(p^n + 1) \times (p^n + 1)}  \otimes  J_{(p^{2n} - p^n) \times (p^{2n} - p^n)}.
\end{align*}
Using Lemma \ref{lem7} we have 
\[
\spec\left(J_{(p^{2n} - p^n) \times (p^{2n} - p^n)}\right) =  \{0^{p^{2n} - p^n -1 }, ((p^{2n} - p^n))^1\} \text{ and}
\] 
\begin{align*}
\spec\left(B_{(p^n + 1) \times (p^n + 1)}\right) &= \left\lbrace (-1)^{p^n -1}, \left(\frac{(p^n -1) \pm \sqrt{(p^n -1)^2 +4p^n }}{2}\right)^1\right\rbrace\\
&= \left\lbrace (-1)^{p^n}, (p^n)^1 \right\rbrace.
\end{align*}
Therefore, by Theorem \ref{thm1},  we have
\[
\spec(\Gamma_{nc}(A(n,p)))= \left\lbrace 0^{p^{3n} - 2p^{n} -1}, (-(p^{2n} - p^{n}))^{p^n}, \left( p^n(p^{2n} - p^{n}) \right)^1\right\rbrace.
\]
Hence, $E(\Gamma_{nc}(A(n,p)))= p^n(p^{2n} - p^{n}) + p^n(p^{2n} - p^{n})  = 2p^{2n}(p^{n} - 1).$
%\[
%E(\Gamma_{nc}(A(n,p)))= p^n(p^{2n} - p^{n}) + p^n(p^{2n} - p^{n})  = 2p^{2n}(p^{n} - 1).
%\]
\end{proof}

\begin{theorem}\label{thm10}
Let $G$ be a finite group with center $Z(G)$. If $\frac{G}{Z(G)} \cong D_{2m}$, where $D_{2m} = \langle a, b: a^m = b^2 =1, bab^{-1}=a^{-1}\rangle$, then 
\[
\spec(\Gamma_{nc}(G))= \left\lbrace 0^{2nm-n-m-1}, (-n)^{m-1}, \left(\frac{n(m-1) \pm n\sqrt{(m-1)(5m-1)}}{2}\right)^1\right\rbrace
\]
and 
\[
E(\Gamma_{nc}(G))= n\left((m-1) + \sqrt{(m-1)(5m-1)}\right),
\]
where $n = |Z(G)|$.
\end{theorem}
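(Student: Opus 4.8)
The plan is to mirror the strategy used for the Hanaki groups in Theorems \ref{thm12} and \ref{thm13}: first pin down the isomorphism type of $\Gamma_{nc}(G)$ as a complete multipartite graph, then realise its adjacency matrix as a Kronecker product $B \otimes J_{n \times n}$ with $B$ one of the block matrices handled by Lemma \ref{lem7}, and finally read off the spectrum via Theorem \ref{thm1} and sum absolute values to get the energy.

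First I would set up the combinatorial structure. Writing $n = |Z(G)|$, we have $|G| = 2mn$ and $|G \setminus Z(G)| = n(2m - 1)$. Let $A \le G$ be the preimage of the rotation subgroup $\langle a \rangle \le D_{2m}$ under the quotient map $G \to G/Z(G) \cong D_{2m}$. Then $A$ has index $2$ in $G$ and $A/Z(G)$ is cyclic, so $A/Z(A)$ is cyclic and hence $A$ is abelian; thus the $n(m-1)$ elements of $A \setminus Z(G)$ pairwise commute and form an independent set in $\Gamma_{nc}(G)$. The remaining $mn$ vertices are the union of the $m$ cosets of $Z(G)$ lying over the $m$ reflections of $D_{2m}$, and each such coset is again an independent set of size $n$, since its elements differ by central factors. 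The crucial claim, and the step I expect to be the main obstacle, is that any two vertices drawn from different parts are adjacent: an element of $A \setminus Z(G)$ never commutes with a reflection-type element, and elements lying over two distinct reflections never commute. For rotation images other than the central involution this is immediate from the multiplication rules in $D_{2m}$; the delicate case occurs when $m$ is even, where two distinct reflections of $D_{2m}$ may commute (e.g.\ $G = D_{16}$, $G/Z(G) \cong D_8$), so one must check that the corresponding commutator, which a priori only lies in $Z(G)$, is in fact nontrivial. Granting this, $\Gamma_{nc}(G)$ is the complete $(m+1)$-partite graph with one part of size $n(m-1)$ and $m$ parts of size $n$.

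Next I would encode this structure algebraically. Ordering the parts with the big one first, the graph is the blow-up of $\Gamma_{nc}(D_{2m}) = K_{m-1,1,\dots,1}$ obtained by replacing each vertex with an independent set of size $n$, so that
\[
\mathcal{A}(\Gamma_{nc}(G)) = B_{(2m-1)\times(2m-1)} \otimes J_{n\times n}, \qquad
B = \begin{bmatrix} 0_{(m-1)\times(m-1)} & J_{(m-1)\times m} \\ J_{m\times(m-1)} & (J-I)_{m\times m} \end{bmatrix}.
\]
This $B$ is exactly the matrix of Lemma \ref{lem7} with block sizes $m-1$ and $m$, so that lemma gives $\spec(B) = \{0^{m-2}, (-1)^{m-1}, (\frac{(m-1)\pm\sqrt{(m-1)(5m-1)}}{2})^1\}$, using $(m-1)^2 + 4(m-1)m = (m-1)(5m-1)$. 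Since $\spec(J_{n\times n}) = \{0^{n-1}, n^1\}$ by \eqref{chi-Jn}, Theorem \ref{thm1} multiplies these eigenvalues pairwise: each eigenvalue $\mu$ of $B$ produces $n\mu$ once and $0$ with multiplicity $n-1$. Collecting terms yields $-n$ with multiplicity $m-1$, the pair $\frac{n(m-1)\pm n\sqrt{(m-1)(5m-1)}}{2}$ each once, and $0$ with the remaining multiplicity $(m-2)n + (m-1)(n-1) + 2(n-1) = 2mn - n - m - 1$, which is the claimed spectrum.

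Finally, the energy is the sum of absolute values of the eigenvalues. The zero eigenvalues contribute nothing and the $m-1$ copies of $-n$ contribute $n(m-1)$. The two remaining eigenvalues have product $-n^2 m(m-1) < 0$, hence opposite signs, so their absolute values sum to their difference $n\sqrt{(m-1)(5m-1)}$. Adding these gives $E(\Gamma_{nc}(G)) = n\big((m-1) + \sqrt{(m-1)(5m-1)}\big)$, as asserted.
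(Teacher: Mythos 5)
Your proposal is correct and follows essentially the same route as the paper: identify $\Gamma_{nc}(G)$ as the complete $(m+1)$-partite graph $K_{(m-1)n,\,n,\dots,n}$, write $\mathcal{A}(\Gamma_{nc}(G))=B_{(2m-1)\times(2m-1)}\otimes J_{n\times n}$, and apply Lemma \ref{lem7} and Theorem \ref{thm1}. The one step you leave "granted" (that preimages of two distinct commuting reflections, possible when $m$ is even, are still adjacent) is exactly what the paper outsources to \cite[Lemma 3.1]{Fsn-19}, and it closes quickly: if such $x,y$ commuted, then $xy$ would be a non-central element of the abelian preimage of $\langle a\rangle$ commuting with the reflection-type element $x$, contradicting the case you already settled.
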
 
\begin{proof}
% Similar to the proof of Theorem \ref{thm3}, we know that $|Z(G)|= n$ also $|V(\Gamma_{nc}(G))|=|G| - |Z(G)| = 2mn -n $.\\
Let $\frac{G}{Z(G)} = \langle xZ(G), yZ(G) : x^2Z(G) = y^mZ(G) = Z(G), xyx^{-1}Z(G) = y^{-1}Z(G)\rangle$.
It can be seen that the  independent sets of $\Gamma_{nc}(G)$  are given by
$V_0 := yZ(G)\sqcup \cdots \sqcup y^{m-1}Z(G)$, 
$V_1 := xy^1Z(G)$,
$V_2 := xy^2Z(G)$, $\dots$, 
$V_m := xy^mZ(G)$.
By \cite[Lemma 3.1]{Fsn-19}, it follows that $\Gamma_{nc}(G)$ is the complement of $K_{(m-1)|Z(G)|} \sqcup mK_{|Z(G)|}$ and so it is a complete $(m + 1)$-partite graph. Hence,    
%Therefore, by Lemma \ref{lem3},  it follows that $\Gamma_{nc}(G)$ is a complete $(m + 1)$-partite graph. Hence,    
\begin{align*}
\mathcal{A}(\Gamma_{nc}(G)) &= \begin{bmatrix}
      0_{(mn-n) \times (mn-n)}  & J_{(mn-n) \times n}  & J_{(mn-n) \times n}  & J_{(mn-n) \times n}  & J_{(mn-n) \times n}  & J_{(mn-n) \times n} \\
      J_{n \times (mn-n)}  & 0_{n \times n}  & J_{n \times n}  & J_{n \times n}  & J_{n \times n}  & J_{n \times n}\\
      J_{n \times (mn-n)}  & J_{n \times n}  & 0_{n \times n}  & J_{n \times n}  & J_{n \times n}  & J_{n \times n}\\
       \vdots\\
      J_{n \times (mn-n)}  & J_{n \times n}  & J_{n \times n}  & J_{n \times n}  & J_{n \times n}  & 0_{n \times n}\\
        \end{bmatrix}\\
&= \begin{bmatrix}
      0_{(m-1) \times (m-1)}                & J_{(m-1) \times m}\\
      J_{m \times (m-1)}       & (J - I)_{m \times m}
      \end{bmatrix} \otimes   J_{n \times n}\\
&= B_{(2m - 1) \times (2m - 1)}\otimes   J_{n \times n},      
\end{align*}
where $n = |Z(G)|$. By Lemma \ref{lem7} we have
\begin{align*}
\spec(J_{|Z(G)| \times |Z(G)|}) &= \{0^{|Z(G)|-1}, (|Z(G)|)^1\}  \text{ and} \\\spec(B_{(2m - 1) \times (2m - 1)}) &= \left\lbrace0^{m-2}, (-1)^{m-1}, \left({(m-1)} \pm  {\frac{\sqrt{(m-1)(5m-1)}}{2}}\right)^1\right\rbrace.
\end{align*}  
Therefore, by using Theorem \ref{thm1}, we have    
\[
\spec(\Gamma_{nc}(G))= \left\lbrace 0^{2nm-n-m-1}, (-n)^{m-1}, \left(\frac{n(m-1) \pm n\sqrt{(m-1)(5m-1)}}{2}\right)^1\right\rbrace.
\] 
Hence, 
\begin{align*}
E(\Gamma_{nc}(G)) & = (m-1)n + \frac{n(m-1) +  n\sqrt{(m-1)(5m-1)}}{2}\\
& ~~~~~~~~~~~~~~~~ + \frac{-n(m-1) +  n\sqrt{(m-1)^2 +4m(m-1)}}{2} \\
& = n\left((m-1) + \sqrt{(m-1)(5m-1)}\right). 
\end{align*}
\end{proof}
We have the following corollary to Theorem \ref{thm10}.
\begin{corollary}\label{cor-M2rs}
Consider the group $M_{2rs} = \langle a, b : a^r = b^{2s} = 1, bab^{-1} = a^{-1} \rangle$.  
\begin{enumerate}
\item If $r$ is odd then
\[
\spec(\Gamma_{nc}(M_{2rs}))= \left\lbrace 0^{2sr-s-r-1}, (-s)^{r-1}, \left(\frac{s(r-1) \pm s\sqrt{(r-1)(5r-1)}}{2}\right)^1\right\rbrace
\]
and 
\[
E(\Gamma_{nc}(M_{2rs}))= s\left((r-1) + \sqrt{(r-1)(5r-1)}\right).
\]
\item If $r$ is even then
\[
\spec(\Gamma_{nc}(M_{2rs}))= \left\lbrace 0^{2sr-2s-\frac{r}{2}-1}, (-2s)^{\frac{r}{2}-1}, \left(s\left(\frac{r}{2} - 1\right) \pm s\sqrt{\left(\frac{r}{2} - 1\right)\left(\frac{5r}{2} - 1\right)}\right)^1\right\rbrace
\]
%\[
%\spec(\Gamma_{nc}(M_{2rs}))= \left\lbrace 0^{2sr-2s-\frac{r}{2}-1}, (-2s)^{\frac{r}{2}-1}, \left(\frac{s(r-2) \pm s\sqrt{(r -2)(5r-2)}}{2}\right)^1\right\rbrace
%\]
and 
\[
E(\Gamma_{nc}(M_{2rs}))= s\left((r-2) + \sqrt{(r-2)(5r-2)}\right).
\]
\end{enumerate}
\end{corollary}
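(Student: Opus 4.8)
The plan is to recognize $M_{2rs}$ as a group whose central quotient is dihedral, so that Theorem~\ref{thm10} applies directly; the whole task then reduces to computing $Z(M_{2rs})$, identifying the quotient $M_{2rs}/Z(M_{2rs})$, and substituting the resulting parameters. First I would fix the normal form $a^i b^j$ with $0 \le i < r$ and $0 \le j < 2s$, which is available because $M_{2rs} = \langle a\rangle \rtimes \langle b\rangle$ with $\langle a\rangle \cong \mathbb{Z}_r$, $\langle b\rangle \cong \mathbb{Z}_{2s}$ and $\langle a\rangle \cap \langle b\rangle = \{1\}$. Using $b^j a b^{-j} = a^{(-1)^j}$, the element $a^i b^j$ commutes with $a$ precisely when $(-1)^j \equiv 1 \pmod r$ and commutes with $b$ precisely when $2i \equiv 0 \pmod r$. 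For $r \ge 3$ the first condition forces $j$ to be even.

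Then I would split on the parity of $r$. If $r$ is odd, the condition $2i \equiv 0 \pmod r$ forces $i \equiv 0$, so $Z(M_{2rs}) = \langle b^2\rangle$ has order $s$; if $r$ is even it allows $i \in \{0, r/2\}$, so $Z(M_{2rs}) = \langle b^2\rangle \cup a^{r/2}\langle b^2\rangle$ has order $2s$. In either case I would check that the images $\bar a, \bar b$ satisfy the dihedral presentation: $\bar b$ has order $2$ (since $b^2$ is central but $b$ is not), $\bar b \bar a \bar b^{-1} = \bar a^{-1}$, and the order of $\bar a$ equals $r$ when $r$ is odd and $r/2$ when $r$ is even (the latter because $a^{r/2}$ has become central). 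Hence $M_{2rs}/Z(M_{2rs}) \cong D_{2m}$ with $m = r$, $n = |Z| = s$ in the odd case and with $m = r/2$, $n = |Z| = 2s$ in the even case.

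With these parameters in hand the result is immediate from Theorem~\ref{thm10}. For $r$ odd I would substitute $m = r$, $n = s$ to recover part~(a) verbatim. For $r$ even I would substitute $m = r/2$, $n = 2s$; the $0$- and $(-n)$-eigenvalue multiplicities become $2sr - 2s - \tfrac{r}{2} - 1$ and $(-2s)^{r/2 - 1}$, while the factor $2s$ can be absorbed into the surds via $2s\sqrt{(\tfrac{r}{2} - 1)(\tfrac{5r}{2} - 1)} = s\sqrt{(r-2)(5r-2)}$, giving the stated spectrum together with the energy $E(\Gamma_{nc}(M_{2rs})) = s\big((r-2) + \sqrt{(r-2)(5r-2)}\big)$.

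The only genuine obstacle is the center computation and the verification that the quotient is dihedral with the correct parameter; in particular one must not overlook that when $r$ is even the element $a^{r/2}$ is central, which both doubles $|Z(M_{2rs})|$ and halves the dihedral parameter $m$. Everything after that is the routine substitution and simplification indicated above.
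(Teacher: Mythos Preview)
Your proposal is correct and follows exactly the paper's approach: apply Theorem~\ref{thm10} with $m=r$, $n=s$ when $r$ is odd and $m=r/2$, $n=2s$ when $r$ is even. The paper's proof is a single sentence stating these substitutions without justification, whereas you have supplied the supporting center computation and quotient identification that the paper leaves implicit.
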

\begin{proof}
The result follows from Theorem \ref{thm10} by putting $m=r, n = s$ if $r$ is odd and $m=\frac{r}{2}, n = 2s$ if $r$ is even.
\end{proof}
%Note that part (c) of Theorem \ref{thm3} also follows from Corollary \ref{cor-M2rs}.

Putting $r =m$ and $s = 1$ in Corollary \ref{cor-M2rs} we get  
\begin{corollary} 
\begin{enumerate}
\item If $m$ is odd then
\[
\spec(\Gamma_{nc}(D_{2m}))= \left\lbrace 0^{m-2}, (-1)^{m-1}, \left(\frac{(m-1) \pm \sqrt{(m-1)(5m-1)}}{2}\right)^1\right\rbrace
\]
and 
\[
E(\Gamma_{nc}(D_{2m}))= \left((m-1) + \sqrt{(m-1)(5m-1)}\right).
\]
\item If $m$ is even then
\[
\spec(\Gamma_{nc}(D_{2m}))= \left\lbrace 0^{\frac{3m}{2}-3}, (-2)^{\frac{m}{2}-1}, \left(\left(\frac{m}{2} - 1\right) \pm \sqrt{\left(\frac{m}{2} - 1\right)\left(\frac{5m}{2} - 1\right)}\right)^1\right\rbrace
\]
and 
\[
E(\Gamma_{nc}(D_{2m}))= \left((m-2) + \sqrt{(m-2)(5m-2)}\right).
\]
\end{enumerate}
%\[
%E(\Gamma_{nc}(D_{2m})) = \begin{cases}
%(m-1) + \sqrt{(m-1)(5m-1)}, &\text{ if $m$ is odd}\\
%(m - 2) + \sqrt{(m - 2)(5m - 2)}, &\text{ if $m$ is even}.
%\end{cases}
%\]
\end{corollary}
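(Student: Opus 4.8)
The plan is to derive this statement as a direct specialization of Corollary \ref{cor-M2rs}, since the dihedral group is the $s=1$ member of the $M_{2rs}$ family: putting $s=1$ in $M_{2rs} = \langle a, b : a^r = b^{2s} = 1, bab^{-1} = a^{-1}\rangle$ gives $M_{2r} = \langle a, b : a^r = b^2 = 1, bab^{-1} = a^{-1}\rangle$, which is exactly $D_{2r}$. Hence I would simply set $r = m$ and $s = 1$ in Corollary \ref{cor-M2rs} and simplify the resulting expressions; no new spectral computation is required. (Equivalently, one could run the argument of Theorem \ref{thm10} directly on $D_{2m}$ after computing its center, which is the route that secretly produces the even/odd dichotomy — see the last paragraph.)

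First I would treat the case $m$ odd. Substituting $s=1$ and $r=m$ into the odd branch of Corollary \ref{cor-M2rs}, the zero-eigenvalue multiplicity $2sr - s - r - 1$ collapses to $2m - 1 - m - 1 = m - 2$; the eigenvalue $-s$ becomes $-1$ with multiplicity $r - 1 = m - 1$; and the two isolated eigenvalues become $\tfrac{(m-1) \pm \sqrt{(m-1)(5m-1)}}{2}$. The energy $s\big((r-1) + \sqrt{(r-1)(5r-1)}\big)$ then reduces verbatim to $(m-1) + \sqrt{(m-1)(5m-1)}$, which is the asserted value.

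Next I would treat the case $m$ even. Substituting $s=1$ and $r=m$ into the even branch, the multiplicity $2sr - 2s - \tfrac{r}{2} - 1$ becomes $2m - 2 - \tfrac{m}{2} - 1 = \tfrac{3m}{2} - 3$; the eigenvalue $-2s$ becomes $-2$ with multiplicity $\tfrac{r}{2} - 1 = \tfrac{m}{2} - 1$; and the isolated eigenvalues become $\big(\tfrac{m}{2} - 1\big) \pm \sqrt{\big(\tfrac{m}{2} - 1\big)\big(\tfrac{5m}{2} - 1\big)}$. The energy $s\big((r-2) + \sqrt{(r-2)(5r-2)}\big)$ reduces to $(m-2) + \sqrt{(m-2)(5m-2)}$, matching the claim.

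There is no genuine obstacle here beyond bookkeeping; the whole argument is arithmetic simplification once the identification $D_{2m} = M_{2m}$ is noted. The only conceptual point worth a moment's care — and the reason the statement bifurcates at all — is the structure of $Z(D_{2m})$: it is trivial when $m$ is odd (so $|Z|=1$ and $\frac{D_{2m}}{Z(D_{2m})} \cong D_{2m}$), whereas for $m$ even it has order two, generated by $a^{m/2}$, giving $\frac{D_{2m}}{Z(D_{2m})} \cong D_m$. This is precisely the distinction that Corollary \ref{cor-M2rs} already encodes through its two assignments of the Theorem \ref{thm10} parameters, so before declaring the simplification complete I would confirm that the specialization $s=1$, $r=m$ lands in the correct branch in each parity.
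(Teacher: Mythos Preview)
Your proposal is correct and follows essentially the same approach as the paper: the paper's proof is the single line ``Putting $r = m$ and $s = 1$ in Corollary \ref{cor-M2rs} we get'' the result, and you carry out precisely this substitution with the arithmetic spelled out. Your additional remarks about $Z(D_{2m})$ and the origin of the parity dichotomy are not needed for the corollary itself but are consistent with how Corollary \ref{cor-M2rs} was derived from Theorem \ref{thm10}.
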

It is worth mentioning that the energies of  non-commuting graphs of dihedral groups ($D_{2m}$) were also obtained in \cite{Mse2017}  by direct calculation.

Putting $n = 2$ in Theorem \ref{thm10} we  get the following corollary.
\begin{corollary}
If $Q_{4m} = \langle a, b : b^{2m} = 1, a^2 = b^m, aba^{-1} = b^{-1} \rangle$, where $m \geq 2$, then
\[
\spec(\Gamma_{nc}(Q_{4m}))= \left\lbrace 0^{3m-3}, (-2)^{m-1}, \left((m-1) \pm \sqrt{(m-1)(5m-1)}\right)^1\right\rbrace
\]
and 
\[
E(\Gamma_{nc}(Q_{4m}))= 2\left((m-1) + \sqrt{(m-1)(5m-1)}\right),
\]
\end{corollary}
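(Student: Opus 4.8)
The plan is to recognize this corollary as the specialization $n = 2$ of Theorem \ref{thm10}, so that the whole task reduces to checking that $Q_{4m}$ satisfies the hypothesis of that theorem with $|Z(G)| = 2$; once this is done, the spectrum and energy are read off by substituting $n = 2$ into the formulas already proved there.

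First I would determine the center. Since $aba^{-1} = b^{-1}$, conjugation by $a$ sends $b^k$ to $b^{-k}$, so a power $b^k$ is central if and only if $b^{2k} = 1$, i.e. if and only if $m \mid k$; moreover a short check using $ba = ab^{-1}$ shows that no element of the coset $a\langle b\rangle$ is central when $m \geq 2$. Hence $Z(Q_{4m}) = \{1, b^m\}$ has order $2$, and I set $n := |Z(Q_{4m})| = 2$.

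Next I would identify the quotient $Q_{4m}/Z(Q_{4m})$. Writing $\bar{x}$ for the image of $x$, the relation $a^2 = b^m$ gives $\bar{a}^2 = 1$, while $b^m \in Z$ forces $\bar{b}^m = 1$; here $\bar{b}$ has order \emph{exactly} $m$, since in the range $0 \le k < 2m$ one has $b^k \in \{1, b^m\}$ only for $k \in \{0, m\}$. Together with $\bar{a}\bar{b}\bar{a}^{-1} = \bar{b}^{-1}$, these are precisely the defining relations of $D_{2m}$, and because $|Q_{4m}/Z(Q_{4m})| = 4m/2 = 2m = |D_{2m}|$, the images $\bar{a}, \bar{b}$ exhaust the quotient. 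Thus $Q_{4m}/Z(Q_{4m}) \cong D_{2m}$, and the hypothesis of Theorem \ref{thm10} is met.

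With both points confirmed I would invoke Theorem \ref{thm10} and set $n = 2$: the exponent $2nm - n - m - 1$ collapses to $3m - 3$, the eigenvalue $-n = -2$ keeps multiplicity $m - 1$, and the factor $n/2 = 1$ reduces the two remaining eigenvalues to $(m-1) \pm \sqrt{(m-1)(5m-1)}$, while the energy $n\bigl((m-1) + \sqrt{(m-1)(5m-1)}\bigr)$ becomes $2\bigl((m-1) + \sqrt{(m-1)(5m-1)}\bigr)$. The only genuine content beyond this substitution lies in the two group-theoretic identifications above, and the single mild subtlety I anticipate is verifying that $\bar{b}$ has order exactly $m$ rather than a proper divisor; everything downstream is the arithmetic already carried out in the proof of Theorem \ref{thm10}.
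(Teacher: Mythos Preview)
Your proposal is correct and follows exactly the paper's approach: the paper simply states ``Putting $n = 2$ in Theorem \ref{thm10} we get the following corollary,'' whereas you additionally supply the verification that $Z(Q_{4m}) = \{1, b^m\}$ and $Q_{4m}/Z(Q_{4m}) \cong D_{2m}$, which the paper takes for granted.
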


Putting $m = 3$ in Theorem \ref{thm10} we also get the following corollary.
\begin{corollary}
If $U_{6n} = \langle a, b : a^{2n} =1, b^3 = 1, a^{-1}ba = b^{-1}\rangle $ then  
\[
\spec(\Gamma_{nc}(U_{6n})) = \left\{0^{5n-4}, (-n)^2, (n+n\sqrt{7})^1, (n- n\sqrt{7})^1\right\}. 
\]
and
$E(\Gamma_{nc}(U_{6n})) = 2n(1+ \sqrt{7})$.
\end{corollary}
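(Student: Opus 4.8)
The plan is to derive this corollary directly from Theorem \ref{thm10} by substituting the appropriate parameters, exactly as the preceding corollaries do. The group $U_{6n}$ has order $6n$, and I first need to identify its center and the quotient $\frac{G}{Z(G)}$. From the relations $a^{2n}=1$, $b^3=1$, and $a^{-1}ba=b^{-1}$, one checks that $a^2$ commutes with $b$ (since $a^{-2}ba^2 = a^{-1}b^{-1}a = (a^{-1}ba)^{-1} = b$), so $\langle a^2\rangle$ lies in the center; a short computation shows $Z(U_{6n}) = \langle a^2\rangle$ has order $n$, giving $|Z(G)| = n$. Consequently $\left|\frac{G}{Z(G)}\right| = 6$, and the induced relations show $\frac{G}{Z(G)} \cong D_6 = D_{2\cdot 3}$, so we are in the situation of Theorem \ref{thm10} with $m = 3$ and $n = |Z(G)|$.

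Next I would simply substitute $m = 3$ into the spectral formula of Theorem \ref{thm10}. The multiplicity of the zero eigenvalue becomes $2nm - n - m - 1 = 6n - n - 3 - 1 = 5n - 4$; the eigenvalue $-n$ acquires multiplicity $m - 1 = 2$; and the two remaining eigenvalues become
\[
\frac{n(m-1) \pm n\sqrt{(m-1)(5m-1)}}{2} = \frac{2n \pm n\sqrt{2\cdot 14}}{2} = \frac{2n \pm 2n\sqrt{7}}{2} = n \pm n\sqrt{7},
\]
which matches the claimed $\spec(\Gamma_{nc}(U_{6n}))$. For the energy, substituting $m = 3$ into $E(\Gamma_{nc}(G)) = n\left((m-1) + \sqrt{(m-1)(5m-1)}\right)$ yields $n\left(2 + \sqrt{2\cdot 14}\right) = n(2 + 2\sqrt{7}) = 2n(1 + \sqrt{7})$, as required.

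The only genuine content beyond arithmetic substitution is verifying that $U_{6n}$ actually satisfies the hypothesis $\frac{G}{Z(G)} \cong D_{2m}$ with $m = 3$, and this is where I expect the main (though modest) obstacle to lie: one must correctly compute $Z(U_{6n})$ and confirm that the quotient presentation reduces to the dihedral presentation $\langle x, y : x^2 = y^3 = 1,\ xyx^{-1} = y^{-1}\rangle$ under the images $x = aZ(G)$, $y = bZ(G)$. Once that structural identification is in place, everything else is a mechanical plug-in of $m = 3$, $n = |Z(G)|$ into the already-proven formulas, so the corollary follows immediately from Theorem \ref{thm10}.
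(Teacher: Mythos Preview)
Your proposal is correct and follows exactly the paper's approach: the paper simply states ``Putting $m = 3$ in Theorem \ref{thm10} we also get the following corollary,'' and your argument carries out precisely that substitution, with the added (and helpful) verification that $Z(U_{6n}) = \langle a^2\rangle$ has order $n$ so that $U_{6n}/Z(U_{6n}) \cong D_6$.
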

%\begin{theorem} \label{thm6} 
%The detour energy of NC-graph of group $D_{2m}$, where  $m \geq 3$, is given by 
%\[
%E_D(\Gamma_{nc}(D_{2m})) = \begin{cases}
%2(2n-2)^{2}, &\text{ if $m$ is odd}\\
%2(2n-3)^{2}, &\text{ if $m$ is even}.
%\end{cases}
%\]
%\end{theorem}
%    
%  \hspace{0.5cm}(a) If $m$ is odd and :\\
% 
%  \begin{center}
%      $E_D(\Gamma_{nc}(D_{2m}))$ = $2(2n-2)^{2}$. \\
%  \end{center}
%  
%  \hspace{0.5cm}(b) If $m$ is even and $m \geq 3 $:\\
%  \begin{center}
%      $E_D(\Gamma_{nc}(D_{2m}))$ = $2(2n-3)^{2}$. \\
%  \end{center}
%  
% \begin{proof} 
%  The proof follows from Theorem \ref{thm2} part (3).
% \end{proof} 
We conclude this section by noting that the spectrum and energy of  non-commuting graph of $U_{6n}$ are also obtained in \cite[Theorem 3.8]{GGB} by direct calculation.

\section{Perfect squares and integral non-commuting graphs}
%\textbf{A Remark on Integral graphs:} 
By Theorem \ref{thm12} and Theorem \ref{thm13} it follows that $\Gamma_{nc}(A(n,v))$ and $\Gamma_{nc}(A(n,p))$ are integral. Also, if $\frac{G}{Z(G)} \cong {\mathbb{Z}}_p \times {\mathbb{Z}}_p$, where $p$ is any prime, then by \cite[Theorem 3.4]{GGB} we have  $\Gamma_{nc}(G)$ is integral. The following three observations on perfect squares are useful in finding more finite non-abelian groups such that their  non-commuting graphs are integral.
%By \eqref{spectrum-V8n} it follows that $\Gamma_{nc}()$

\begin{observation}\label{Oservation-1}
 The  positive integers $n_i$ such then $(n_i - 1)(5n_i - 1)$ is a perfect square are given by
\[
n_{i+2} = 322 n_{i+1} - n_i - 192
\]
where
\[
\text{(i)}~~  n_1 = 1, n_2 = 65 \quad \quad \text{(ii)}~~ n_1 = 2, n_2 = 442 \quad \quad \text{(iii)}~~ n_1 = 10, n_2 = 3026.
\]  
\end{observation}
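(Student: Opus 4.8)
The plan is to recognize the condition as a Pell-type equation. Writing $(n-1)(5n-1) = 5n^2 - 6n + 1$ and setting this equal to $k^2$, I would multiply by $5$ and complete the square to obtain $(5n-3)^2 - 5k^2 = 4$. With the substitution $x = 5n-3$ and $y = k$ this becomes the Pell-type equation $x^2 - 5y^2 = 4$, and a positive integer $n$ makes $(n-1)(5n-1)$ a perfect square precisely when this equation has a nonnegative solution $(x,y)$ with $x = 5n-3$, that is, with $x \ge 2$ and $x \equiv 2 \pmod 5$. Conversely, each such $x$ returns the admissible value $n = (x+3)/5$, so the whole problem reduces to sieving the solutions of $x^2 - 5y^2 = 4$ by a congruence.

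Next I would solve $x^2 - 5y^2 = 4$ completely. Its nonnegative solutions, ordered by increasing $x$, are $(2,0),(3,1),(7,3),(18,8),(47,21),\dots$, which are exactly the pairs $(L_{2m},F_{2m})$ of Lucas and Fibonacci numbers; equivalently the $x$-coordinates form the sequence $x_m = L_{2m}$ satisfying $x_{m+1} = 3x_m - x_{m-1}$ with $x_0 = 2,\ x_1 = 3$. Completeness of this list, i.e. the absence of sporadic solutions, is the step requiring the most care, and I would establish it by the standard fundamental-unit/descent argument: a solution of $x^2 - 5y^2 = 4$ gives the norm-$1$ element $(x+y\sqrt5)/2$ of $\mathbb{Z}[(1+\sqrt5)/2]$, and since the fundamental unit $\phi = (1+\sqrt5)/2$ has norm $-1$, every norm-$1$ element is $\pm\phi^{2m} = \pm(L_{2m}+F_{2m}\sqrt5)/2$.

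I would then impose $x \equiv 2 \pmod 5$. The residues $L_{2m} \bmod 5$ are periodic with pattern $2,3,2,3,\dots$, so the congruence holds exactly for even $m$; hence the admissible $x$ are $x = L_{4t}$, $t = 0,1,2,\dots$, giving all solutions $n_t = (L_{4t}+3)/5 = 1,2,10,65,442,3026,\dots$. Because $L_{4t}$ satisfies $x_{t+1} = 7x_t - x_{t-1}$ (the characteristic roots being $\phi^{\pm4}$, of sum $7$ and product $1$), substituting $x = 5n-3$ shows the full solution sequence obeys $n_{t+1} = 7n_t - n_{t-1} - 3$.

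Finally I would repackage this single sequence as the three stated families. Grouping the $n_t$ by $t \bmod 3$ yields three subsequences, and a step-$3$ subsequence of a recurrence whose characteristic roots $\rho,\rho^{-1}$ have sum $7$ and product $1$ satisfies a recurrence with coefficient $\rho^3 + \rho^{-3} = 7^3 - 3\cdot 7 = 322$ (and product $1$); carrying this back through $x = 5n-3$ gives precisely $n_{i+2} = 322\,n_{i+1} - n_i - 192$. It then remains only to read off the initial data of the three residue classes, namely $(1,65)$, $(2,442)$ and $(10,3026)$, matching cases (i), (ii) and (iii). The single genuine obstacle is proving completeness of the Pell solution set; the congruence sieve, the root-power identities giving $322$, and the verification of the three pairs of initial conditions are then routine.
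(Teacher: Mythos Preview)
Your argument is correct and, in fact, supplies a proof that the paper itself does not give: Observation~3.1 is stated there without any justification (beyond a table of values and an acknowledgement of Math StackExchange discussions), so there is no ``paper's own proof'' to compare against. Your reduction $(5n-3)^2 - 5k^2 = 4$, the identification of all nonnegative solutions as $(L_{2m},F_{2m})$ via the unit group of $\mathbb{Z}[(1+\sqrt5)/2]$, the congruence sieve $L_{2m}\equiv 2\pmod 5\Leftrightarrow m$ even, and the step-$3$ thinning yielding the coefficient $\phi^{12}+\phi^{-12}=322$ all check out. The paper does remark, after its table, that the columns $\sqrt{n-1}$ and $\sqrt{5n-1}$ coincide with every other Fibonacci and Lucas number respectively, which is exactly the phenomenon your Pell analysis explains --- but the paper records this only as a curiosity, not as an ingredient of a proof. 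In short: your proposal is a genuine proof where the paper offers none, and it moreover accounts for the Fibonacci/Lucas pattern the authors noticed empirically.
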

\noindent The first few numbers in each of the cases are listed below:

\noindent Case $(i)$: $1, 65, 20737, 6677057, 2149991425, 692290561601, 222915410843905,$ 

~~~~~~~~$71778070001175617, \dots .$ 

\noindent Case $(ii)$: $2,  442,  142130,  45765226,  14736260450,  4745030099482,  1527884955772562,$   

~~~~~~~~~$491974210728665290, \dots .$

\noindent Case $(iii)$: $10,  3026,  974170,  313679522,  101003831722,  32522920134770,  10472279279564026,\dots .$

The three lists given by Observation \ref{Oservation-1} may be combined as given in the next observation.
\begin{observation} \label{Oservation-2}
The  positive integers $n_i$ such then $(n_i - 1)(5n_i - 1)$ is a perfect square are given by
\[
n_{i+6} = 322 n_{i+3} - n_i - 192,
\]
where $n_1= 1, \; \; n_2 = 2,  \; \; n_3 =  10,   \; \; n_4 = 65,   \; \; n_5 = 442$ and  $n_6 =3026$.
\end{observation}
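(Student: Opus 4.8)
The plan is to reduce the perfect-square condition to a Pell-type equation and then read off both the enumeration and the recurrence from the arithmetic of Lucas and Fibonacci numbers. First I would note that $(n-1)(5n-1) = 5n^2 - 6n + 1$, so requiring this to be a perfect square $k^2$ is equivalent, after completing the square and multiplying by $5$, to $(5n-3)^2 - 5k^2 = 4$. Writing $x = 5n-3$ (so that $x > 0$ and $x \equiv 2 \pmod 5$) and $y = k$, the problem becomes: enumerate the nonnegative solutions of $x^2 - 5y^2 = 4$ that additionally satisfy $x \equiv 2 \pmod 5$, since then $n = (x+3)/5$ runs exactly over the integers we are after.

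Next I would solve $x^2 - 5y^2 = 4$ completely. Setting $\alpha = \tfrac{x + y\sqrt{5}}{2}$, the equation says $N(\alpha) = \alpha\bar\alpha = 1$, and reducing $x^2 - 5y^2 = 4$ modulo $2$ gives $x \equiv y \pmod 2$, so $\alpha$ lies in the ring $\mathbb{Z}[\tfrac{1+\sqrt5}{2}]$. This ring is Euclidean with fundamental unit $\varepsilon = \tfrac{1+\sqrt5}{2}$ of norm $-1$, so every positive norm-$1$ element is $\varepsilon^{2k}$; using $\varepsilon^{m} = \tfrac{L_m + F_m\sqrt5}{2}$ I obtain that the nonnegative solutions are precisely $(x,y) = (L_{2k}, F_{2k})$, $k \ge 0$, where $L_m$ and $F_m$ are the Lucas and Fibonacci numbers. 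The congruence $x \equiv 2 \pmod 5$ then filters these: since $L_m \bmod 5$ is periodic with period $4$ and equals $2$ exactly when $4 \mid m$, the admissible $x$ are $x = L_{4j}$, $j \ge 0$. Because $L_{4j}$ is strictly increasing, listing the solutions in increasing order yields the single sequence $n_i = \tfrac{1}{5}\big(L_{4(i-1)} + 3\big)$, $i \ge 1$, whose first six values are $1, 2, 10, 65, 442, 3026$. Splitting the index $j$ into residue classes modulo $3$ recovers the three lists of Observation \ref{Oservation-1}, which is precisely why they merge into one sequence.

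Finally, the recurrence follows from the Lucas addition identity $L_{p+12} + L_{p-12} = L_{12}\,L_p = 322\,L_p$. Taking $p$ to be the Lucas index of $n_{i+3}$, namely $p = 4(i+2)$, so that $p \pm 12$ are the indices of $n_{i+6}$ and $n_i$, and substituting $L = 5n - 3$ turns this identity into $n_{i+6} = 322\,n_{i+3} - n_i - 192$; the constant $-192$ is exactly the affine correction produced by the shift $x = 5n-3$. A direct check on $n_1, \dots, n_6$ then completes the proof. I expect the main obstacle to be the completeness claim in the second step, that is, ruling out any further solution class of $x^2 - 5y^2 = 4$ beyond the even-indexed Lucas/Fibonacci pairs; this rests on the unit structure of $\mathbb{Z}[\tfrac{1+\sqrt5}{2}]$, after which the remaining effort is only index bookkeeping to pin down the constant. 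Alternatively, if Observation \ref{Oservation-1} is taken as given, the argument collapses to sorting its three lists and re-indexing, the one point needing care being that the lists interleave generation by generation so that $n_i, n_{i+3}, n_{i+6}$ are consecutive terms of a single list, which is immediate from the monotonicity of $L_{4j}$.
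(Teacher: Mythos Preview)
Your argument is correct. The reduction $(n-1)(5n-1)=k^2 \iff (5n-3)^2-5k^2=4$, the identification of all nonnegative solutions of $x^2-5y^2=4$ with $(L_{2k},F_{2k})$ via the unit group of $\mathbb{Z}[\tfrac{1+\sqrt5}{2}]$, the congruence filter $L_{2k}\equiv 2\pmod 5\iff 4\mid 2k$, and the derivation of the recurrence from $L_{p+12}+L_{p-12}=L_{12}L_p=322\,L_p$ all check out; substituting $L=5n-3$ indeed yields the constant $-192$.

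Compared with the paper: the paper does not prove this statement. Observation~\ref{Oservation-2} is asserted without argument, preceded only by the sentence that the three lists of Observation~\ref{Oservation-1} ``may be combined,'' and followed by the informal remark that the table columns consist of every other Fibonacci and Lucas number. Your write-up therefore supplies a genuine proof where the paper offers none, and in fact establishes the stronger completeness claim (that \emph{all} positive $n$ with $(n-1)(5n-1)$ a perfect square arise this way), which the paper's recurrence formulation leaves implicit. Your alternative route---taking Observation~\ref{Oservation-1} as given and merely interleaving---is exactly the level at which the paper operates, so your Pell/Lucas argument is strictly more than what the paper does.
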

The first $28$ positive numbers $n$ such that $(n - 1)(5n - 1)$ is a perfect square are given   below:
\begin{center}
\begin{tabular}{|c|c|c|c|}
\hline 
    &                &                &\\
$n$ & $\sqrt{n - 1}$ & $\sqrt{5n - 1}$& $\frac{n}{2}$ (if $n$ is even) \\ 
\hline 
$1$ & $0$ & $2$ & \\ 
\hline 
$2$ & $1$ & $3$ & $1$ \\ 
\hline 
$10$ & $3$ & $7$ & $5$ \\ 
\hline 
$65$ & $8$ & $18$ & \\ 
\hline 
$442$ &   $21$ &   $47$ & $221$\\
\hline
$3026$ &   $55$ &   $123$ &$1513$ \\
\hline
$20737$ &   $144$ &  $322$ &\\ 
\hline 
$142130$ & $377$ &  $843$ & $71065$\\
\hline 
$974170$ & $987$ & $2207$ & $487085$ \\ 
\hline
$6677057$ &   $2584$ &   $5778$ &\\ 
\hline 
$45765226$ &   $6765$ &  $15127$ & $22882613$ \\
\hline
$313679522$ &  $17711$ &   $39603$ & $156839761$ \\ 
\hline
$2149991425$ &   $46368$ &   $103682$ &  \\
\hline 
$14736260450$ & $121393$   &   $271443$ & $7368130225$ \\
\hline 
$101003831722$ &   $317811$ &   $710647$ & $50501915861$\\
\hline
$692290561601$ &   $832040$ &   $1860498$ & \\ 
\hline 
$4745030099482$ & $2178309$   & $4870847$ & $2372515049741$  \\ 
\hline 
$32522920134770$ & $5702887$  & $12752043$ & $16261460067385$ \\
\hline 
$222915410843905$ & $14930352$  & $33385282$ & \\ 
\hline 
$1527884955772562$ & $39088169$  & $87403803$ & $763942477886281$\\
\hline
$10472279279564026$ & $102334155$ & $228826127$ & $5236139639782013$\\  
\hline
$71778070001175617$ & $267914296$ & $599074578$ & \\  
\hline
$491974210728665290$ & $701408733$ & $1568397607$ & $245987105364332645$ \\  
\hline
$3372041405099481410$ & $1836311903$  & $4106118243$ & $1686020702549740705$\\ 
\hline 
$23112315624967704577$ & $4807526976$   & $10749957122$ & \\
\hline 
$158414167969674450626$ & $12586269025$  & $28143753123$ & $79207083984837225313$ \\
\hline 
$1085786860162753449802$ & $32951280099$ & $73681302247$ & $542893430081376724901$\\ 
\hline 
$7442093853169599697985$ & $86267571272$ & $192900153618$ &  \\ 
\hline 
\end{tabular} 
\end{center}

It is interesting to  observe that the numbers appeared in the second  and third columns of the above table are the every other Fibonacci number and Lucas number respectively. 

%1 ::  0  ::   2^2
%2 ::    1  ::   3^2
%10 ::   3^2 ::   7^2
%65 ::   2^6 ::   2^2 3^4
%442 ::   3^2 7^2 ::   47^2
%3026 ::   5^2 11^2 ::   3^2 41^2
%20737 ::   2^8 3^4 ::   2^2 7^2 23^2
%142130 ::   13^2 29^2 ::   3^2 281^2
%974170 ::   3^2 7^2 47^2 ::   2207^2
%6677057 ::   2^6 17^2 19^2 ::   2^2 3^6 107^2
%45765226 ::   3^2 5^2 11^2 41^2 ::   7^2 2161^2
%313679522 ::   89^2 199^2 ::   3^2 43^2 307^2
%2149991425 ::   2^10 3^4 7^2 23^2 ::   2^2 47^2 1103^2
%14736260450 ::   233^2 521^2 ::   3^2 90481^2
%101003831722 ::   3^2 13^2 29^2 281^2 ::   7^4 14503^2
%692290561601 ::   2^6 5^2 11^2 31^2 61^2 ::   2^2 3^4 41^2 2521^2
%4745030099482 ::   3^2 7^2 47^2 2207^2 ::   1087^2 4481^2
%32522920134770 ::   1597^2 3571^2 ::   3^2 67^2 63443^2
%222915410843905 ::   2^8 3^6 17^2 19^2 107^2 ::   2^2 7^2 23^2 103681^2
%1527884955772562 ::   37^2 113^2 9349^2 ::   3^2 29134601^2
%10472279279564026 ::   3^2 5^2 7^2 11^2 41^2 2161^2 ::   47^2 1601^2 3041^2
%71778070001175617 ::   2^6 13^2 29^2 211^2 421^2 ::   2^2 3^4 83^2 281^2 1427^2
%491974210728665290 ::   3^2 43^2 89^2 199^2 307^2 ::   7^2 263^2 881^2 967^2
%3372041405099481410 ::   139^2 461^2 28657^2 ::   3^2 4969^2 275449^2
%23112315624967704577 ::   2^12 3^4 7^2 23^2 47^2 1103^2 ::   2^2 769^2 2207^2 3167^2
%158414167969674450626 ::   5^4 11^2 101^2 151^2 3001^2 ::   3^2 41^2 401^2 570601^2
%1085786860162753449802 ::   3^2 233^2 521^2 90481^2 ::   7^2 103^2 102193207^2
%7442093853169599697985 ::   2^6 17^2 19^2 53^2 109^2 5779^2 ::   2^2 3^8 107^2 11128427^2

\begin{observation} \label{Oservation-3}
If $n$ is an even positive integer such that $(n - 1)(5n - 1)$ is a perfect square then $(2m - 1)(10m - 1)$ is also a perfect square for $m = \frac{n}{2}$.
\end{observation}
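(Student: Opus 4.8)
The plan is to observe that the asserted conclusion is, in fact, an algebraic identity in disguise: the two quantities $(n-1)(5n-1)$ and $(2m-1)(10m-1)$ become literally equal once $m$ is chosen to be $n/2$, so there is nothing to prove beyond performing that substitution. Accordingly, my strategy is simply to unwind the definition of $m$ and compare factors.

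First I would invoke the hypothesis that $n$ is even to write $n = 2m$, where $m = \tfrac{n}{2}$ is a positive integer. Substituting $n = 2m$ into the two factors appearing in the hypothesis gives $n - 1 = 2m - 1$ and $5n - 1 = 5(2m) - 1 = 10m - 1$. Hence
\[
(2m-1)(10m-1) = (n-1)(5n-1).
\]
By assumption the right-hand side is a perfect square, and therefore so is the left-hand side, which is precisely the claim. No Pell-equation machinery or appeal to Observation~\ref{Oservation-1} or Observation~\ref{Oservation-2} is needed; the statement follows from this single identity.

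The only subtlety worth flagging — and the reason this merits being recorded separately rather than being absorbed silently into the surrounding discussion — is interpretive rather than computational: the identity links the \emph{even-indexed} solutions of the condition ``$(n-1)(5n-1)$ is a perfect square'' (which controls integrality in the odd-$n$ case of $SD_{8n}$ through Theorem~\ref{thm3}(b)) to the solutions of ``$(2m-1)(10m-1)$ is a perfect square'' (which controls integrality for $V_{8n}$ and for the even-$n$ case of $SD_{8n}$). Thus I do not expect any genuine obstacle in the proof itself; the entire content is the parameter shift $m = n/2$, and the ``hard part'' is merely recognizing that this shift converts one perfect-square condition into the other.
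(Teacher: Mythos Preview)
Your proof is correct: the substitution $n=2m$ makes $(n-1)(5n-1)$ and $(2m-1)(10m-1)$ literally equal, so the observation is immediate. The paper states this observation without proof, and your one-line substitution is precisely the intended (and only reasonable) argument.
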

%The solutions are given by
%n=\dfrac{\left(\dfrac{3+\sqrt5}2\right)^j+\left(\dfrac{3-\sqrt5}2\right)^j+3}{10},
%where $j\equiv2$ or $4\pmod6$.

%Fundamental Unit: Mod(1/2*x - 1/2, x^2 - 5)

%Fundamental Solutions (Norm): [2]

%j = 4    (n, k) = (1, 3)
%j = 8    (n, k) = (5, 21)
%j = 16    (n, k) = (221, 987)
%j = 20    (n, k) = (1513, 6765)
%j = 28    (n, k) = (71065, 317811)
%j = 32    (n, k) = (487085, 2178309)
%j = 40    (n, k) = (22882613, 102334155)
%j = 44    (n, k) = (156839761, 701408733)
%j = 52    (n, k) = (7368130225, 32951280099)
%j = 56    (n, k) = (50501915861, 225851433717)
%j = 64    (n, k) = (2372515049741, 10610209857723)
%j = 68    (n, k) = (16261460067385, 72723460248141)
%j = 76    (n, k) = (763942477886281, 3416454622906707)
%j = 80    (n, k) = (5236139639782013, 23416728348467685)
%j = 88    (n, k) = (245987105364332645, 1100087778366101931)
%j = 92    (n, k) = (1686020702549740705, 7540113804746346429)
%j = 100    (n, k) = (79207083984837225313, 354224848179261915075)

By Observation \ref{Oservation-1} and  Observation \ref{Oservation-3} it follows that there are infinitely many positive integers $n$ such that $(2n - 1)(10n - 1)$ is  a perfect square. Some of these numbers are given in the above table. The results obtained in Section 2 and the above observations give  many families of finite non-abelian groups whose non-commuting graphs are integral.  
\begin{theorem}\label{integral-NCG}
Let $n_i$ be  positive integers such that $n_1= 1, \; \; n_2 = 2,  \; \; n_3 =  10,   \; \; n_4 = 65,   \; \; n_5 = 442$, $n_6 =3026$ and
\[
n_{i+6} = 322 n_{i+3} - n_i - 192 \text{ for } i \geq 1.
\] 
Then 
\begin{enumerate}
\item $\Gamma_{nc}(V_{8n_i})$ (for odd $n$) is integral.
\item $\Gamma_{nc}(SD_{8n_i})$ is integral if $n_i$ is odd.
\item $\Gamma_{nc}(SD_{8m_i})$ is integral if $n_i$ is even and $m_i = \frac{n_i}{2}$.
\item $\Gamma_{nc}(G)$ is integral if $\frac{G}{Z(G)} \cong D_{2n_i}$.
\item $\Gamma_{nc}(M_{2n_is})$ is integral if $n_i$ is odd.
\item $\Gamma_{nc}(M_{2m_is})$ is integral if $m_i = 2n_i$.
\item $\Gamma_{nc}(D_{2n_i})$ is integral if $n_i$ is odd.
\item $\Gamma_{nc}(D_{2m_i})$ is integral if $m_i = 2n_i$.
\item $\Gamma_{nc}(Q_{4n_i})$ is integral.
\end{enumerate}
\end{theorem}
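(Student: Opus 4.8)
The plan is to reduce the integrality of each of the nine graphs to a single perfect-square condition and then to read that condition off the two observations preceding the statement. Recall that a graph is integral precisely when every eigenvalue of its adjacency matrix is an integer. For each family in items (1)--(9) the spectrum has already been computed (in \eqref{spectrum-V8n}--\eqref{spectrum-SD8n-n-even}, in Theorem \ref{thm10}, and in Corollary \ref{cor-M2rs} together with its own corollaries), and in every case it consists of a block of manifestly integer eigenvalues ($0$ with high multiplicity, together with one negative integer such as $-n$, $-2$, $-4$) and exactly one further conjugate pair. So the whole question reduces to whether that last pair is integral.

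The uniform device I would exploit is that this conjugate pair always arises, through Lemma \ref{lem7} and Theorem \ref{thm1}, as the two roots of a \emph{monic} quadratic $\lambda^2 - b\lambda - c$ with $b,c \in \mathbb{Z}$. Indeed, the nonzero block contributes the factor $\lambda^2 - (n-1)\lambda - mn$ of Lemma \ref{lem7}, and tensoring by $J_{|Z(G)| \times |Z(G)|}$ merely rescales the nonzero eigenvalues by the integer $|Z(G)|$, which keeps the quadratic monic over $\mathbb{Z}$. By the rational root theorem a monic integer quadratic has integer roots as soon as it has rational roots, so the pair is integral if and only if its discriminant $b^2 + 4c$ is a perfect square. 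A short computation shows that in each case this discriminant equals $(\text{integer})^2 \cdot D$, where $D = (m-1)(5m-1)$ or $D = (2m-1)(10m-1)$ depending on the family; for instance the $D_{2m}$-quotient case gives $(m-1)^2 + 4m(m-1) = (m-1)(5m-1)$, while the $V_{8n}$ case gives $\bigl(2(2n-1)\bigr)^2 + 4\cdot 8n(2n-1) = 4(2n-1)(10n-1)$. Hence each graph is integral if and only if the corresponding $D$ is a perfect square. A pleasant consequence is that I never have to treat separately the factors of $2$ in the denominators appearing in Theorem \ref{thm10} and Corollary \ref{cor-M2rs}: the rational root argument collapses all those apparent half-integer issues at once.

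It then remains to match each item to its value of $D$ and invoke the observations. Observation \ref{Oservation-2} states that $(n_i)$ consists of exactly the positive integers $n$ with $(n-1)(5n-1)$ a perfect square, which immediately disposes of every item whose radicand has the shape $(m-1)(5m-1)$ -- namely items (2), (4), (5), (7), (9), and the relevant halved parameters in (6) and (8) -- by taking $m$ equal to the prescribed parameter ($n_i$, or $n_i/2$, or $2n_i$ as indicated, so that $(m-1)(5m-1)$ becomes $(n_i-1)(5n_i-1)$). For the items whose radicand has the shape $(2m-1)(10m-1)$, that is the $V_{8n}$ and even-$SD_{8n}$ cases (items (1) and (3)), I would instead invoke Observation \ref{Oservation-3}: when $n_i$ is even and $(n_i-1)(5n_i-1)$ is a perfect square, then $(2m-1)(10m-1)$ is a perfect square for $m = n_i/2$, which is exactly the parameter arising there. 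Carrying out this matching across all nine items completes the argument.

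I expect the main obstacle to be purely organizational rather than analytic: keeping straight, across the six source results, which parameter plays the role of $m$, whether a substitution $m = n_i/2$ or $m = 2n_i$ is required, and whether Observation \ref{Oservation-2} or Observation \ref{Oservation-3} is the one to apply. Once the discriminant identity $b^2 + 4c = (\text{integer})^2 D$ has been recorded for each of the two radicand shapes, the rest of the proof is bookkeeping, and the only genuine mathematical input is the rational root theorem applied to the monic quadratics furnished by Lemma \ref{lem7}.
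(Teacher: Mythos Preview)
Your overall strategy coincides with the paper's implicit one (Theorem~\ref{integral-NCG} carries no separate proof there): read off the spectra computed in Section~2 and verify that the sole square-root term is an integer by means of Observations~\ref{Oservation-2}--\ref{Oservation-3}. The rational-root-theorem device you add, which absorbs the factors of~$2$ in the denominators of Theorem~\ref{thm10} and its corollaries without a parity case split, is a clean touch that the paper does not make explicit.

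There is, however, a genuine bookkeeping slip in your handling of items~(1) and~(3). For item~(1) you say Observation~\ref{Oservation-3} supplies the parameter $m=n_i/2$, but $V_{8n_i}$ has radicand $(2n_i-1)(10n_i-1)$ in \eqref{spectrum-V8n}; to match Observation~\ref{Oservation-3} you would need $2n_i$ (not $n_i$) to be an even member of the sequence, and the recursion does not provide this---for instance $n_4=65$ gives $(129)(649)=3\cdot 11\cdot 43\cdot 59$, which is not a perfect square. For item~(3) you assume the even-$n$ formula \eqref{spectrum-SD8n-n-even} applies to $SD_{8m_i}$, but a glance at the fourth column of the table shows that every listed $m_i=n_i/2$ is odd, so the relevant radicand is $(m_i-1)(5m_i-1)$ from \eqref{spectrum-SD8n-n-odd}, and Observation~\ref{Oservation-3} does not speak to that quantity. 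Your matching for items~(2) and (4)--(9), where the radicand reduces directly to $(n_i-1)(5n_i-1)$, goes through exactly as you describe; it is worth re-examining whether the parameters in items~(1) and~(3) as stated can really be reached from the recorded observations.
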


Recall that a complete  $r$-partite graph, denoted by $K_{n_1, n_2, \dots, n_r}$, is a graph whose vertex set is the union of $r$ disjoint subsets $V_1, V_2, \dots, V_r$ of it such that $|V_i| = n_i$ for $i = 1, 2, \dots, r$ and any two vertices of $K_{n_1, n_2, \dots, n_r}$ are adjacent if and only if they belong to different $V_i$'s. As mentioned in the Introduction it is interesting to construct infinite families of integral complete  $r$-partite graphs. Note the the non-commuting graphs of the groups considered in this paper are all complete $r$-partite for some $r$. More precisely, we have  
\begin{equation}\label{K-a,b}
\Gamma_{nc}(G) = \begin{cases}
K_{(2n).2, 1.(4n-2)}& \text{ if } G \cong V_{8n} \text{ (for odd $n$)}\\
K_{n.4, 1.(4n-4)}& \text{ if  $G \cong SD_{8n}$ and $n$ is odd}\\
K_{(2n).2, 1.(4n-2)}& \text{ if  $G \cong SD_{8n}$ and $n$ is even} \\
K_{m.|Z(G)|, 1.((m-1)|Z(G)|)}& \text{ if  $\frac{G}{Z(G)} \cong D_{2m}$} \\
K_{r.s, 1.(s(r-1))}& \text{ if   $G \cong M_{2rs}$ and $r$ is odd }\\
K_{\frac{r}{2}.(2s), 1.(2s(\frac{r}{2}-1))}& \text{ if   $G \cong M_{2rs}$ and $r$ is even }\\
K_{n.1, 1.(n-1)}& \text{ if  $G \cong D_{2n}$ and $n$ is odd}\\
K_{\frac{n}{2}.2, 1.(2(\frac{n}{2}-1))}& \text{ if  $G \cong D_{2n}$ and $n$ is even}\\
K_{n.2, 1.(2(n-1))}& \text{ if } G \cong Q_{4n},  
\end{cases}
\end{equation}
where  $K_{a_1.n_1, a_2.n_2} = K_{\underset{a_1-\text{times}}{\underbrace{n_1, \dots, n_1}}, \underset{a_2-\text{times}}{\underbrace{n_2, \dots, n_2}}}$.

In view of Theorem \ref{integral-NCG} and \eqref{K-a,b} we have the following result that gives many integral complete $r$-partite graphs.
\begin{theorem}\label{integral-K-a-b}
Let $n_i$ be  positive integers such that $n_1= 1, \; \; n_2 = 2,  \; \; n_3 =  10,   \; \; n_4 = 65,   \; \; n_5 = 442$, $n_6 =3026$ and
\[
n_{i+6} = 322 n_{i+3} - n_i - 192 \text{ for } i \geq 1.
\] 
Then 
\begin{enumerate}
\item $K_{(2n_i).2, 1.(4n_i-2)}$ is integral.
\item $K_{n_i.4, 1.(4n_i-4)}$ is integral if $n_i$ is odd.
\item $K_{(2m_i).2, 1.(4m_i-2)}$ is integral if $n_i$ is even and $m_i = \frac{n_i}{2}$.
\item $K_{n_i.|Z(G)|, 1.((n_i-1)|Z(G)|)}$ is integral if $\frac{G}{Z(G)} \cong D_{2n_i}$.
\item $K_{n_i.s, 1.(s(n_i-1))}$ is integral if $n_i$ is odd.
\item $K_{\frac{m_i}{2}.(2s), 1.(2s(\frac{m_i}{2}-1))}$ is integral if $m_i = 2n_i$.
\item $K_{n_i.1, 1.(n_i-1)}$ is integral if $n_i$ is odd.
\item $K_{\frac{m_i}{2}.2, 1.(2(\frac{m_i}{2}-1))}$ is integral if $m_i = 2n_i$.
\item $K_{n_i.2, 1.(2(n_i-1))}$ is integral.
\end{enumerate}
\end{theorem}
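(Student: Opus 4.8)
The plan is to obtain Theorem~\ref{integral-K-a-b} as a direct corollary of Theorem~\ref{integral-NCG} together with the explicit list of graph isomorphisms~\eqref{K-a,b}. The guiding remark is that the adjacency spectrum of a complete multipartite graph is determined entirely by the multiset of its part sizes; consequently each abstract graph $K_{\dots}$ occurring in items (1)--(9) has the same spectrum as the non-commuting graph to which~\eqref{K-a,b} equates it, whether or not a realizing group happens to exist for the given value of the parameter. Thus it suffices to read off, item by item, the corresponding non-commuting graph and to quote its integrality from the correspondingly numbered item of Theorem~\ref{integral-NCG}.

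First I would lay out the dictionary furnished by~\eqref{K-a,b}: item (1) is $\Gamma_{nc}(V_{8n_i})$ for odd $n_i$ and $\Gamma_{nc}(SD_{8n_i})$ for even $n_i$; (2) is $\Gamma_{nc}(SD_{8n_i})$ with $n_i$ odd; (3) is $\Gamma_{nc}(SD_{8m_i})$ with $m_i = n_i/2$; (4) is $\Gamma_{nc}(G)$ with $G/Z(G)\cong D_{2n_i}$; and (5)--(9) are the non-commuting graphs of $M_{2n_is}$, $M_{2m_is}$, $D_{2n_i}$, $D_{2m_i}$ and $Q_{4n_i}$ under the stated substitutions $m_i = 2n_i$. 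In every spectrum computed in Section~2 all eigenvalues are plainly integers except for a single conjugate pair carrying one radical, always of the form $\sqrt{(k-1)(5k-1)}$ or $\sqrt{(2k-1)(10k-1)}$ for the relevant $k$. Hence integrality of each graph is equivalent to the corresponding radicand being a perfect square; the accompanying division by $2$, where present, causes no trouble, since a short parity check on $k-1$ and $5k-1$ shows that $(k-1)\pm\sqrt{(k-1)(5k-1)}$ is even in both parities of $k$.

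The second step is to certify these radicands as perfect squares for the indices produced by the recurrence. Radicands of the first type, $(k-1)(5k-1)$, are perfect squares exactly for the values $k = n_i$, which is the content of Observation~\ref{Oservation-2}. Radicands of the second type are handled through the identity $(2k-1)(10k-1) = (N-1)(5N-1)$ with $N = 2k$, so that $(2k-1)(10k-1)$ is a perfect square precisely when $2k$ is an even member of the list of Observation~\ref{Oservation-2}; this is the reduction recorded in Observation~\ref{Oservation-3}, and it is what forces the substitution $m_i = n_i/2$ (respectively $m_i = 2n_i$) in the even-parameter items. Pairing each radicand with the appropriate observation then closes every case.

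The genuine work lies entirely in this matching, and that is also where errors are most likely: one must track which of $n_i$, $n_i/2$ or $2n_i$ enters each spectrum and verify that the parity demanded by~\eqref{K-a,b} is consistent with the parity that the perfect-square observations impose. Once the correspondence is tabulated, each of the nine assertions is discharged by a single reference to Theorem~\ref{integral-NCG}, equivalently to the relevant Section~2 spectrum together with Observation~\ref{Oservation-2} or Observation~\ref{Oservation-3}, so no computation beyond this bookkeeping remains.
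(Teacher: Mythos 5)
Your proposal is correct and follows exactly the route the paper takes: Theorem \ref{integral-K-a-b} is stated there as an immediate consequence of Theorem \ref{integral-NCG} combined with the identifications in \eqref{K-a,b}, which is precisely your argument. Your additional checks (that the spectrum of a complete multipartite graph depends only on the part sizes, the parity of the numerators under division by $2$, and the matching of radicands to Observations \ref{Oservation-2} and \ref{Oservation-3}) merely make explicit the bookkeeping the paper leaves implicit.
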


\section{Comparing energy and Laplacian energy}

In  2018, Dutta and Nath \cite{DN} have computed the Laplacian energy of non-commuting graphs of  several classes of finite non-abelian groups. In  this section, we compare energy  and Laplacian energy of non-commuting graphs of certain classes of  finite non-abelian  groups. We begin this section with the following computations of the  Laplacian energies of non-commuting graphs of the groups  $V_{8n}$ (for odd $n$) and $SD_{8n}$.
\begin{theorem}\label{thm20-21}
If $G = V_{8n}$ (where $n$ is odd) then $\lspec(\Gamma_{nc}(G)) = \{0, (8n-4)^{2n}, (4n)^{4n-3},  (8n-2)^{2n}\}$ and $LE(\Gamma_{nc}(G)) = \frac{8n(8n^2-8n+3)}{4n-1}$.
\end{theorem}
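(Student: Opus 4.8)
The plan is to exploit the description of $\Gamma_{nc}(V_{8n})$ as a complete multipartite graph and to pass to its complement, where the Laplacian spectrum is transparent. By \eqref{K-a,b} (which rests on \cite[Lemma 3.1]{Fsn-19}), for odd $n$ the graph $\Gamma_{nc}(V_{8n})$ equals $K_{(2n).2,\,1.(4n-2)}$, that is, the complete multipartite graph with $2n$ parts of size $2$ and one part of size $4n-2$, on $N := 8n-2$ vertices. Its complement is therefore the disjoint union $H := (2n)K_2 \sqcup K_{4n-2}$. Since there is a clean relation between the Laplacian spectrum of a graph and that of its complement (\cite[Theorem 3.6]{Mohar-91}), I would first compute $\lspec(H)$ and then transfer to $\Gamma_{nc}(V_{8n}) = \overline{H}$.

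First I would record the Laplacian spectra of the building blocks: $K_2$ contributes $\{0,2\}$ and $K_{4n-2}$ contributes $\{0,(4n-2)^{4n-3}\}$. Taking the multiset union over the $2n$ copies of $K_2$ and the single $K_{4n-2}$ gives $\lspec(H) = \{0^{2n+1},\, 2^{2n},\, (4n-2)^{4n-3}\}$; the multiplicity $2n+1$ of $0$ is a useful sanity check, being the number of connected components of $H$. Now apply the complement relation, which keeps a single eigenvalue $0$ and replaces the remaining eigenvalues $\mu_2 \le \cdots \le \mu_N$ of $H$ by $N-\mu_i$. The $2n$ surplus copies of $0$ become $N = 8n-2$ (multiplicity $2n$), the eigenvalue $2$ becomes $N-2 = 8n-4$ (multiplicity $2n$), and the eigenvalue $4n-2$ becomes $N-(4n-2) = 4n$ (multiplicity $4n-3$). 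This yields exactly $\lspec(\Gamma_{nc}(V_{8n})) = \{0,\,(8n-4)^{2n},\,(4n)^{4n-3},\,(8n-2)^{2n}\}$.

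For the Laplacian energy I would use that $\frac{2|e(\Gamma_{nc}(V_{8n}))|}{|v(\Gamma_{nc}(V_{8n}))|}$ equals the average $\bar d := \frac{1}{N}\sum_{\mu} \mu$ of the Laplacian eigenvalues, since $\sum_{\mu}\mu = \mathrm{tr}(L) = 2|e(\Gamma_{nc}(V_{8n}))|$. Summing the spectrum just found gives $\sum_{\mu}\mu = 48n^2 - 24n$, hence $\bar d = \frac{12n(2n-1)}{4n-1}$ (using $N = 2(4n-1)$). Then $LE = \sum_{\mu}|\mu - \bar d|$ is evaluated term by term, and the delicate point — the one thing to get right — is the sign of each $\mu-\bar d$. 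Elementary inequalities show $4n \le \bar d \le 8n-4 < 8n-2$ for all $n\ge 1$, with the two outer inequalities tight only at $n=1$, where the corresponding terms vanish and so cause no harm. With the signs fixed, the coefficient of $\bar d$ in the resulting sum collapses to $1 - 2n + (4n-3) - 2n = -2$ and the $\bar d$-free part collapses to $16n^2$, giving $LE = 16n^2 - 2\bar d = \frac{8n(8n^2-8n+3)}{4n-1}$ after clearing denominators. The computation is otherwise routine; the only genuine obstacle is organizing this sign analysis so as to absorb the degenerate $n=1$ case uniformly, and applying the complement relation correctly (in particular, retaining exactly one zero eigenvalue unreflected).
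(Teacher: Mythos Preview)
Your proposal is correct and reaches the stated spectrum and Laplacian energy, but it takes a different route from the paper. The paper derives $\lspec(\Gamma_{nc}(V_{8n}))$ group-theoretically: it identifies the center $Z(V_{8n})=\langle b^2\rangle$, lists the distinct centralizers $X_1,\dots,X_{2n+1}$ (one of order $4n$ and $2n$ of order $4$), observes that $V_{8n}$ is an AC-group, and then invokes \cite[Theorem~2.4]{DN-2018} as a black box to read off the Laplacian spectrum directly. You instead work purely graph-theoretically: starting from the complete-multipartite description $K_{(2n).2,\,1.(4n-2)}$, you pass to the complement $(2n)K_2\sqcup K_{4n-2}$, compute its Laplacian spectrum by hand, and transfer back via Mohar's complement relation \cite[Theorem~3.6]{Mohar-91}. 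Your approach is more self-contained and makes the mechanism visible (it is, in effect, what underlies \cite[Theorem~2.4]{DN-2018}), at the cost of a small amount of extra bookkeeping; the paper's approach is shorter because the machinery has been packaged elsewhere. For the Laplacian energy the two computations are essentially identical, though your organization via the collapse of the $\bar d$-coefficient to $-2$ is a tidy way to handle the $n=1$ degeneracy that the paper does not comment on. One minor remark: the attribution of the $V_{8n}$ case of \eqref{K-a,b} to \cite[Lemma~3.1]{Fsn-19} is defensible (since $V_{8n}/Z(V_{8n})\cong D_{4n}$ for odd $n$), but the paper itself traces this structure to the proof of \cite[Theorem~3.11]{GGB}.
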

\begin{proof}
%Since $V_{8n}$ be a finite non-abelian CA-group and  
We have $Z(G) = \langle b^2 \rangle$. Therefore,   $|Z(G)| = 2$ and $|v(\Gamma_{nc}(V_{8n}))| = 8n - 2$.  The distinct centralizers of  $V_{8n}$ are given by
\begin{align*}
X_0 & = G\\
X_1 & = Z(G) \sqcup aZ(G) \sqcup a^2Z(G) \sqcup \cdots \sqcup a^{2n - 1}Z(G) \\
X_2 & = Z(G) \sqcup bZ(G)\\
X_3 & = Z(G) \sqcup abZ(G)\\
& \vdots\\
X_{2 n + 1} & = Z(G) \sqcup a^{2n - 1}bZ(G).
\end{align*}
It is easy to see that $V_{8n}$ is an AC-group with   $|X_1| = 4n$ and $|X_2| =  \cdots |X_{2n+1}| = 4$. Therefore,   using    \cite[Theorem 2.4]{DN-2018}, we get
\[
\lspec(\Gamma_{nc}(G)) = \{0, (8n-4)^{2n}, (4n)^{4n-3},  (8n-2)^{2n}\}.
\]
It was shown in the proof of \cite[Theorem 3.11]{GGB} that $\Gamma_{nc}(G) = K_{1.(4n - 2), (2n).2}$.   Therefore, $2|e(\Gamma_{nc}(G))|= 48n^2 -24n$. We have

$\left| 0 - \frac{2|e(\Gamma_{nc}(G))|}{|v(\Gamma_{nc}(G))|} \right| = \frac{24n^2 -12n}{4n-1}$, \quad 
${\left| (8n-4) - \frac{2|e(\Gamma_{nc}(G))|}{|v(\Gamma_{nc}(G))|} \right|} = \frac{8n^2 -12n+4}{4n-1}$, 
\quad 
${\left| 4n - \frac{2|e(\Gamma_{nc}(G))|}{|v(\Gamma_{nc}(G))|} \right|} = \frac{8n^2 -8n}{4n-1}$, 
and  
${\left| (8n-2) - \frac{2|e(\Gamma_{nc}(G))|}{|v(\Gamma_{nc}(G))|} \right|}$ =$\frac{8n^2 -4n+2}{4n-1}$. 
Therefore, 
\[
LE(\Gamma_{nc}(G)) = \frac{24n^2 -12n}{4n-1} +  \frac{2n(8n^2 -12n+4)}{4n-1} +  \frac{(4n-3)(8n^2 -8n)}{4n-1} +  \frac{2n(8n^2 -4n+2)}{4n-1} 
\]    
and the result follows on simplification.
\end{proof}
\begin{theorem}\label{thm23}
If $G = SD_{8n}$, then
\begin{enumerate}
\item $\lspec(\Gamma_{nc}(G)) = \{0, (8n-8)^{3n}, (4n)^{4n-5},  (8n-4)^n\}$ and  $LE(\Gamma_{nc}(G)) = \frac{8n(4n^2-10n+7)}{2n-1}$ if   $n$ is odd. 
\item     $\lspec(\Gamma_{nc}(G))= \{0,(8n-4)^{2n}, (4n)^{4n-3},  (8n-2)^{2n}\}$ and  $LE(\Gamma_{nc}(G))=\frac{8n(8n^2-8n+3)}{4n-1}$ if $n$ is even.
\end{enumerate}       
%$\lspec(\Gamma_{nc}(G)) = \{....................\}$ and $LE(\Gamma_{nc}(G)) = ................$.
\end{theorem}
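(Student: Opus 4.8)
The plan is to follow the template of Theorem \ref{thm20-21}: determine the center and the full list of element centralizers of $SD_{8n}$, read off $\lspec(\Gamma_{nc}(G))$ from \cite[Theorem 2.4]{DN-2018}, and then evaluate $LE$ as $\sum_{\mu}\left|\mu-\frac{2|e|}{|v|}\right|$. First I would locate the center. Since $bab^{-1}=a^{2n-1}$, the power $a^k$ is central iff $k(2n-2)\equiv 0 \pmod{4n}$, i.e.\ $k(n-1)\equiv 0 \pmod{2n}$, while no element $a^kb$ is central for $n\ge 2$. For $n$ even, $\gcd(n-1,2n)=1$ forces $k\equiv 0\pmod{2n}$, so $Z(G)=\{1,a^{2n}\}$ and $|Z(G)|=2$; for $n$ odd, $\gcd(n-1,2n)=2$ forces $n\mid k$, so $Z(G)=\{1,a^{n},a^{2n},a^{3n}\}$ and $|Z(G)|=4$. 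Hence $|v(\Gamma_{nc}(G))|=8n-2$ for $n$ even and $8n-4$ for $n$ odd.

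Next I would describe the centralizers. The subgroup $\langle a\rangle=C_G(a)$ is abelian of order $4n$, and using $(a^ib)^2=a^{2ni}\in Z(G)$ together with $b a^i b=a^{i(2n-1)}$ one verifies that every $a^ib$ has an abelian centralizer, so $SD_{8n}$ is an AC-group. I would lean here on \cite[Lemma 3.1]{Fsn-19} and on the proof of \cite[Theorem 3.12]{GGB}, which already record that $\Gamma_{nc}(SD_{8n})$ is the complete multipartite graph $K_{n.4,\,1.(4n-4)}$ when $n$ is odd and $K_{(2n).2,\,1.(4n-2)}$ when $n$ is even (cf.\ \eqref{K-a,b}). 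Equivalently, apart from $\langle a\rangle$ there are, for $n$ odd, exactly $n$ distinct centralizers of order $8$, and for $n$ even, exactly $2n$ distinct centralizers of order $4$.

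With this centralizer data \cite[Theorem 2.4]{DN-2018} delivers $\lspec(\Gamma_{nc}(G))$ immediately. For $n$ odd the parts have sizes $4n-4$ and $4$ (the latter with multiplicity $n$), so with $N=8n-4$ one obtains $0^1$, $(8n-4)^{n}$, $(4n)^{4n-5}$ and $(8n-8)^{3n}$; for $n$ even the parts have sizes $4n-2$ and $2$ (the latter with multiplicity $2n$), giving $\{0,(8n-4)^{2n},(4n)^{4n-3},(8n-2)^{2n}\}$, which is precisely $\lspec(\Gamma_{nc}(V_{8n}))$ of Theorem \ref{thm20-21}. For the energy I would use that in a complete multipartite graph $2|e|=\sum_i m_i(N-m_i)$. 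For $n$ odd this gives $2|e|=(4n-4)(4n)+4n(8n-8)=48n(n-1)$ and $\frac{2|e|}{|v|}=\frac{12n(n-1)}{2n-1}$; for $n$ even it gives $2|e|=(4n-2)(4n)+4n(8n-4)=48n^2-24n$ and $\frac{2|e|}{|v|}=\frac{12n(2n-1)}{4n-1}$.

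Finally I would substitute into $LE=\sum_{\mu}\left|\mu-\frac{2|e|}{|v|}\right|$. For $n$ odd the four terms, weighted by the multiplicities $1,n,4n-5,3n$ and placed over the common denominator $2n-1$, combine (after collecting the $n^3$, $n^2$ and $n$ coefficients) to $\frac{8n(4n^2-10n+7)}{2n-1}$; for $n$ even the spectrum, $|v|$ and $2|e|$ all coincide with those in Theorem \ref{thm20-21}, so the very same computation returns $\frac{8n(8n^2-8n+3)}{4n-1}$. The main obstacle is the purely group-theoretic step: confirming that $SD_{8n}$ is an AC-group and that the elements $a^ib$ fall into exactly $n$ centralizers of order $8$ (for $n$ odd) or $2n$ of order $4$ (for $n$ even), which is what fixes the complete multipartite type. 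Once that is settled, the Laplacian spectrum follows from \cite[Theorem 2.4]{DN-2018} and the energy sum is routine, with the $n$-even case reducing verbatim to Theorem \ref{thm20-21}.
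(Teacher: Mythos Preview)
Your proposal is correct and follows essentially the same route as the paper: identify $Z(SD_{8n})$, list the abelian centralizers to see $SD_{8n}$ is an AC-group with the stated centralizer sizes, invoke \cite[Theorem 2.4]{DN-2018} for $\lspec$, read off $|e|$ from the complete multipartite description in \cite[Theorem 3.12]{GGB}, and sum the deviations. Your observation that the $n$-even computation is literally the one already carried out in Theorem \ref{thm20-21} is a tidy shortcut the paper does not make explicit, but otherwise the arguments coincide.
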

\begin{proof}
%Since $SD_{8n}$ be a finite non-abelin CA-group and if n is even, then $Z(SD_{8n})$= $\langle a^{2n} \rangle $  and so $|Z(SD_{8n})|=2$ and if n is odd  then $Z(SD_{8n})$= $\langle a^n \rangle $  and so $|Z(SD_{8n})|=4$ and  $|SD_{8n|}=8n$.\\
We have  
\[
Z(G) = \begin{cases}
\langle a^{2n}\rangle, &\text{ if $n$ is even}\\
\langle a^{n}\rangle, &\text{ if $n$ is odd}
\end{cases} \text{ and so } |v(\Gamma_{nc}(G))| = \begin{cases}
8n - 2, &\text{ if $n$ is even}\\
8n - 4, &\text{ if $n$ is odd},
\end{cases}
\]
since $|v(\Gamma_{nc}(G))|$ $= 8n$ and  $|Z(G)| = \begin{cases}
2, &\text{ if $n$ is even}\\
4, &\text{ if $n$ is odd}.
\end{cases}$ 

(a) We see that 
\begin{align*}
X_0 &=  G\\
X_1 &=  Z(G) \sqcup aZ(G) \sqcup a^2Z(G) \sqcup \cdots \sqcup a^{n - 1}Z(G) \\ 
X_2 &=  Z(G) \sqcup bZ(G)\\
X_3 &=  Z(G) \sqcup abZ(G)\\
& \vdots\\
X_{n + 1}  &=  Z(G) \sqcup a^{n - 1}bZ(G)
\end{align*}
give all the centralizers of $G$ and they are abelian. Therefore,   using    \cite[Theorem 2.4]{DN-2018}, we get
\[
\lspec(\Gamma_{nc}(G)) = \{0, (8n-8)^{3n}, (4n)^{4n-5},  (8n-4)^n\},
\]
since   $|X_1| = 4n$ and $|X_2|=   \cdots = |X_{2n+1}|=8$.

It was shown in the proof of \cite[Theorem 3.12]{GGB} that $\Gamma_{nc}(G) = K_{1.(4n - 4), n.4}$.   Therefore, $2|e(\Gamma_{nc}(G))|= 48n^2 - 48n$. We have

 $\left| 0 - \frac{2|e(\Gamma_{nc}(G))|}{|v(\Gamma_{nc}(G))|} \right|$ =$\frac{12n^2 -12n}{2n-1}$, \quad
${\left| (8n-8) - \frac{2|e(\Gamma_{nc}(G))|}{|v(\Gamma_{nc}(G))|} \right|}$ =$\frac{4n^2 -12n+48}{2n-1}$, \quad 
 ${\left| 4n - \frac{2|e(\Gamma_{nc}(G))|}{|v(\Gamma_{nc}(G))|} \right|}$ = $\frac{4n^2 -8n}{2n-1}$,  and
${\left| (8n-4) - \frac{2|e(\Gamma_{nc}(G))|}{|v(\Gamma_{nc}(G))|} \right|}$ =$\frac{4n^2 -4n+4}{4n-1}$. Therefore,
\[  
LE(\Gamma_{nc}(G)) = \frac{12n^2 -12n}{2n-1} +  \frac{3n(4n^2 -12n+8)}{2n-1}+    \frac{(4n-5)(4n^2 -8n)}{2n-1} +   \frac{n(4n^2 -4n+4)}{2n-1}
\]
and hence the result follows on simplification.

(b) We see that 
\begin{align*}
X_0 &=  G\\
X_1 &=  Z(G) \sqcup aZ(G) \sqcup a^2Z(G) \sqcup \cdots \sqcup a^{2n - 1}Z(G) \\ 
X_2 &=  Z(G) \sqcup bZ(G)\\
X_3 &=  Z(G) \sqcup abZ(G)\\
& \vdots\\
X_{2n + 1}  &=  Z(G) \sqcup a^{2n - 1}bZ(G)
\end{align*}
give all the centralizers of $G$ and they are abelian. Therefore,   using    \cite[Theorem 2.4]{DN-2018}, we get
\[
\lspec(\Gamma_{nc}(G))= \{0,(8n-4)^{2n}, (4n)^{4n-3},  (8n-2)^{2n}\},
\]
since   $|X_1| = 4n$ and $|X_2|=   \cdots = |X_{2n+1}|=4$. 

It was shown in the proof of \cite[Theorem 3.12]{GGB} that $\Gamma_{nc}(G) = K_{1.(4n - 2), (2n).2}$.   Therefore, $2|e(\Gamma_{nc}(G))|= 48n^2 -24n$. We have

$\left| 0 - \frac{2|e(\Gamma_{nc}(G))|}{|v(\Gamma_{nc}(G))|} \right| = \frac{24n^2 -12n}{4n-1}$, \quad 
${\left| (8n-4) - \frac{2|e(\Gamma_{nc}(G))|}{|v(\Gamma_{nc}(G))|} \right|} = \frac{8n^2 -12n+4}{4n-1}$, 
\quad 
${\left| 4n - \frac{2|e(\Gamma_{nc}(G))|}{|v(\Gamma_{nc}(G))|} \right|} = \frac{8n^2 -8n}{4n-1}$, 
and  
${\left| (8n-2) - \frac{2|e(\Gamma_{nc}(G))|}{|v(\Gamma_{nc}(G))|} \right|}$ =$\frac{8n^2 -4n+2}{4n-1}$. 
Therefore, 
\[
LE(\Gamma_{nc}(G)) = \frac{24n^2 -12n}{4n-1} +  \frac{2n(8n^2 -12n+4)}{4n-1} +  \frac{(4n-3)(8n^2 -8n)}{4n-1} +  \frac{2n(8n^2 -4n+2)}{4n-1} 
\]    
and the result follows on simplification.
\end{proof}

\begin{theorem}
If $G = V_{8n}$ (for odd $n$) or $SD_{8n}$ then $E(\Gamma_{nc}(G)) \leq LE(\Gamma_{nc}(G))$.
\end{theorem}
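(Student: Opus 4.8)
The plan is to reduce the statement to a concrete algebraic inequality in the single parameter $n$, using the closed forms already in hand. By Theorem \ref{thm3} the energy $E(\Gamma_{nc}(G))$ is known explicitly in each case, while Theorem \ref{thm20-21} and Theorem \ref{thm23} supply $LE(\Gamma_{nc}(G))$. There are three cases to treat: $V_{8n}$ with $n$ odd, $SD_{8n}$ with $n$ even, and $SD_{8n}$ with $n$ odd. The first two share identical formulae, namely $E = 2(2n-1) + 2\sqrt{(2n-1)(10n-1)}$ and $LE = \frac{8n(8n^2-8n+3)}{4n-1}$, so only two genuinely distinct inequalities arise; the third, for $SD_{8n}$ with $n$ odd, uses $E = 2(n-1) + 4\sqrt{(n-1)(5n-1)}$ and $LE = \frac{8n(4n^2-10n+7)}{2n-1}$.

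In each case I would isolate the radical. Since $E$ has the shape (a linear polynomial in $n$) plus (a positive multiple of a square root), whereas $LE$ is rational in $n$, the inequality $E \le LE$ is equivalent to $c\sqrt{R(n)} \le LE - L(n)$, where $L(n)$ is the polynomial part of $E$, $c \in \{2,4\}$, and $R(n)$ is the quadratic under the root. First I would check that the right-hand side $LE - L(n)$ is nonnegative on the admissible range; clearing the denominator $4n-1$ (respectively $2n-1$) turns $LE - L(n)$ into a cubic with positive leading coefficient whose nonnegativity is immediate, and which is in any case forced by the fact that $LE$ grows like $16n^2$ while $L(n)$ is linear. With both sides nonnegative I would then square, clearing the denominator, to obtain an equivalent polynomial inequality $P(n) \ge 0$ of degree six with positive leading coefficient.

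The essential content is that $P(n) \ge 0$ for every admissible $n$, and this is the only place where genuine care is needed. Asymptotically the inequality is very comfortable, because $LE$ grows quadratically in $n$ while $E$ grows only linearly, so $P$ is dominated by its top-degree term for large $n$. The delicate region is the lower boundary: for $V_{8n}$ with $n=1$ (the group $V_8$) one computes $E = LE = 8$, so equality holds and $P$ has a root at $n=1$. I would therefore factor $(n-1)$ out of $P$ in that case and verify that the complementary quintic stays strictly positive for $n \ge 1$, which settles the $V_{8n}$ and the $SD_{8n}$-even inequalities simultaneously; the $SD_{8n}$-odd inequality is handled identically, checking positivity of the corresponding $P$ for the relevant values (here $n \ge 3$, since $SD_{8n}$ is non-abelian only for $n \ge 2$). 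The main obstacle, such as it is, is purely bookkeeping: carrying out the squaring and the degree-six expansion accurately and confirming the sign of the resulting polynomial near the boundary, rather than any conceptual difficulty.
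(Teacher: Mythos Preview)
Your proposal is correct and follows essentially the same route as the paper: in each case one isolates the square root, checks the rational side is nonnegative, squares, and reduces to a polynomial inequality in $n$. The paper carries out the verification of that polynomial inequality by a forward chain of elementary estimates (starting from an explicit degree-four positivity and adding positive terms until the squared inequality emerges), whereas you propose to expand $P(n)$ directly, factor out the root at $n=1$ in the $V_{8n}$/$SD_{8n}$-even case, and check the residual quintic; this is a cosmetic difference in bookkeeping rather than a different method.
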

\begin{proof}  
For any positive integer $n$ we have $64n^4 - 80 n^3 + 152n^2 -82n +4 \geq 0$ and so
\begin{align*}
20n^2 -12n +1 &\leq 64n^4 - 80 n^3 + 172n^2 -94n +5 \\
              & = 4n^2(16n^2 - 20 n) + 172n^2 -94n +5\\ 
              & = 4n^2(4n-5)^2 + (4n-5)(18n-1). %\\ 
%16n^2(20n^2 -12n +1) & \leq 16n^2(4n^2(4n-5)^2 + (4n-5)(18n-1)).  
\end{align*}
Since $16n^2$ and $(18n-1)^2$ are positive integers, we have   
\[
16n^2(20n^2 -12n +1) \leq 16n^2(4n^2(4n-5)^2 + (4n-5)(18n-1)) +(18n-1)^2.
\]
Therefore,
%and since $-(8n-1)(20n^2 -12n +1)$ negative then , 
\begin{align*}
16n^2(20n^2 -12n +1) -(8n-1)(20n^2 &-12n +1) \\
& \leq 16n^2(4n^2(4n-5)^2 + (4n-5)(18n-1)) +(18n-1)^2 \\
%(16n^2 -8n-1)(20n^2 -12n +1) &\leq (8n^2(4n-5) + (18n-1)))^2 \\
\Rightarrow(4n-1)^2(20n^2 -12n +1) &\leq (8n^2(4n-5) + (18n-1)))^2\\
\Rightarrow20n^2 -12n +1 &\leq \frac{(8n^2(4n-5) + (18n-1)))^2}{(4n-1)^2}\\
\Rightarrow\sqrt{20n^2 -12n +1} &\leq \frac{8n^2(4n-5) + (18n-1))}{4n-1}\\
%2(4n-1)\sqrt{20n^2 -12n +1} &\leq 2(8n^2(4n-5) + (18n-1))\\ 
\Rightarrow2\sqrt{20n^2 -12n +1} &\leq \frac{64n^3- 64n^2 +24n -16n^2 +12n -2}{(4n-1)}\\ 
                      & = \frac{8n(8n^2-8n+3) -(4n-1)(4n-2)}{(4n-1)}. 
\end{align*}
Thus
\begin{equation}\label{E-LE-comp-V8n}
2\sqrt{20n^2 -12n +1} + 2(2n - 1) \leq \frac{8n(8n^2-8n+3)}{(4n-1)}.
\end{equation}
Hence, by Theorem \ref{thm3} and    Theorem \ref{thm20-21}, we have
\[
E(\Gamma_{nc}(V_{8n})) \leq LE(\Gamma_{nc}(V_{8n}))
\]
if $n$ is odd.

Suppose that $G = SD_{8n}$. If $n$ is even then by \eqref{E-LE-comp-V8n}, Theorem \ref{thm3}(b) and Theorem \ref{thm23}(b) we have
\[
E(\Gamma_{nc}(SD_{8n})) \leq LE(\Gamma_{nc}(SD_{8n})).
\]
If $n$ is odd then we have $64n^6 - 336n^5 + 668n^4 - 612n^3 + 225n^2 - 4n - 1 > 0$. Therefore,
\begin{align*}
20n^4 - 44n^3 + & 33n^2 - 10n + 1 \\
& < 64n^6 - 336n^5 + 688n^4 - 656n^3 + 258n^2 - 14n\\
& = 4n^2(16n^4 - 80n^3 + 162n^2 - 142n + 49) +
2n(20n^3 - 44n^2 + 31n - 7 - 8n^4)\\
& = 4n^2((4n^2 - 10n)^2+14(4n^2 - 10n) + 49 + 6n^2 - 2n) +
2n(20n^3 - 44n^2 + 31n - 7 - 8n^4)\\
& = 4n^2((4n^2 - 10n + 7)^2 + 2n(3n - 1)) + 2n((10n - 7)(2n^2 - 3n + 1) - 8n^4)\\
& = 4n^2(4n^2 - 10n + 7)^2 + 8n^3(3n - 1) - 16n^5 + 2n(10n - 7)(2n^2 - 3n + 1)\\
& = 4n^2(4n^2 - 10n + 7)^2 - 8n^3(2n^2 - 3n + 1) + 2n(10n - 7)(2n^2 - 3n + 1)\\
& = 4n^2(4n^2 - 10n + 7)^2 - 2n(4n^2 - 10n + 7)(2n^2 - 3n + 1)\\
& = 4n^2(4n^2 - 10n + 7)^2 - 2n(4n^2 - 10n + 7)(2n - 1)(n - 1).
\end{align*}
Since $20n^4 - 44n^3 +  33n^2 - 10n + 1 = (5n - 1)(n - 1)(2n - 1)^2$ we have
\[
(5n - 1)(n - 1)(2n - 1)^2 < 4n^2(4n^2 - 10n + 7)^2 - 2n(4n^2 - 10n + 7)(2n - 1)(n - 1).
\] 
Multiplying both sides by $4$ and then adding $(2n - 1)^2(n - 1)^2$ in the right side of the above inequality we get
\begin{align*}
4(5n - 1)(n - 1)(2n - 1)^2 < \left(4n(4n^2 - 10n + 7) - (2n - 1)(n - 1)\right)^2. 
\end{align*}
Therefore,
\[
2\sqrt{(5n - 1)(n - 1)}(2n - 1) < 4n(4n^2 - 10n + 7) - (2n - 1)(n - 1)
\]
which gives 
\[
(n - 1) + 2\sqrt{(5n - 1)(n - 1)} < \frac{4n(4n^2 - 10n + 7)}{(2n - 1)}.
\]
Hence, the result follows from Theorem \ref{thm3}(b) and Theorem \ref{thm23}(a).
\end{proof}

\begin{theorem}\label{compare-QD}
If $QD_{2^n} := \langle a, b : a^{2n-1} = b ^2 =1,  bab^{-1} = a^{2^{n-2}-1}\rangle$, where $n \geq 4$, then $E(\Gamma_{nc}(QD_{2^n})) \leq LE(\Gamma_{nc}(QD_{2^n}))$.
\end{theorem}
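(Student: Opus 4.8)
The plan is to follow exactly the template established in the preceding two comparison theorems: reduce the statement $E(\Gamma_{nc}(QD_{2^n})) \leq LE(\Gamma_{nc}(QD_{2^n}))$ to a single polynomial inequality in the parameter governing $QD_{2^n}$, and then verify that inequality by algebraic manipulation. The first step is to identify the group structure. The quasidihedral (semidihedral) group $QD_{2^n}$ has order $2^n$, and one checks that its center and centralizer structure make it an AC-group whose central quotient is dihedral; indeed $QD_{2^n}/Z(QD_{2^n}) \cong D_{2m}$ for an appropriate $m$ (a power of $2$). Thus I would first determine $Z(QD_{2^n})$ explicitly, compute $|Z(QD_{2^n})|$ and the resulting value of $m$, and then invoke Theorem \ref{thm10} to write down $\spec(\Gamma_{nc}(QD_{2^n}))$ and hence
\[
E(\Gamma_{nc}(QD_{2^n})) = n'\left((m-1) + \sqrt{(m-1)(5m-1)}\right)
\]
with $n' = |Z(QD_{2^n})|$ and the appropriate $m$.

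Next I would obtain $LE(\Gamma_{nc}(QD_{2^n}))$. Since $QD_{2^n}$ is an AC-group, the Laplacian spectrum comes from \cite[Theorem 2.4]{DN-2018} just as in Theorems \ref{thm20-21} and \ref{thm23}: list the distinct centralizers, record their sizes, and read off $\lspec(\Gamma_{nc}(QD_{2^n}))$. From the complete multipartite description of $\Gamma_{nc}(QD_{2^n})$ I would compute $2|e|/|v|$ and then sum $|\mu - 2|e|/|v||$ over the Laplacian eigenvalues to get a closed-form rational expression for $LE(\Gamma_{nc}(QD_{2^n}))$. With both $E$ and $LE$ in hand, the desired inequality becomes, after substituting the concrete values of $m$ and $n'$ as powers of two, a numerical inequality of the shape
\[
(m-1) + \sqrt{(m-1)(5m-1)} \leq \frac{(\text{polynomial in } 2^n)}{(\text{linear in } 2^n)},
\]
the same form as inequality \eqref{E-LE-comp-V8n} in the $V_{8n}$ case.

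The main obstacle, and the heart of the proof, is isolating the square-root term and squaring cleanly. As in the previous theorem, I expect the verification to hinge on exhibiting a nonnegative polynomial: I would move all rational terms to one side, bound $\sqrt{(m-1)(5m-1)}$ by isolating it, square both sides (legitimate once I check both sides are positive for $n \geq 4$), and reduce to showing that some explicit polynomial in $2^n$ is nonnegative. The delicate bookkeeping will be rewriting the difference of squares so that it factors as a perfect square plus manifestly nonnegative lower-order terms, mirroring the identity $4n^2(4n-5)^2 + (4n-5)(18n-1)$ that drove the $V_{8n}$ computation. The hard part will therefore not be conceptual but arithmetic: tracking the powers of $2$ carefully and completing the square so that the resulting polynomial inequality is transparent for all $n \geq 4$. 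Once that single polynomial is shown to be nonnegative, the chain of implications (dividing by the positive denominator, taking square roots) runs backward to yield $E(\Gamma_{nc}(QD_{2^n})) \leq LE(\Gamma_{nc}(QD_{2^n}))$, and the proof concludes by citing Theorem \ref{thm10} together with the Laplacian computation performed above.
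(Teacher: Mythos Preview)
Your plan is correct and follows essentially the same template as the paper: reduce the comparison $E \leq LE$ to a single inequality of the shape
\[
(2^{n-1}-2) + 2\sqrt{(2^{n-2}-1)(5\cdot 2^{n-2}-1)} \leq \frac{2(2^{3n-4}-2^{2n-1}+3\cdot 2^{n-1})}{2^{n-1}-1},
\]
then verify it by isolating the square root, squaring, and exhibiting a nonnegative polynomial in $2^n$.

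The only difference is in how the two sides are sourced. You propose to derive $E(\Gamma_{nc}(QD_{2^n}))$ from Theorem~\ref{thm10} (using $QD_{2^n}/Z(QD_{2^n})\cong D_{2^{n-1}}$ with $|Z(QD_{2^n})|=2$) and to recompute $LE$ from the centralizer structure via \cite[Theorem 2.4]{DN-2018}. The paper instead simply quotes the ready-made formulas \cite[Theorem 3.4]{GG} and \cite[Proposition 3.2]{DN} and lands on exactly the same inequality; the polynomial bookkeeping it carries out is precisely the ``hard arithmetic part'' you anticipate. Your route is a bit more self-contained but otherwise identical in substance.

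One remark worth making: your structural observation that $QD_{2^n}/Z(QD_{2^n})\cong D_{2m}$ with $m=2^{n-2}$ actually shows that this theorem is a special case of Theorem~\ref{compare-2D2m}. So once Theorem~\ref{compare-2D2m} is in hand, the present statement follows in one line, and the lengthy power-of-two manipulation (both yours and the paper's) can be bypassed entirely.
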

\begin{proof}
%If $G = QD_{2^n}$ then, by \cite[Theorem 3.4]{GG}, we have  
%\[
%E(\Gamma_{nc}(QD_{2^n})) = 2^{n-1}-2 + 2\sqrt {(5\times 2^{n-2} -1)(2^{n-2}-1)}.
%\]
%.........................................
For all $n \geq 4$ we have $(2^{2n-3} - 2^n+3)^2 - 2^{3n-4} - 1 > 0$, $2^{2n-2} + 2^{2n-3} - 5.2^{2n-4} > 0$ and  $6.2^{n-2} - 2^{n-1} > 0$. Therefore, 

%\begin{allign*}
%For all $n \geq 4$ next equation is true,\\
%
%$0 < 2^{4n-6} +2^{2n+1} +2^{2n-3} +2^{n}-2^{3n-2}-2^{3n-4}-5.2^{2n-4}-3.2^{n+1}+8$, \\
%
%$0 < 2^{4n-6}+2^{2n} +2^{2n} +2^{2n-3} +2^{n}-2^{3n-2}-2^{3n-4}-5.2^{2n-4}-3.2^{n+1}+8$, \\
%
%$0 < 2^{4n-6} -2^{3n-2}+ 4.2^{2n-2} +2^{2n}-3.2^{n+1}+9 +2^{2n-3} -2^{n-1}-2^{3n-4}-5.2^{2n-4} +3.2^{n-1}-1$, \\
%
%$0 < 2^{4n-6} -2^{3n-2}+ 3.2^{2n-2} +2^{2n}-3.2^{n+1}+9 +2^{2n-2} +2^{2n-3} -2^{n-1}-2^{3n-4}-5.2^{2n-4} +3.2^{n-1}-1$, \\
%
%$0 < 2^{4n-6} -2.2^{2n-3}(2^n -3)+ (2^n -3)^2 +2^{2n-2} +2^{2n-3} -2^{n-1}-2^{3n-4}-5.2^{2n-4} +3.2^{n-1}-1$, \\
%
%$0 <(2^{2n-3} - 2^n+3)^2 + 2^{2n-2} + 2^{2n-3}-2^{n-1} -2^{3n-4}- 5.2^{2n-4}+ 3.2^{n-1} -1$, \\

\[
(2^{2n-3} - 2^n+3)^2 + 2^{2n-2} + 2^{2n-3}-2^{n-1} -2^{3n-4}- 5.2^{2n-4}+ 6.2^{n-2} -1 > 0
\]
and so 
\[
5.2^{2n-4}- 6.2^{n-2} +1 < (2^{2n-3} - 2^n+3)^2 + 2^{2n-2} + 2^{2n-3}-2^{n-1} -2^{3n-4}.
\]
Multiplying both sides by $2^{2n-2}$ we get
\begin{align*}
2^{2n-2}(5.2^{2n-4}- 6.2^{n-2} +1) & <
%2^{2n-2}((2^{2n-3} - 2^n+3)^2 + 2^{2n-2} + 2^{2n-3}-2^{n-1} -2^{3n-4})\\
 (2^{3n-4}-2^{2n-1} + 3.2^{n-1})^2 - 2.2^{3n-4}(2^{2n-3} - 2^{n-2}-2^{n-1} + 1) \\
& = (2^{3n-4}-2^{2n-1} + 3.2^{n-1})^2 - 2.2^{3n-4}(2^{n-2}-1)(2^{n-1}-1).
\end{align*}
Since $(1-2^n)( 5.2^{2n-4}- 6.2^{n-2} +1) < 0$ and $2.(2^{2n - 1} - 3.2^{n - 1})(2^{n-2}-1)(2^{n-1}-1) > 0$ we have
\begin{align*}
&2^{2n-2}(5.2^{2n-4}- 6.2^{n-2} +1) + (1-2^n)(5.2^{2n-4}- 6.2^{n-2} +1) \\
& ~~~~~~~~~~~~~~~~~~~~~~~~~~ < (2^{3n-4}-2^{2n-1} + 3.2^{n-1})^2 - 2.2^{3n-4}(2^{n-2}-1)(2^{n-1}-1) \\
&~~~~~~~~~~~~~~~~~~~~~~~~~~~~~~~~~~~~~~~~~~~~~~~~~~ + 2.(2^{2n - 1} - 3.2^{n - 1})(2^{n-2}-1)(2^{n-1}-1)\\
& ~~~~~~~~~~~~~~~~~~~~~~~~~~ = (2^{3n-4}-2^{2n-1} + 3.2^{n-1})^2  - 2(2^{3n-4} - 2^{2n - 1} + 3.2^{n - 1} )(2^{n-2}-1)(2^{n-1}-1)\\
& ~~~~~~~~~~~~~~~~~~~~~~~~~~ < (2^{3n-4}-2^{2n-1} + 3.2^{n-1})^2  - 2(2^{3n-4} - 2^{2n - 1} + 3.2^{n - 1} )(2^{n-2}-1)(2^{n-1}-1)\\
&~~~~~~~~~~~~~~~~~~~~~~~~~~~~~~~~~~~~~~~~~~~~~~~~~~ + (2^{n-2}-1)^2(2^{n-1}-1)^2\\
& ~~~~~~~~~~~~~~~~~~~~~~~~~~ = (2^{3n-4}-2^{2n-1} + 3.2^{n-1} - (2^{n-2}-1)(2^{n-1}-1))^2.
\end{align*}
Therefore,
\[
(2^{n-1}-1)^2( 5.2^{n-2}-1)( 2^{n-2} -1) 
 < (2^{3n-4}-2^{2n-1} + 3.2^{n-1} - (2^{n-2}-1)(2^{n-1}-1))^2
\]
noting that 
\[
2^{2n-2}((5.2^{2n-4}- 6.2^{n-2} +1)) + (1-2^n)(5.2^{2n-4}- 6.2^{n-2} +1) = (2^{n-1}-1)^2( 5.2^{n-2}-1)( 2^{n-2} -1).
\]
Hence,
\begin{align*}
( 5.2^{n-2}-1)( 2^{n-2} -1) 
 & < \left(\frac{2^{3n-4}-2^{2n-1} + 3.2^{n-1} - (2^{n-2}-1)(2^{n-1}-1)}{(2^{n-1}-1)}\right)^2\\
 & = \left(\frac{2^{3n-4}-2^{2n-1} + 3.2^{n-1}}{(2^{n-1}-1)}  - (2^{n-2}-1) \right)^2.
\end{align*}
Thus
\[
\sqrt{( 5.2^{n-2}-1)( 2^{n-2} -1)} < \frac{2^{3n-4}-2^{2n-1} + 3.2^{n-1}}{(2^{n-1}-1)}  - (2^{n-2}-1)
\]
and so 
\[
(2^{n-1}-2)+2\sqrt{( 5.2^{n-2}-1)( 2^{n-2} -1)} <  \frac{2( 2^{3n-4}-2^{2n-1} + 3.2^{n-1})  }{(2^{n-1}-1)}.
\]
Hence, the result follows from \cite[Theorem 3.4]{GG} and \cite[Proposition 3.2]{DN}.
\end{proof}
Now, we compare energy and Laplacian energy of the Frobenious group $F_{p, q}$. 
%$ = \langle a,b : a^p = b^q = 1, b^{-1}ba = a^u\rangle$ of order $pq$, where $p$ and $q$ are primes with $q\mid(p - 1)$ and $u$ is an integer such that $\bar{u} \in {\mathbb{Z}}_p^{*}$ having order $q$. It is worth mentioning that the spectrum of non-commuting graph of $F_{q,p}$ was  computed in   \cite[Theorem 3.13]{GGB}. However, there is a typo in  \cite[Theorem 3.13]{GGB}.  The correct expression for   spectrum of non-commuting graph of $F_{q,p}$  is given  by
%\[
%\spec(\Gamma_{nc}(F_{p, q}))= \left\lbrace 0^{pq-p-2}, (-(q-1))^{p-1}, \left(\frac{\alpha \pm \sqrt{\alpha^2 +4p\alpha}}{2}\right)^1\right\rbrace,
%\]
%where $\alpha = (p-1)(q-1)$. Therefore,
%\begin{equation}\label{thm11}
%E(\Gamma_{nc}(F_{p, q}))= \alpha + \sqrt{\alpha^2 +4p\alpha},
%\end{equation}
%where $\alpha = (p-1)(q-1)$.
In  \cite[Proposition 3.1]{DN}, $LE(\Gamma_{nc}(F_{p, q}))$ was computed incorrectly. %However,  the expression for $LE(\Gamma_{nc}(F_{p, q}))$ given in  \cite[Proposition 3.1]{DN} is not correct and the correct version is given below
The correct expression for $LE(\Gamma_{nc}(F_{p, q}))$ is given below
\begin{equation}\label{corrected-Proposition 3.1-DN}
LE(\Gamma_{nc}(F_{p, q})) =  \frac{2p^2\alpha+2p(q-1)^2}{pq-1},
%\frac{2p(q-1)(p(p-1)+(q-1))}{pq-1} =  
\end{equation}
where $\alpha = (p-1)(q-1)$.

\begin{theorem}\label{compare-pq}
If $p$ and $q$ are two primes such that $q\mid (p - 1)$ and  $q^2 <  q + p + 1$     then $E(\Gamma_{nc}(F_{p, q})) \leq LE(\Gamma_{nc}(F_{p, q}))$.
\end{theorem}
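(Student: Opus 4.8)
The plan is to substitute the two closed forms already available in the excerpt. By \eqref{thm11} we have $E(\Gamma_{nc}(F_{p,q})) = \alpha + \sqrt{\alpha^2 + 4p\alpha}$, and by \eqref{corrected-Proposition 3.1-DN} we have $LE(\Gamma_{nc}(F_{p,q})) = \frac{2p^2\alpha + 2p(q-1)^2}{pq-1}$, where $\alpha = (p-1)(q-1)$. So the target inequality $E \le LE$ reads
\[
\alpha + \sqrt{\alpha^2 + 4p\alpha} \;\le\; \frac{2p^2\alpha + 2p(q-1)^2}{pq-1}.
\]
The natural first move is to isolate the square root and square. For that I first record that $LE - \alpha > 0$: writing $LE - \alpha = \frac{(q-1)\left(2p\beta - (p-1)(pq-1)\right)}{pq-1}$ with $\beta = p^2 - p + q - 1$, the inner bracket equals $2p^3 - (q+2)p^2 + 3pq - p - 1$, which is positive because $q \le p-1$ forces $(q+2)p^2 < 2p^3$ while the remaining terms are positive. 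Hence squaring is legitimate.

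Squaring and rearranging turns $E \le LE$ into $2\alpha(2p + LE) \le LE^2$. Multiplying by $(pq-1)^2$ and dividing by the positive quantity $4p(q-1)$, this is equivalent to the polynomial inequality
\[
(p-1)(pq-1)\,\delta \;\le\; p(q-1)\,\beta^2, \qquad \delta = p^2(q-1) + p + q^2 - 2q,
\]
so it suffices to prove $F(p,q) := p(q-1)\beta^2 - (p-1)(pq-1)\delta \ge 0$.

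The cleanest way I see to finish is to factor $F$. A direct check gives $F(q,q) = 0$, so $(p-q)$ divides $F$; dividing it out yields
\[
F(p,q) = (p-q)\left[(q-1)p^3(p-2) + q(q-2)p(p-2) + (q-2)\right].
\]
Since $q \mid p-1$ forces $q \le p-1$, we have $p-q \ge 1 > 0$, and for every admissible pair (so $p \ge 3$ and $p > q \ge 2$) each summand in the bracket is nonnegative. Thus $F \ge 0$, which settles $E \le LE$. The hypothesis $q^2 < q+p+1$, equivalently $p > q^2 - q - 1$, is precisely the enabling positive quantity one needs if, instead of factoring, one bounds $\sqrt{\alpha^2+4p\alpha}$ above by a rational function of $p,q$ and verifies the resulting estimate through the kind of complete-the-square chain used for $V_{8n}$, $SD_{8n}$, and $QD_{2^n}$ in the preceding theorems.

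The main obstacle is purely computational: expanding $F$ into a degree-six polynomial in $p$ and $q$ and recognising the factor $p-q$ (equivalently, carrying out the long square-completion chain that the hypothesis feeds). Once the factorisation is in hand the positivity is immediate, so the real work lies in the bookkeeping of the reduction from $E \le LE$ to $F \ge 0$ and in checking $LE - \alpha > 0$ so that the squaring step is reversible.
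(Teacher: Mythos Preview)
Your reduction of $E \le LE$ to $F(p,q) := p(q-1)\beta^2 - (p-1)(pq-1)\delta \ge 0$ is correct, the check that $LE - \alpha > 0$ (so the squaring step is legitimate) is sound, and the factorization
\[
F(p,q) = (p-q)\bigl[(q-1)p^3(p-2) + q(q-2)p(p-2) + (q-2)\bigr]
\]
is valid; it can be verified by direct expansion of both sides. Since $q \mid p-1$ forces $p > q \ge 2$ and $p \ge 3$, every factor is nonnegative and your argument is complete.

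This is a genuinely different route from the paper's. The paper never factors $F$; it uses the hypothesis $q^2 < q+p+1$ to seed a chain of polynomial inequalities, repeatedly enlarging the right-hand side by positive quantities until the target bound drops out. What your factorization buys is more: you establish $E(\Gamma_{nc}(F_{p,q})) \le LE(\Gamma_{nc}(F_{p,q}))$ for \emph{every} pair with $q \mid p-1$, without invoking $q^2 < q+p+1$ at all. This is strictly stronger than the stated theorem, and it is in direct tension with the Remark that follows Theorem~\ref{compare-pq}, which asserts $E > LE$ whenever $q^2 > q+p+1$ and uses this to manufacture counterexamples to \eqref{conjecture 1}. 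Your factorization shows that claim cannot hold, and a direct numerical check (e.g.\ $p=53$, $q=13$ gives $E \approx 1346 < 5118 \approx LE$) confirms it; the paper's chain of estimates is not reversible because of the added positive terms, so its proof does not actually support that Remark.
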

\begin{proof}
Let $p = qt + 1$ for some integer $t \geq 1$. Then
\begin{align*}
pq - q^2 - p + 1  & = pq - (p - 1) - q^2\\
& = q(p - t - q) = q(q - 1)(t - 1) \geq 0
\end{align*}
and 
\begin{align*}
p^4q -p^3q^2 -p^4+p^3+2p^2-p^2q & = p^2(p^2(q - 1) - p(q^2 - 1) - q + 2) \\
& = p^2(q - 1)\left(p^2 - p(q + 1) - 1 + \frac{1}{q - 1}\right)\\
& = p^2(q - 1)\left(p(p - 1) -pq - 1 +  \frac{1}{q - 1}\right)\\
& = p^2(q - 1)\left(pq(t - 1)   - 1 +  \frac{1}{q - 1}\right) \geq 0
\end{align*}
noting that $t = 1$ if and only if $p = 3$ and $q = 2$. Since
$q^2 < q + p + 1$ we have
%$0 $ $<$  $q+p+1-q^2$,\\
%  
%  $0 $ $<$  $q+2p-p+1-q^2$,\\
\[
pq - 2p - q <  pq - q^2 - p + 1  \leq p^2(pq - q^2 - p + 1)
\]
and so
\begin{align*}
& -q < p^3q - p^2q^2 -p^3 + p^2 + 2p - pq\\
\Rightarrow & -pq < p^4q - p^3q^2 - p^4 + p^3 + 2p^2 - p^2q\\
\Rightarrow & -pq +1 < p^4q - p^3q^2 - p^4 + p^3 + 2p^2 - p^2q.
\end{align*}
%  
%  $pq-2p-q $  $<$  $p^2(pq -q^2 -p+1)$, \\
% 
%  $-q $  $<$  $p^3q -p^2q^2 -p^3+p^2+2p-pq $, \\
%  
%   $-pq $  $<$  $p^4q -p^3q^2 -p^4+p^3+2p^2-p^2q $, \\
%   
%   $-pq +1  $  $<$  $p^4q -p^3q^2 -p^4+p^3+2p^2-p^2q $, \\
Adding $p^2q^2 - pq$ on both sides we get
%$p^2q^2-pq -pq  +1  $  $<$  $ p^2q^2+ p^4q -p^3q^2 -p^4+p^3+2p^2-p^2q -pq +p^3q -p^3q  $, \\
\begin{align*}
(pq - 1)^2 & <  (pq - p - q + 1)(p^3 - p^2q) + 2p^2 - pq\\
& = (p - 1)(q - 1)(p^3 - p^2q) + 2p^2 - pq = \alpha(p^3 - p^2q) + 2p^2 - pq,
\end{align*} 
where $\alpha = (p - 1)(q - 1)$. 
%    $p^2q^2-2pq  +1  $  $<$  $ (pq-p-q+1)(p^3 -p^2q) +2p^2 -pq $, \\
%    
%    $(pq-1)^2 $  $<$  $ (p-1)(q-1)(p^3 -p^2q) +2p^2 -pq $, \\
Since $\alpha(p^3 - p^2q) + 2p^2 - pq > 0$ and $(q^2 - 2q)(2p^2 - pq) + p\alpha  + (q - 1)^2 > 0$ we have
\begin{align*}
(pq - 1)^2  & <  \alpha(p^3 - p^2q) + 2p^2 - pq  + (q^2 - 2q)(2p^2 - pq) + p \alpha + (q - 1)^2\\
& = \alpha(p^3 - p^2q + p) + (q^2 - 2q + 1)(2p^2 - pq) + (q - 1)^2\\
& = \alpha (p^3 -p(pq-1))+(q-1)^2(2p^2-pq+1).
\end{align*}    
% $(pq-1)^2 $  $<$ $ \alpha(p^3 -p^2q+p )+(q^2 -2q+1)(2p^2-pq)+ (q-1)^2 $, \\
%    
%    $(pq-1)^2 $  $<$  $ \alpha (p^3 -p^2q+p )+(q-1)^2(2p^2-pq)+ (q-1)^2 $, \\
%    
%    $(pq-1)^2 $  $<$  $ \alpha (p^3 -p(pq-1))+(q-1)^2(2p^2-pq+1)$, \\
Therefore,    
\begin{align*}
\alpha(pq - 1)^2   & < \alpha^2(p^3 - p(pq - 1)) + \alpha(q - 1)^2(2p^2 - (pq - 1)) \\
& = p^3 \alpha^2 + 2p^2 \alpha (q - 1)^2  - p(pq - 1)\alpha^2- \alpha(pq - 1) (q - 1)^2\\
& = p^3 \alpha^2 + 2p^2 \alpha (q-1)^2 + p(q-1)^4 -p (pq-1)\alpha^2- \alpha(pq-1) (q-1)^2,
\end{align*} 
since $p(q - 1)^4 > 0$.  Now, multiplying both sides by $\frac{4p}{(pq - 1)^2}$, we get 
%
%we have
%    
%    $\alpha (pq-1)^2 $  $<$  $ p^3 \alpha^2 + 2p^2 \alpha (q-1)^2 + p(q-1)^4 -p (pq-1)\alpha^2- \alpha(pq-1) (q-1)^2$, \\
\begin{align*}   
& 4p\alpha  <  \frac{4p^4\alpha^2 + 8p^3 \alpha (q - 1)^2 + 4p^2(q - 1)^4 - 4p^2 (pq - 1)\alpha^2 - 4p \alpha(pq - 1)(q - 1)^2}{(pq - 1)^2} \\
\Rightarrow & 4p\alpha + \alpha^2 <  \frac{4p^4 \alpha^2 + 8p^3 \alpha (q - 1)^2 + 4p^2(q - 1)^4 - 4p^2 (pq - 1)\alpha^2 - 4p \alpha(pq - 1)(q - 1)^2}{(pq - 1)^2} + \alpha^2\\
& ~~~~~~~~~~~~  = \left(\frac{2p^2\alpha + 2p(q - 1)^2}{pq - 1} - \alpha\right)^2.
\end{align*}     
%    $4p\alpha +\alpha^2 $  $<$   $ ( \frac{2p^2 \alpha+ 2p(q-1)^2}{pq-1} - \alpha )^2$, \\
Therefore,
\[
\sqrt{4p\alpha +\alpha^2} <  \frac{2p^2 \alpha+ 2p(q-1)^2}{pq-1} - \alpha.
\]   
%    $ \alpha^2+ \sqrt{4p\alpha +\alpha^2} $  $<$   $  \frac{2p^2 \alpha+ 2p(q-1)^2}{pq-1} $, \\
%        \end{allign*}
Hence, the result follows from   \eqref{thm11} and   \eqref{corrected-Proposition 3.1-DN}.
\end{proof}
\begin{remark}
The proof of Theorem \ref{compare-pq} also shows that  $E(\Gamma_{nc}(F_{p, q})) > LE(\Gamma_{nc}(F_{p, q}))$ if  $q\mid (p - 1)$ and  $q^2 >  q + p + 1$. Thus, \eqref{conjecture 1} does not hold for $\Gamma_{nc}(F_{p, q})$ if $q^2 >  q + p + 1$. For example, if $p = 43, q = 7$; $p = 53, q = 13$; $p = 67, q = 11$; $p = 89, q = 11$ etc. then the non-commuting graph of the group $F_{p, q}$ does not satisfy \eqref{conjecture 1}. This gives a new family of counter examples for the conjecture given in  \eqref{conjecture 1}, viz. $K_{1.(p - 1), p.(q - 1)}$ where $p$ and $q$ are two primes such that $q\mid (p - 1)$ and  $q^2 >  q + p + 1$. 
\end{remark}

\begin{theorem}\label{compare-2}
Let $G$ be a finite group with center $Z(G)$. If $\frac{G}{Z(G)} \cong {\mathbb{Z}}_p \times {\mathbb{Z}}_p$ 
%or $S_z(2)$ 
then $E(\Gamma_{nc}(G)) \leq LE(\Gamma_{nc}(G))$. 
%In particular, if $G \cong S_z(2)$ then $E(\Gamma_{nc}(G)) \leq LE(\Gamma_{nc}(G))$.
\end{theorem}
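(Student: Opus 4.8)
The plan is to show that $\Gamma_{nc}(G)$ is a \emph{regular} graph and then invoke the elementary fact that energy and Laplacian energy coincide for regular graphs. Writing $n = |Z(G)|$, the hypothesis $\frac{G}{Z(G)} \cong {\mathbb{Z}}_p \times {\mathbb{Z}}_p$ makes $G$ an AC-group having exactly $p+1$ distinct centralizers of non-central elements, one for each of the $p+1$ subgroups of order $p$ in ${\mathbb{Z}}_p \times {\mathbb{Z}}_p$, each such centralizer having order $pn$. Consequently $\Gamma_{nc}(G)$ is the complete $(p+1)$-partite graph whose $p+1$ parts all share the common size $(p-1)n$ — this is precisely the structure underlying the adjacency spectrum \eqref{spectrum-ZpxZp}. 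In particular every vertex has the same degree $(p^2-1)n - (p-1)n = p(p-1)n$, so $\Gamma_{nc}(G)$ is $p(p-1)n$-regular.

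First I would record the consequence of regularity in general. If $\mathcal{G}$ is $d$-regular then $D(\mathcal{G}) = dI$, so $L(\mathcal{G}) = dI - \mathcal{A}(\mathcal{G})$ and each Laplacian eigenvalue has the form $\mu = d - \lambda$ for a corresponding adjacency eigenvalue $\lambda$, multiplicities preserved. Moreover the average degree satisfies $\frac{2|e(\mathcal{G})|}{|v(\mathcal{G})|} = d$. Hence $\mu - \frac{2|e(\mathcal{G})|}{|v(\mathcal{G})|} = (d - \lambda) - d = -\lambda$, and summing absolute values over the spectrum gives $LE(\mathcal{G}) = \sum_{\lambda}|\lambda| = E(\mathcal{G})$.

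Applying this to $\mathcal{G} = \Gamma_{nc}(G)$, which is $p(p-1)n$-regular, yields $E(\Gamma_{nc}(G)) = LE(\Gamma_{nc}(G))$, and in particular $E(\Gamma_{nc}(G)) \leq LE(\Gamma_{nc}(G))$ as claimed. As a sanity check one may compute $LE$ directly from \eqref{spectrum-ZpxZp}: the adjacency eigenvalues $-(p-1)n,\ 0,\ p(p-1)n$ correspond to Laplacian eigenvalues $(p^2-1)n,\ p(p-1)n,\ 0$ respectively; the middle one coincides with the average degree $p(p-1)n$ and so contributes nothing, while the outer two contribute $p\cdot(p-1)n$ and $1\cdot p(p-1)n$, giving $LE = 2p(p-1)n$, in agreement with the value $E(\Gamma_{nc}(G)) = 2p(p-1)|Z(G)|$ from Theorem \ref{thm3}(c).

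The argument has essentially no obstacle: the whole content is the observation that the equal-part multipartite structure — which is exactly where the hypothesis $\frac{G}{Z(G)} \cong {\mathbb{Z}}_p \times {\mathbb{Z}}_p$ is used — forces $\Gamma_{nc}(G)$ to be regular, after which the identity $E = LE$ for regular graphs closes the proof. The only point requiring care is verifying that all $p+1$ centralizers of non-central elements genuinely have the common order $pn$, so that the parts are of equal size; this is a standard structural fact for groups with $\frac{G}{Z(G)} \cong {\mathbb{Z}}_p \times {\mathbb{Z}}_p$ and is already implicit in the form of \eqref{spectrum-ZpxZp}.
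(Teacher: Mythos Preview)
Your proof is correct and in fact more conceptual than the paper's. The paper simply quotes $LE(\Gamma_{nc}(G)) = 2p(p-1)|Z(G)|$ from \cite[Theorem 2.2]{DN} and compares it with the value of $E(\Gamma_{nc}(G))$ from Theorem~\ref{thm3}(c), observing that the two numbers coincide. You instead identify the structural reason for this coincidence: the complete $(p+1)$-partite graph with equal part sizes $(p-1)|Z(G)|$ is regular, and for any regular graph the identity $E = LE$ holds because $\mu - \frac{2|e|}{|v|} = -\lambda$ eigenvalue by eigenvalue. Your route avoids the external citation for the Laplacian energy and explains \emph{why} equality (rather than mere inequality) occurs; the paper's route is shorter on the page but relies on a computation carried out elsewhere. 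The one point worth stating more explicitly is the justification that every non-central element has centralizer of order exactly $p|Z(G)|$; this follows because for $x \notin Z(G)$ the image $\bar{x}$ generates a subgroup of order $p$ in $G/Z(G)$, and $C_G(x)/Z(G)$ must be a proper subgroup of $\mathbb{Z}_p \times \mathbb{Z}_p$ containing $\bar{x}$, hence equal to $\langle \bar{x}\rangle$.
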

\begin{proof}
If $\frac{G}{Z(G)} \cong {\mathbb{Z}}_p \times {\mathbb{Z}}_p$  then $LE(\Gamma_{nc}(G)) = 2p(p - 1)|Z(G)|$,   by \cite[Theorem 2.2]{DN}.  Therefore, by Theorem \ref{thm3}(c), we have $E(\Gamma_{nc}(G)) \leq LE(\Gamma_{nc}(G))$.    
%
%If $\frac{G}{Z(G)} \cong S_z(2)$ then   $LE(\Gamma_{nc}(G)) = \left(\frac{120|Z(G)|}{19}  + 30\right) |Z(G)|$,  by \cite[Theorem 2.1]{DN}. We have  
%\begin{align*}
% & 7 + \sqrt{109}  \leq  \frac{60|Z(G)|}{19} + 15\\
% \Longrightarrow & 2 |Z(G)| \left(7 + \sqrt{109}\right) \leq \left(\frac{120|Z(G)|}{19}. + 30\right) |Z(G)|.
%\end{align*}
%Therefore, by Theorem \ref{thm9}, we have $E(\Gamma_{nc}(G)) \leq LE(\Gamma_{nc}(G))$.    The particular case follows from the fact that  $S_z(2)$ has trivial center.      
\end{proof}

In \cite{DN}, $LE(\Gamma_{nc}(G))$ was computed if $\frac{G}{Z(G)} \cong D_{2m}$. However, the expression for $LE(\Gamma_{nc}(G))$ given in \cite[Theorem 2.4]{DN} is not correct and so all its corollaries. The correct expression for $LE(\Gamma_{nc}(G))$  is 
\begin{equation}\label{Theorem 2.4-DN-CV}
LE(\Gamma_{nc}(G)) =  \frac{2mn^2(m-1)(m-2) +2mn(2m - 1)}{2m-1},
\end{equation}
where $n = |Z(G)|$. Further, the corrected versions of \cite[Corollaries 2.5--2.7]{DN} are given by
%\begin{theorem} Theorem 2.4
% Let $G$ be a finite group such that $\frac{G}{Z(G)} \cong D_{2m}$ for $m \geq 2$. Then\\
% 
% $LE(\mathcal{A}_G)$ = $ \frac{2m(m-1)(m-2)|Z(G)|^2 +2m(2m - 1)|Z(G)|}{2m-1}$.
% \end{theorem}
%\begin{corollary} Corollary 2.5
% Let $M_{2mn}$ =  $\langle a,b: a^m = b^{2n} =1, bab^{-1}=a^{-1} \rangle$ be a metacyclic %group, where $m > 2$. Then \\
\begin{equation}
LE(\Gamma_{nc}(M_{2rs}) = \begin{cases}
\frac{2r(r-1)(r-2)s^2 +2rs(2r - 1)}{2r-1}, &  \text{ if $r$ is odd}\\
\frac{rs^2(r-2)(r-4) +2rs(r - 1)}{r-1}, &  \text{ if $r$ is even},
\end{cases}
\end{equation}
% If $m$ is even: \\
% 
% $LE(\mathcal{A}_{M_{2mn}})$ = $ $.
%\end{corollary}
%\begin{corollary} Corollary 2.6
% Let $D_{2m}$ =  $\langle a,b: a^m = b^2 =1, bab^{-1}=a^{-1} \rangle$ be the dihedral group of order $2m$, where $m > 2$. Then\\ 
% If $m$ is odd: \\
% 
\begin{equation}
LE(\Gamma_{nc}(D_{2m}) =  \begin{cases} 
\frac{2m(m-1)(m-2) +2m(2m - 1)}{2m-1}, & \text{ if $m$ is odd}\\
\frac{m(m-2)(m-4) +2m(m - 1)}{m-1}, &\text{ if $m$ is even}
\end{cases}
\end{equation}
% If $m$ is even: \\
% 
% $LE(\mathcal{A}_{M_{2mn}})$ = $ \frac{m(m-2)(m-4) +2m(m - 1)}{m-1}$.
%\end{corollary}
and 
%\begin{corollary} Corollary 2.7
% Let $Q_{4m}$ =  $\langle x,y: y^{2m} = 1, x^2 =y^m, yxy^{-1}=y^{-1} \rangle$ be the generalized quaternion group of order $4m$, where $m\geq 2$. Then\\ 
\begin{equation}
LE(\Gamma_{nc}(Q_{4m}) =  \frac{8m(m-1)(m-2) +4m(2m - 1)}{2m-1}.
\end{equation}
%\end{corollary}

\begin{theorem}\label{compare-2D2m}
Let $G$ be a finite group with center $Z(G)$. If $\frac{G}{Z(G)} \cong D_{2m}$ then $E(\Gamma_{nc}(G)) \leq LE(\Gamma_{nc}(G))$.
\end{theorem}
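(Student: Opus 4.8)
The plan is to combine the energy formula from Theorem~\ref{thm10} with the corrected Laplacian energy expression in \eqref{Theorem 2.4-DN-CV}, and reduce the desired inequality $E(\Gamma_{nc}(G)) \le LE(\Gamma_{nc}(G))$ to a single algebraic inequality in the two parameters $m$ and $n = |Z(G)|$. By Theorem~\ref{thm10} we have
\[
E(\Gamma_{nc}(G)) = n\left((m-1) + \sqrt{(m-1)(5m-1)}\right),
\]
while \eqref{Theorem 2.4-DN-CV} gives
\[
LE(\Gamma_{nc}(G)) = \frac{2mn^2(m-1)(m-2) + 2mn(2m-1)}{2m-1}.
\]
So the inequality to be established is
\[
n(m-1) + n\sqrt{(m-1)(5m-1)} \le \frac{2mn^2(m-1)(m-2) + 2mn(2m-1)}{2m-1}.
\]
After cancelling the common factor $n > 0$, the task is to show
\[
\sqrt{(m-1)(5m-1)} \le \frac{2mn(m-1)(m-2) + 2m(2m-1)}{2m-1} - (m-1).
\]
The main obstacle is the square root, so the first genuine step is to confirm that the right-hand side is nonnegative, which permits squaring without reversing the inequality.

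First I would verify the nonnegativity of the right-hand side: for $m \ge 2$ the quantity $2mn(m-1)(m-2) \ge 0$ and $2m(2m-1) > (m-1)(2m-1)$, so the bound $\tfrac{2mn(m-1)(m-2)+2m(2m-1)}{2m-1} - (m-1) > 0$ holds; the degenerate small cases $m = 2$ (where $D_{2m} = D_4$ is abelian, hence excluded) and $m = 3$ should be treated or noted separately, but for $m \ge 3$ the estimate is clean. Once nonnegativity is secured, I would square both sides and clear the denominator $(2m-1)^2$, turning the goal into a polynomial inequality
\[
(2m-1)^2 (m-1)(5m-1) \le \bigl(2mn(m-1)(m-2) + 2m(2m-1) - (m-1)(2m-1)\bigr)^2
\]
in the variables $m$ and $n$.

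Because $n \ge 1$ always and the right-hand side is monotonically increasing in $n$ (the term $2mn(m-1)(m-2)$ is nonnegative and grows with $n$ for $m \ge 3$), the most delicate case is $n = 1$; I expect the bulk of the work to lie in verifying the polynomial inequality there, after which larger $n$ follows by monotonicity. Concretely I would substitute $n = 1$, expand both sides as polynomials in $m$, and exhibit the difference (right minus left) as a sum of terms that are manifestly nonnegative for $m \ge 3$ — this mirrors the telescoping-into-squares strategy used in the proof of Theorem~\ref{compare-QD} and in Theorem~\ref{compare-pq}, where an auxiliary nonnegative polynomial is introduced and completed into a perfect square. The hard part will be finding the right grouping of terms so that the polynomial difference is transparently nonnegative; I anticipate completing a square of the form $\bigl(2m(m-2) + c\bigr)^2$ with an explicit correction term, analogous to the $(4n^2 - 10n + 7)$ and $(2^{n-1}-1)$ manipulations appearing earlier. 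Finally, I would appeal to Theorem~\ref{thm10} and \eqref{Theorem 2.4-DN-CV} to conclude $E(\Gamma_{nc}(G)) \le LE(\Gamma_{nc}(G))$ for all admissible $m$ and $n$.
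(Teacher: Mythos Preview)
Your proposal is correct and follows the same overall strategy as the paper: combine Theorem~\ref{thm10} with \eqref{Theorem 2.4-DN-CV}, isolate the square root, verify the right-hand side is nonnegative, square, and reduce to the polynomial inequality
\[
(2m-1)^2(m-1)(5m-1) \le \bigl(2mn(m-1)(m-2) + (2m-1)(m+1)\bigr)^2,
\]
noting that $2m(2m-1) - (m-1)(2m-1) = (2m-1)(m+1)$.

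The difference is in how this final polynomial inequality is dispatched. You propose to fix $n=1$, prove the inequality there, and then invoke monotonicity of the right-hand side in $n$. The paper instead handles all $n$ simultaneously: it observes the identity $(m-1)(5m-1) = 4m(m-2) + (m+1)^2$, so that after multiplying the elementary bound $(2m-1)^2 \le (2m-1)(m^2-1) \le mn^2(m-1)^2(m-2) + n(2m-1)(m^2-1)$ by $4m(m-2)$ and adding $(2m-1)^2(m+1)^2$, the right-hand side is exactly the perfect square $\bigl(2mn(m-1)(m-2) + (2m-1)(m+1)\bigr)^2$. This avoids the case analysis and the separate treatment of $n=1$; in particular, the completed-square argument you were anticipating is already there, but with $n$ still present rather than specialised away. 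Your monotonicity reduction is valid for $m \ge 3$, but note that at $m = 2$ the coefficient $2m(m-1)(m-2)$ vanishes, so the right-hand side is independent of $n$ and the inequality becomes the equality $E = LE = 4n$; the paper's argument covers this edge case uniformly.
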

\begin{proof}
%By \eqref{Theorem 2.4-DN-CV} we have LE($\Gamma_{nc}(G)$) = $\frac{2m(m - 2)(m-1)n^2 + 2mn(2m-1)}{2m-1}$, where $n = |Z(G)|$.  
%We have
%  \begin{allign*}
%  $4m \leq 2m^2$\\
%  
%  $5m-1 \leq 2m^2 +m -1$\\
%  
 %  $5m-1 \leq (2m-1)(m+1) \leq 4mn(2m-1)(m+1) $\\
%  
%  $(5m-1) (m-1)  \leq 4mn(2m-1)(m-1)(m-2)(m+1) $\\
 % \end{allign*}\\
%  
%  we know all terms  $4m^2n^2$, $(2m-1)^2, (m-1)^2, (m-2)^2$ and $(m+1)^2$ are positive then ,\\
%  
Since $m \geq 2$ we have $(2m-1)^2 \leq (2m-1)(m^2 - 1)$. Therefore,
\[
(2m-1)^2 \leq mn^2(m-1)^2(m-2) + n(2m-1)(m^2-1),
\]
where $n = |Z(G)|$.
Multiplying both sides by $4m(m-2)$ we get
\begin{align*}
& 4m(m-2)(2m-1)^2\leq 4m^2n^2(m-1)^2(m-2)^2 + 4mn(2m-1)(m-2)(m^2-1)\\
\Rightarrow & (2m-1)^2(m-1)(5m-1)  - (2m-1)^2(m+1)^2 \\
&~~~~~~~~~~~~~~~~~~~~~~~~~~~~~~~~~~\leq 4m^2n^2(m-1)^2(m-2)^2 + 4mn(2m-1)(m-2)(m^2-1) \\
\Rightarrow & (2m-1)^2(m-1)(5m-1) \leq  (2mn(m-1)(m-2)+ (2m-1)(m+1))^2.
\end{align*}
Therefore,
\begin{align*}
(2m-1)\sqrt{(m-1)(5m-1)}  & \leq 2mn(m-1)(m-2)+ (2m-1)(m+1)\\
& = 2mn(m-1)(m-2)+ 2m(2m-1)-(2m-1)(m-1).
\end{align*}
Hence,
\[
(2m-1)\left(\sqrt{(m-1)(5m-1)} + (m-1)\right)  \leq 2mn(m-1)(m-2)+ 2m(2m-1)
\]
and so the result follows from Theorem \ref{thm10} and \eqref{Theorem 2.4-DN-CV}.
%   $(2m-1)^2(5m-1) (m-1)  \leq 4m^2n^2(m-1)^2(m-2)^2+ (2m-1)^2(m+1)^2 + 4mn(2m-1)(m-1)(m-2)(m+1) $\\
%       $(2m-1)^2(5m-1) (m-1)  \leq (2mn(m-1)(m-2)+ (2m-1)(m+1))^2 $\\ 
%    
%   $(2m-1)\sqrt{(5m-1) (m-1)}  \leq 2mn(m-1)(m-2)+ (2m-1)(m+1)$\\ 
%   
%   $(2m-1)\sqrt{(5m-1) (m-1)}  \leq 2mn(m-1)(m-2)+ (2m-1)(2m-(m-1))$\\ 
%
%   $(2m-1)\sqrt{(5m-1) (m-1)}  \leq 2mn(m-1)(m-2)+ 2m(2m-1)-(2m-1)(m-1)$\\ 
%
%    $(2m-1)\sqrt{(5m-1) (m-1)} + (2m-1)(m-1)  \leq 2mn(m-1)(m-2)+ 2m(2m-1)$\\ 
%    
%    $(2m-1)(\sqrt{(5m-1) (m-1)} + (m-1))  \leq 2mn(m-1)(m-2)+ 2m(2m-1)$\\  
    
%     $n((m-1) +\sqrt{(5m-1) (m-1)})  \leq \frac{2mn^2(m-1)(m-2)+ 2mn(2m-1)}{2m-1}$\\  
 %   \end{allign*}  
%      By Theorem \ref{thm10} the proof is completed. 
\end{proof}

\begin{corollary}\label{compare-cor-1}
If $G = M_{2rs}, D_{2m}, Q_{4m}$ or $U_{6n}$  then $E(\Gamma_{nc}(G)) \leq LE(\Gamma_{nc}(G))$.
\end{corollary}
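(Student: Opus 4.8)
The plan is to derive Corollary~\ref{compare-cor-1} as a direct consequence of Theorem~\ref{compare-2D2m} together with the structural observation that each of the groups $M_{2rs}$, $D_{2m}$, $Q_{4m}$, and $U_{6n}$ has central quotient isomorphic to a dihedral group $D_{2k}$ for an appropriate $k$. First I would recall the defining relations of each group. The group $M_{2rs}=\langle a,b:a^r=b^{2s}=1,\,bab^{-1}=a^{-1}\rangle$ has the property that $Z(G)=\langle b^2\rangle$ (when $r$ is odd) or $\langle a^{r/2},b^2\rangle$-type center (when $r$ is even), and in either case the quotient $\frac{G}{Z(G)}$ is dihedral; this is precisely the setting encoded in Corollary~\ref{cor-M2rs}, where Theorem~\ref{thm10} was invoked with $m=r,n=s$ for odd $r$ and $m=\frac{r}{2},n=2s$ for even $r$. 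Similarly, $D_{2m}$ is itself a quotient by a trivial or order-two center (so $\frac{D_{2m}}{Z(D_{2m})}\cong D_{2m}$ for odd $m$ and $\cong D_m$ for even $m$), and $Q_{4m}$ satisfies $\frac{Q_{4m}}{Z(Q_{4m})}\cong D_{2m}$ with $|Z(Q_{4m})|=2$, while $U_{6n}$ satisfies $\frac{U_{6n}}{Z(U_{6n})}\cong D_6$ with $|Z(U_{6n})|=n$.

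The key step is therefore to verify, for each of the four families, that $\frac{G}{Z(G)}\cong D_{2m}$ for some $m\geq 2$, and then to quote Theorem~\ref{compare-2D2m}, which establishes $E(\Gamma_{nc}(G))\leq LE(\Gamma_{nc}(G))$ for every finite group whose central quotient is dihedral. Since Theorem~\ref{compare-2D2m} is stated at the level of full generality (any $G$ with $\frac{G}{Z(G)}\cong D_{2m}$), the corollary follows immediately once the central quotients are identified. The identifications $\frac{M_{2rs}}{Z}\cong D_{2r}$ or $D_{r}$, $\frac{Q_{4m}}{Z}\cong D_{2m}$, and $\frac{U_{6n}}{Z}\cong D_6$ are exactly the reductions already used to deduce Corollary~\ref{cor-M2rs} and the subsequent corollaries from Theorem~\ref{thm10}, so the spectral and energy data needed are consistent with what has been computed. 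In each case I would simply note the value of $m$ being used: $m=r$ or $\frac{r}{2}$ for $M_{2rs}$, $m$ or $\frac{m}{2}$ for $D_{2m}$, $m$ for $Q_{4m}$ (via $n=|Z|=2$), and $m=3$ for $U_{6n}$.

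The main obstacle, insofar as there is one, is purely bookkeeping: one must confirm that the parameter $m$ arising from the dihedral quotient satisfies $m\geq 2$ so that Theorem~\ref{compare-2D2m} genuinely applies, and that the edge-case families (for instance $U_{6n}$, where $m=3$ and $n=|Z(G)|$) fall under the same umbrella rather than requiring a separate argument. For $U_{6n}$ in particular, I would check that the energy $E(\Gamma_{nc}(U_{6n}))=2n(1+\sqrt7)$ and the corresponding Laplacian energy satisfy the inequality that Theorem~\ref{compare-2D2m} guarantees with $m=3$; substituting $m=3$ into the inequality $(2m-1)\left(\sqrt{(m-1)(5m-1)}+(m-1)\right)\leq 2mn(m-1)(m-2)+2m(2m-1)$ established in the proof of Theorem~\ref{compare-2D2m} should reproduce it directly. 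Thus no new analytic estimate is needed; the corollary is a clean specialization.

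\begin{proof}
In each case we identify the central quotient as a dihedral group and apply Theorem~\ref{compare-2D2m}. For $G=M_{2rs}$ we have $\frac{G}{Z(G)}\cong D_{2r}$ if $r$ is odd and $\frac{G}{Z(G)}\cong D_r$ if $r$ is even, as used in Corollary~\ref{cor-M2rs}. For $G=D_{2m}$ the quotient $\frac{G}{Z(G)}$ is itself dihedral. For $G=Q_{4m}$ we have $|Z(G)|=2$ and $\frac{G}{Z(G)}\cong D_{2m}$, and for $G=U_{6n}$ we have $\frac{G}{Z(G)}\cong D_6$ with $|Z(G)|=n$. In every case the central quotient is isomorphic to $D_{2k}$ for some $k\geq 2$, so Theorem~\ref{compare-2D2m} applies and yields $E(\Gamma_{nc}(G))\leq LE(\Gamma_{nc}(G))$.
\end{proof}
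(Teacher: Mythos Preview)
Your proof is correct and follows exactly the same approach as the paper: identify that $\frac{G}{Z(G)}$ is dihedral for each of $M_{2rs}$, $D_{2m}$, $Q_{4m}$, $U_{6n}$, and then invoke Theorem~\ref{compare-2D2m}. The paper's proof is a one-line version of what you wrote.
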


\begin{proof}
If $G = M_{2rs}, D_{2m}, Q_{4m}$ or $U_{6n}$  then $\frac{G}{Z(G)}$ is isomorphic to a dihedral group. Hence, the result follows from Theorem \ref{compare-2D2m}.
\end{proof}

\begin{corollary}\label{order-16}
Consider the following groups of order $16$:
${\mathbb{Z}}_2 \times D_8$,
${\mathbb{Z}}_2 \times Q_8$,
${\mathcal{M}}_{16}  = \langle x , y \mid  x^8 = y^2 = 1, yxy = x^5 \rangle$,
${\mathbb{Z}}_4 \rtimes {\mathbb{Z}}_4 = \langle x, y \mid  x^4 = y^4 = 1, yxy^{-1} = x^{-1} \rangle$,
$D_8 * {\mathbb{Z}}_4 = \langle x, y, c \mid  x^4 = y^2 = c^2 =  1, xy = yx, xc = cx, yc = x^2cy \rangle$ and
$SG(16, 3)  = \langle x, y \mid  x^4 = y^4 = 1, xy = y^{-1}x^{-1}, xy^{-1} = yx^{-1}\rangle$.
If $G$ is any of the above listed group then $E(\Gamma_{nc}(G)) \leq LE(\Gamma_{nc}(G))$.
\end{corollary}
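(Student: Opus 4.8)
The plan is to reduce every one of the six cases to Theorem \ref{compare-2} by proving that each listed group $G$ satisfies $\frac{G}{Z(G)} \cong \mathbb{Z}_2 \times \mathbb{Z}_2$. Once this is shown, setting $p = 2$ in Theorem \ref{compare-2} gives $E(\Gamma_{nc}(G)) \leq LE(\Gamma_{nc}(G))$ at once, so no separate spectral or Laplacian computation is needed. The whole argument therefore becomes a verification that, for each of these groups, the centre has order $4$ and the central quotient is the Klein four-group.

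First I would handle the two direct products. For $G = \mathbb{Z}_2 \times D_8$ and $G = \mathbb{Z}_2 \times Q_8$ one has $Z(G) = \mathbb{Z}_2 \times Z(D_8)$ and $Z(G) = \mathbb{Z}_2 \times Z(Q_8)$ respectively; since $Z(D_8)$ and $Z(Q_8)$ both have order $2$, the centre has order $4$ and $\frac{G}{Z(G)} \cong \frac{D_8}{Z(D_8)} \cong \frac{Q_8}{Z(Q_8)} \cong \mathbb{Z}_2 \times \mathbb{Z}_2$.

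For the remaining four groups the strategy is uniform: from the defining relations I would produce a central subgroup of order $4$, whence, because a non-abelian group cannot have cyclic central quotient, $|Z(G)| = 4$ and $\frac{G}{Z(G)} \cong \mathbb{Z}_2 \times \mathbb{Z}_2$ follows automatically. In $\mathcal{M}_{16}$ the relation $yxy = x^5$ gives $yx^2y = x^{10} = x^2$, so $\langle x^2\rangle \cong \mathbb{Z}_4$ is central. In $\mathbb{Z}_4 \rtimes \mathbb{Z}_4$ a short computation with $yxy^{-1} = x^{-1}$ shows $x^2, y^2 \in Z(G)$, and since $\langle x\rangle \cap \langle y\rangle = 1$ these generate a central $\mathbb{Z}_2 \times \mathbb{Z}_2$. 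In $D_8 * \mathbb{Z}_4$ the generator $x$ commutes with both $y$ and $c$, so $\langle x\rangle \cong \mathbb{Z}_4$ is central. In $SG(16,3)$ the relations force $x^2, y^2 \in Z(G)$ and $(xy)^2 = 1$; passing to $G/\langle x^2, y^2\rangle$ one sees this quotient is $\mathbb{Z}_2 \times \mathbb{Z}_2$, so comparing orders with $|G| = 16$ forces $x^2 \neq y^2$ and hence $\langle x^2, y^2\rangle \cong \mathbb{Z}_2 \times \mathbb{Z}_2$ is central of order $4$.

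The main obstacle is this last centre computation for $SG(16,3)$, and to a lesser extent for $D_8 * \mathbb{Z}_4$, where the defining relations must be manipulated with care to confirm that the central subgroup has order exactly $4$ rather than $2$. One clean way around the bookkeeping is to note that the six groups listed are precisely the non-abelian groups of order $16$ with $\frac{G}{Z(G)} \cong \mathbb{Z}_2 \times \mathbb{Z}_2$; the three remaining non-abelian groups of that order ($D_{16}$, $Q_{16}$, $SD_{16}$) instead have central quotient $D_8$ and would be covered by Theorem \ref{compare-2D2m}. Either way the conclusion is obtained by a single appeal to Theorem \ref{compare-2}.
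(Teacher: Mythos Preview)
Your proposal is correct and follows exactly the same route as the paper: the paper's proof consists of the single sentence ``For all the groups listed above $\frac{G}{Z(G)}$ is isomorphic to ${\mathbb{Z}}_2 \times {\mathbb{Z}}_2$. Hence, the result follows from Theorem \ref{compare-2}.'' You supply the centre computations that the paper omits, but the strategy is identical.
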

\begin{proof}
For all the  groups listed above $\frac{G}{Z(G)}$ is isomorphic to ${\mathbb{Z}}_2 \times {\mathbb{Z}}_2$. Hence, the result follows from Theorem \ref{compare-2}.
\end{proof}

\begin{corollary}
Let $p$ be any prime and let $G$ be a finite non-abelian group. If $G$ has one of the following properties then $E(\Gamma_{nc}(G)) \leq LE(\Gamma_{nc}(G))$.
\begin{enumerate}
\item $|G| = p^3$.
\item $|\cent(G)| = 4$.
\item $|\cent(G)| = 5$.
\item $|G| = p^n$ and $|\cent(G)| = (p + 2)$.
\end{enumerate}
Here,  $\cent(G) = \{C_G(x) : x \in G\}$.
\end{corollary}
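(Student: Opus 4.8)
The plan is to reduce every case to one of the two comparison results already proved, namely Theorem \ref{compare-2} (covering $\frac{G}{Z(G)} \cong \mathbb{Z}_p \times \mathbb{Z}_p$) and Theorem \ref{compare-2D2m} (covering $\frac{G}{Z(G)} \cong D_{2m}$). The bridge in each case is a classification of finite groups according to the number $|\cent(G)|$ of distinct element centralizers, which pins down the isomorphism type of the central quotient $\frac{G}{Z(G)}$; once that type is known, the desired inequality is a direct citation.

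For the first case I would argue directly: a non-abelian group of order $p^3$ has $|Z(G)| = p$, since $Z(G)$ is nontrivial (as $G$ is a $p$-group) while $|Z(G)| = p^2$ would make $\frac{G}{Z(G)}$ cyclic and hence $G$ abelian. Thus $\left|\frac{G}{Z(G)}\right| = p^2$ and $\frac{G}{Z(G)}$ is non-cyclic, so $\frac{G}{Z(G)} \cong \mathbb{Z}_p \times \mathbb{Z}_p$ and Theorem \ref{compare-2} applies.

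For the second and third cases I would invoke the classical classification of groups by centralizer count (in the sense of Belcastro and Sherman): $|\cent(G)| = 4$ forces $\frac{G}{Z(G)} \cong \mathbb{Z}_2 \times \mathbb{Z}_2$, whereas $|\cent(G)| = 5$ forces $\frac{G}{Z(G)} \cong \mathbb{Z}_3 \times \mathbb{Z}_3$ or $\frac{G}{Z(G)} \cong S_3 \cong D_6$. The first type, and the $\mathbb{Z}_3 \times \mathbb{Z}_3$ subcase, are handled by Theorem \ref{compare-2} (with $p = 2$ and $p = 3$ respectively), while the $D_6$ subcase is handled by Theorem \ref{compare-2D2m} with $m = 3$.

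The fourth case is the delicate one, and it is where the hypothesis $|G| = p^n$ does real work. In general the condition $|\cent(G)| = p + 2$ need \emph{not} force $\frac{G}{Z(G)} \cong \mathbb{Z}_p \times \mathbb{Z}_p$; for $p = 3$ the value $p+2 = 5$ is already realized by $D_6$, as the third case shows. However, since $G$ is a $p$-group the quotient $\frac{G}{Z(G)}$ is again a $p$-group, which eliminates every non-$p$-group candidate such as $D_6$. Appealing to the classification of $(p+2)$-centralizer groups, I would conclude that under the $p$-group hypothesis $|\cent(G)| = p + 2$ leaves only $\frac{G}{Z(G)} \cong \mathbb{Z}_p \times \mathbb{Z}_p$, after which Theorem \ref{compare-2} finishes. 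The main obstacle is quoting the converse direction of this classification with precision: recovering the isomorphism type of $\frac{G}{Z(G)}$ from the bare count $|\cent(G)|$ is substantially harder than the routine forward computation that $\frac{G}{Z(G)} \cong \mathbb{Z}_p \times \mathbb{Z}_p$ yields exactly $p+2$ centralizers (the $p+1$ preimages of the order-$p$ subgroups of $\mathbb{Z}_p \times \mathbb{Z}_p$ together with $G$ itself), and it is exactly here that the $p$-group hypothesis must be used to discard the sporadic dihedral-type quotients.
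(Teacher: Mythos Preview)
Your proposal is correct and follows essentially the same approach as the paper: reduce each hypothesis to $\frac{G}{Z(G)} \cong \mathbb{Z}_p \times \mathbb{Z}_p$ or $\frac{G}{Z(G)} \cong D_{2m}$ via the Belcastro--Sherman centralizer-count classification, then cite Theorem~\ref{compare-2} or Theorem~\ref{compare-2D2m}. The paper's own proof is far terser---it simply asserts (citing \cite{BS}) that in all four cases $\frac{G}{Z(G)}$ is isomorphic to $\mathbb{Z}_p \times \mathbb{Z}_p$ or $D_{2p}$ for some prime $p$---whereas you spell out each case and, in particular, explain why the $p$-group hypothesis in part~(d) is needed to exclude dihedral quotients.
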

\begin{proof}
If $G$ has one of the  properties listed above then
 $\frac{G}{Z(G)}$ is isomorphic to ${\mathbb{Z}}_p \times {\mathbb{Z}}_p$ or $D_{2p}$, for some prime $p$ (see \cite{BS}). Hence, the result follows from Theorem \ref{compare-2} and Theorem \ref{compare-2D2m}.
\end{proof}

\begin{corollary}
If $G$ has one of the following properties then $E(\Gamma_{nc}(G)) \leq LE(\Gamma_{nc}(G))$.
\begin{enumerate}
\item $\Pr(G) \in \{\frac{5}{14}, \frac{2}{5}, \frac{11}{27}, \frac{1}{2}, \frac{5}{8}\}$.
\item $\Pr(G) = \frac{p^2 + p - 1}{p^3}$ and $p$ is the smallest prime dividing $|G|$.
%\item $G$ is non-solvable and  $\Pr(G) = \frac{1}{12}$,
\end{enumerate}
Here, $\Pr(G)$ is the commuting probability of $G$.
\end{corollary}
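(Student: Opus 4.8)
The plan is to reduce each listed case to one of the two comparison results already proved, Theorem \ref{compare-2} for $\frac{G}{Z(G)} \cong \mathbb{Z}_p \times \mathbb{Z}_p$ and Theorem \ref{compare-2D2m} for $\frac{G}{Z(G)} \cong D_{2m}$. The one new ingredient required is a dictionary sending each prescribed value of $\Pr(G)$ to the isomorphism type of the central quotient $\frac{G}{Z(G)}$; once the type is identified, the desired inequality is exactly what those theorems assert.

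First I would compute the commuting probabilities attached to the two relevant families. Since both families consist of AC-groups, a direct conjugacy-class (equivalently, centralizer) count gives $\Pr(G) = \frac{p^2+p-1}{p^3}$ when $\frac{G}{Z(G)} \cong \mathbb{Z}_p \times \mathbb{Z}_p$, so that $p = 2$ yields $\frac{5}{8}$ and $p = 3$ yields $\frac{11}{27}$; and it gives $\Pr(G) = \frac{m+3}{4m}$ when $\frac{G}{Z(G)} \cong D_{2m}$ with $m$ odd, so that $m = 3, 5, 7$ yield $\frac{1}{2}, \frac{2}{5}, \frac{5}{14}$ respectively. Thus the five numbers in (a) are exactly the commuting probabilities produced by $\frac{G}{Z(G)} \in \{\mathbb{Z}_2 \times \mathbb{Z}_2, \mathbb{Z}_3 \times \mathbb{Z}_3, D_6, D_{10}, D_{14}\}$, and the number $\frac{p^2+p-1}{p^3}$ in (b) is the one produced by $\frac{G}{Z(G)} \cong \mathbb{Z}_p \times \mathbb{Z}_p$.

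Next I would invoke the converse: for each of these isolated values the central quotient is forced. I would cite the classification of finite groups by commuting probability in this range (see \cite{BS} and the references therein), which yields $\Pr(G) = \frac{5}{8} \Leftrightarrow \frac{G}{Z(G)} \cong \mathbb{Z}_2 \times \mathbb{Z}_2$, $\Pr(G) = \frac{11}{27} \Leftrightarrow \frac{G}{Z(G)} \cong \mathbb{Z}_3 \times \mathbb{Z}_3$, and $\Pr(G) \in \{\frac{1}{2}, \frac{2}{5}, \frac{5}{14}\}$ forcing $\frac{G}{Z(G)} \cong D_6, D_{10}, D_{14}$ respectively; together with the companion statement that, when $p$ is the smallest prime dividing $|G|$, one has $\Pr(G) = \frac{p^2+p-1}{p^3}$ if and only if $\frac{G}{Z(G)} \cong \mathbb{Z}_p \times \mathbb{Z}_p$. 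Feeding these identifications into Theorem \ref{compare-2} disposes of $\frac{5}{8}$, $\frac{11}{27}$ and all of part (b), while Theorem \ref{compare-2D2m} disposes of $\frac{1}{2}$, $\frac{2}{5}$ and $\frac{5}{14}$.

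The main obstacle is precisely this converse dictionary: proving that a single numerical value of $\Pr(G)$ determines $\frac{G}{Z(G)}$ up to isomorphism. The forward class counts are routine, but the uniqueness is the substantive point and rests on the classification of finite groups whose commuting probability lies in this comparatively large range; once that classification is granted, no estimation beyond Theorems \ref{compare-2} and \ref{compare-2D2m} is needed.
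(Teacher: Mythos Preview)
Your approach is essentially the same as the paper's: identify the isomorphism type of $G/Z(G)$ from the given value of $\Pr(G)$ and then invoke Theorem~\ref{compare-2} and Theorem~\ref{compare-2D2m}. The paper's own proof is a single sentence asserting that under either hypothesis $G/Z(G)$ is isomorphic to $\mathbb{Z}_2\times\mathbb{Z}_2$ or to some $D_{2m}$, and then citing those two theorems; your case-by-case dictionary (including $\mathbb{Z}_3\times\mathbb{Z}_3$ for $\tfrac{11}{27}$ and $\mathbb{Z}_p\times\mathbb{Z}_p$ for part~(b)) is in fact more accurate than the paper's terse summary, but the reduction is identical.
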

\begin{proof}
If $G$ has one of the  properties listed above then
 $\frac{G}{Z(G)}$ is isomorphic to ${\mathbb{Z}}_2 \times {\mathbb{Z}}_2$ or $D_{2m}$, for some $m$. Hence, the result follows from Theorem \ref{compare-2} and Theorem \ref{compare-2D2m}.
\end{proof}

\begin{proposition}
Let $G$ be a finite non-abelian group such that $\Gamma_{nc}(G)$ is   planar. Then $E(\Gamma_{nc}(G)) \leq LE(\Gamma_{nc}(G))$. 
\end{proposition}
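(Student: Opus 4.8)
The plan is to classify all finite non-abelian groups $G$ whose non-commuting graph $\Gamma_{nc}(G)$ is planar, and then verify the inequality $E(\Gamma_{nc}(G)) \leq LE(\Gamma_{nc}(G))$ for each resulting group by invoking the comparison results already established in this section. The key observation is that planarity is a very restrictive condition on $\Gamma_{nc}(G)$. Since $\Gamma_{nc}(G)$ is a complete multipartite graph for the groups treated here (and more generally has large complete-bipartite substructure whenever $G$ is non-abelian), the presence of a $K_5$ or $K_{3,3}$ minor will force $|G|$, and hence the number of non-central vertices, to be small. First I would recall that $\Gamma_{nc}(G)$ always contains $K_{|G\setminus Z(G)|/\text{(largest centralizer minus center)}}$-type complete multipartite structure; in particular any three pairwise non-commuting elements together with three others from distinct centralizer classes yield a $K_{3,3}$, so planarity bounds the number of independent vertex classes and their sizes sharply.

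Concretely, I would carry out the following steps. First, translate planarity into a numerical constraint: using Kuratowski/Wagner, a complete multipartite graph $K_{n_1,\dots,n_r}$ is planar only in a short explicit list of cases (essentially $r \leq 2$ with small parts, or a handful of small configurations such as $K_{1,1,1}$, $K_{2,2,2}$ being nonplanar). Second, I would cite the known classification (for instance via the work characterizing groups with planar non-commuting graph) to produce the finite list of admissible $G$; these turn out to be groups of very small order whose central quotient $G/Z(G)$ is isomorphic to $\mathbb{Z}_p \times \mathbb{Z}_p$ or to a small dihedral group $D_{2m}$. Third, for each group on the list I would apply Theorem~\ref{compare-2} (for the $\mathbb{Z}_p\times\mathbb{Z}_p$ quotient case) and Theorem~\ref{compare-2D2m} (for the $D_{2m}$ quotient case), which already give $E(\Gamma_{nc}(G)) \leq LE(\Gamma_{nc}(G))$, and handle any remaining sporadic small group by direct computation of its spectrum and Laplacian spectrum.

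\textbf{The main obstacle} I anticipate is establishing the classification cleanly, that is, showing that planarity of $\Gamma_{nc}(G)$ forces $G/Z(G)$ into the narrow family covered by the earlier comparison theorems, rather than producing some group of order $pq$ (where the conjectured inequality can fail, as shown in Theorem~\ref{compare-pq} and the following remark). The delicate point is precisely ruling out the Frobenius groups $F_{p,q}$: I would need to check that $\Gamma_{nc}(F_{p,q})$ is nonplanar for every admissible pair $(p,q)$, since its structure $K_{1.(p-1),\,p.(q-1)}$ contains a $K_{3,3}$ as soon as $p \geq 3$ and $q \geq 2$ with $p-1 \geq 3$, which holds whenever $q \mid (p-1)$ and $p > 3$. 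This guarantees that no counterexample of order $pq$ survives the planarity hypothesis, so the inequality holds uniformly. Once the classification is in hand, the verification step is routine: every surviving group falls under Theorem~\ref{compare-2} or Theorem~\ref{compare-2D2m}, and the result follows immediately.
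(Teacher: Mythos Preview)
Your proposal is correct and follows essentially the same route as the paper: invoke the known classification of groups with planar non-commuting graph (the paper simply cites \cite[Theorem~3.1]{AF14}, obtaining that $G/Z(G)$ is isomorphic to $D_6$, $D_8$, or $Q_8$) and then apply the comparison results already proved for dihedral and $\mathbb{Z}_p\times\mathbb{Z}_p$ central quotients. The paper bypasses your Kuratowski discussion and your concern about $F_{p,q}$ by quoting the classification directly, so no sporadic direct computations are needed.
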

\begin{proof}
If $\Gamma_{nc}(G)$ is   planar then, by \cite[Theorem 3.1]{AF14}, $\frac{G}{Z(G)}$ is isomorphic to $D_6, D_8$ or $Q_8$. Hence, the result follows from  Corollary \ref{compare-cor-1}.
\end{proof}

%\begin{proposition}
%Let $G$ be a finite non-abelian group such that the complement of $\Gamma_{nc}(G)$ is   planar. Then $E(\Gamma_{nc}(G)) \leq LE(\Gamma_{nc}(G))$.
%\end{proposition}
%\begin{proof}
%If $\Gamma_{nc}(G)$ is   planar then, by \cite[Theorem 3.1]{AF14}, $\frac{G}{Z(G)}$ is isomorphic to $D_6, D_8$ or $Q_8$. Hence, the result follows from  Corollary \ref{compare-cor-1}.
%\end{proof}

\begin{proposition}
Let $G$ be a finite non-abelian group such that the complement of $\Gamma_{nc}(G)$ is   planar. Then $E(\Gamma_{nc}(G)) \leq LE(\Gamma_{nc}(G))$.
\end{proposition}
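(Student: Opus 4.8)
The plan is to recognize that the complement of $\Gamma_{nc}(G)$ is exactly the commuting graph of $G$ (same vertex set $G \setminus Z(G)$, with $a$ adjacent to $b$ precisely when $ab = ba$), and then to invoke a classification of finite non-abelian groups whose commuting graph is planar, entirely in the spirit of the previous proposition where planarity of $\Gamma_{nc}(G)$ itself was used via \cite{AF14}. First I would record the structural reason such a classification must be short: for an AC-group the commuting graph is a disjoint union of cliques, one clique $C_G(x) \setminus Z(G)$ for each distinct centralizer, and planarity forbids a $K_5$, so every such clique has at most four vertices. This caps $|C_G(x)| - |Z(G)|$ at $4$ for each non-central $x$ and forces $G/Z(G)$ into a very small set of possibilities.

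Next I would appeal to the resulting classification (cf. \cite{AF14}), which I expect shows that whenever the complement of $\Gamma_{nc}(G)$ is planar, $G/Z(G)$ is isomorphic either to $\mathbb{Z}_p \times \mathbb{Z}_p$ for $p \in \{2,3\}$ or to a dihedral group $D_{2m}$ with $m$ small. Both regimes are already settled in this paper: the former by Theorem \ref{compare-2} and the latter by Theorem \ref{compare-2D2m}. Thus the proof would reduce to checking that every group produced by the classification has central quotient of one of these two shapes, after which $E(\Gamma_{nc}(G)) \leq LE(\Gamma_{nc}(G))$ follows immediately as a one-line appeal to Theorems \ref{compare-2} and \ref{compare-2D2m}.

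The hard part will be pinning the classification down precisely and making sure no group escapes the two covered families. The delicate points are (i) non-AC groups, where the commuting graph is not a clean disjoint union of cliques and planarity must instead be ruled out through $K_5$ or $K_{3,3}$ minors directly, and (ii) any sporadic small group whose central quotient is neither $\mathbb{Z}_p \times \mathbb{Z}_p$ nor dihedral. For such an exceptional group I would fall back on a direct computation of $\spec(\Gamma_{nc}(G))$ and $\lspec(\Gamma_{nc}(G))$ and compare $E$ with $LE$ by hand, exactly as was done for $V_{8n}$, $SD_{8n}$ and $QD_{2^n}$ earlier. Provided the classification lands entirely inside the dihedral and $\mathbb{Z}_p \times \mathbb{Z}_p$ families, the conclusion is then immediate from the two comparison theorems.
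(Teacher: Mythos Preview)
Your overall strategy---invoke the classification of groups with planar commuting graph and then feed each case into the comparison theorems already proved---is exactly the route the paper takes. However, your central expectation is wrong: the classification in \cite[Theorem 2.2]{AF14} (or \cite[Theorem 5.7]{das2}) does \emph{not} land entirely inside the dihedral-quotient and $\mathbb{Z}_p\times\mathbb{Z}_p$-quotient families. Besides $D_{2n}$ ($n=3,4,5,6$), $Q_{4m}$ ($m=2,3$) and the six order-$16$ groups of Corollary~\ref{order-16}, the list contains $A_4$, $A_5$, $Sz(2)$, $SL(2,3)$ and $S_4$. For each of these the central quotient is $A_4$, $A_5$, $Sz(2)$, $A_4$ and $S_4$ respectively---none of which is $\mathbb{Z}_p\times\mathbb{Z}_p$ or $D_{2m}$---so Theorems~\ref{compare-2} and~\ref{compare-2D2m} do not apply. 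Moreover $S_4$ is not an AC-group, so your ``delicate point (i)'' is genuinely realized, not merely hypothetical.

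What you flag as a fallback is therefore the main event: the paper devotes the bulk of this proof to computing, for each of $A_4$, $A_5$, $Sz(2)$, $SL(2,3)$ and $S_4$ separately, both $\spec(\Gamma_{nc}(G))$ and $\lspec(\Gamma_{nc}(G))$ (via \cite[Theorem 2.4]{DN-2018}, \cite{GG}, or GAP for $S_4$), then evaluating $E$ and $LE$ numerically and checking the inequality directly. Your proposal is a correct outline, but as written it treats as an afterthought what is in fact roughly eighty percent of the actual argument.
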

\begin{proof}
If the complement of $\Gamma_{nc}(G)$ is   planar then, by  \cite[Theorem 2.2]{AF14} or \cite[Theorem 5.7]{das2},  we have  $G$ is isomorphic to either $D_{2n}$ for $n = 3, 4, 5, 6$;   $Q_{4m}$ for $m = 2, 3$; $A_n$ for $n = 4, 5$; $S_z(2) = \langle x, y : x^5 = y^4 =1, y^{-1}xy=x^2\rangle$, $SL(2, 3), S_4$ or any group listed in Corollary \ref{order-16}.

If $G$ is isomorphic to either $D_{2n}$ for $n = 3, 4, 5, 6$;   $Q_{4m}$ for $m = 2, 3$; or any group listed in Corollary \ref{order-16} then by Theorem \ref{compare-2},  Corollary \ref{compare-cor-1} and Corollary \ref{order-16} we have 
\[
E(\Gamma_{nc}(G)) \leq LE(\Gamma_{nc}(G)).
\] 

If $G \cong A_4$ then, as shown in the proof of   \cite[Theorem 3.2]{DN1},  $\Gamma_{nc}(G)$ is the complement of $K_3 \sqcup 4K_2$. The characteristic polynomial of  $\mathcal{A}(\Gamma_{nc}(G))$ is given by $x^6(x + 2)^3(x^2 -6x -24) = 0$. Therefore, 
\[
\spec(\Gamma_{nc}(G)) = \{0^6, (-2)^3, (3 \pm \sqrt{33})^1\}
\]
 and so $E(\Gamma_{nc}(G)) =   6 + 2\sqrt{33}$.           
It is also observed that $A_4$ is an AC-group with six distinct centralizers of non-central elements, one having order $4$ and the rest having order $3$. Let $X_1, X_2, \dots, X_5$ be the distinct centralizers of non-central element of $A_4$ such that  $|X_1|= |X_2|=|X_3|=|X_4|= 3$ and $|X_5|=4$. Then, by  \cite[Theorem 2.4]{DN-2018}, we have 
\[
\lspec(\Gamma_{nc}(G)) = \{0^1, 8^2, 9^{4}, 11^{4}\}.
\]
Since, $|v(\Gamma_{nc}(G))|= 11$ and $|e(\Gamma_{nc}(G))|= 48$ we have $\frac{2|e(\Gamma_{nc}(G))|}{|v(\Gamma_{nc}(G))|} = \frac{96}{11}$. We have
             $|0-\frac{96}{11}|= \frac{96}{11}$, $|8-\frac{96}{11}|= \frac{8}{11}$, $|9-\frac{96}{11}|= \frac{3}{11}$ and $|11-\frac{96}{11}|= \frac{25}{11}$. Therefore, 
\[            
LE(\Gamma_{nc}(G)) = \frac{96}{11}+ 2\times\frac{8}{11}+ 4\times\frac{3}{22}+ 4\times\frac{25}{11}= \frac{224}{11}.
\]                 
Hence, $E(\Gamma_{nc}(G)) < LE(\Gamma_{nc}(G))$.

If $G \cong A_5$ then  by  \cite[Theorem 2.2]{GG}, noting that $A_5 = PSL(2, 4)$,  we  get
\[
\spec(\Gamma_{nc}(G))=\{ 0^{38}, (-4)^{5}, 2^9, 3^4, (x_1)^1, (x_2)^1, (x_3)^1\},
\]
where $x_1, x_2$ and $x_3$ are the roots of $x^3 -50x^2 - 324x -480 = 0$. Therefore,
\[
E(\Gamma_{nc}(G)) =  5 \times 4 + 9 \times 2 + 4 \times 3 +  |x_1| + |x_2|+ |x_3| = 50+  |x_1| + |x_2|+ |x_3|.
\]
Since  $x_1 \approx -2.467, x_2 \approx -3.478$ and $x_3 \approx 55.945$ we have 
\[
E(\Gamma_{nc}(G)) \approx 50 + 2.467 + 3.478 + 55.945 = 111.89.
\] 
Again, by  \cite[Proposition 4.3]{DN-2018} and  \cite[Proposition 3.3]{DN}, we get
\[
\lspec(\Gamma_{nc}(G))=\{0^1, (55)^{18}, (56)^{10}, (57)^{10}, (59)^{20}\}
\]
and $LE(\Gamma_{nc}(G)) = \frac{8580}{59}  \approx 145.423$. Therefore,  
$E(\Gamma_{nc}(G)) <  LE(\Gamma_{nc}(G))$.

If $G \cong Sz(2)$ then $Z(G) = \{1\}$. Therefore, as shown in the proof of   \cite[Theorem 2.2]{DN1},  $\Gamma_{nc}(G)$ is the complement of $K_4 \sqcup 5K_3$. The characteristic polynomial of  $\mathcal{A}(\Gamma_{nc}(G))$ is given by $x^{13}(x + 3)^4(x^2 - 12x - 60) = 0$. Therefore, 
\[
\spec(\Gamma_{nc}(G)) = \{0^{13}, (-3)^4, (6 \pm 4\sqrt{6})^1\}
\]  
and so $E(\Gamma_{nc}(G)) = 12 + 6 + 4\sqrt{6} - 6 + 4\sqrt{6} = 12 + 8\sqrt{6}$.

The distinct centralizers of non-central elements of $Sz(2)$ are given by 
\begin{align*} 
X_1 & = C_G(xy) = \{1, xy, x^4b^2, x^3y^3\}\\
X_2 & = C_G(x^2y) = \{1, x^2y, x^3y^2, xy^3\}\\
X_3 & = C_G(x^2y^3) = \{1, x^2y^3, xy^2, x^4y\}\\
X_4 & = C_G(y) = \{1, y, y^2, y^3\}\\
X_5 & = C_G(x^3y) = \{1, x^3y, x^2y^2, x^4y^3\}\\
X_6 & = C_G(x) = \{1, x, x^2, x^3, x^4\}.
\end{align*}
Therefore, $|X_1|= \cdots = |X_5|= 4$  and $|X_6|= 5$. By  \cite[Theorem 2.4]{DN-2018}, we have 
\[
\lspec(\Gamma_{nc}(G)) = \{0^1, (15)^{3}, (16)^{10}, (19)^5\}.
\]  
We have $|v(\Gamma_{nc}(G))|= 19$ and $|e(\Gamma_{nc}(G))|= 150$ we have $\frac{2|e(\Gamma_{nc}(G))|}{|v(\Gamma_{nc}(G))|} = \frac{300}{19}$. We have
             $|0-\frac{300}{19}|= \frac{300}{19}$, $|15 - \frac{300}{19}|=  \frac{15}{19}$, $|16-\frac{300}{19}|= \frac{4}{19}$ and $|19-\frac{300}{19}|= \frac{61}{19}$. Therefore,
\[            
LE(\Gamma_{nc}(G)) = \frac{300}{19}   +  3 \times \frac{15}{19} + 10 \times \frac{4}{19} + 5 \times  \frac{61}{19} =  \frac{690}{19}.
%\frac{300 + 45  + 40  + 305 }{19} = \frac{690}{19}.
\]             
Hence, $E(\Gamma_{nc}(G)) < LE(\Gamma_{nc}(G))$.              

If $G \cong SL(2, 3)$ then as shown in the proof of   \cite[Theorem 3.2]{DN1},  $\Gamma_{nc}(G)$ is the complement of $3K_2 \sqcup 4K_4$. The characteristic polynomial of  $\mathcal{A}(\Gamma_{nc}(G))$ is given by $x^{15}(x +2)^2(x + 4)^4(x^2 - 16x -48) = 0$. Therefore, 
\[
\spec(\Gamma_{nc}(G)) = \{0^{15}, (-2)^2, (-4)^3, (8 \pm 4\sqrt{7})^1\}
\]  
and so $E(\Gamma_{nc}(G)) = 16 + 8\sqrt{7}$. It is also observed that $SL(2, 3)$ is an AC-group with seven distinct centralizers of non-central elements, three having order $4$ and the rest having order $6$. Let $X_1, X_2, \dots, X_7$ be the distinct centralizers of non-central element of $SL(2, 3)$ such that  $|X_1|= |X_2|=|X_3|= 4$ and $|X_4|= \cdots = |X_7|=6$. Then, by  \cite[Theorem 2.4]{DN-2018}, we have 
\[
\lspec(\Gamma_{nc}(G)) = \{0^1, (18)^{12}, (20)^3, (22)^6\}.
\]  
Since, $|v(\Gamma_{nc}(G))|= 22$ and $|e(\Gamma_{nc}(G))|= 204$ we have $\frac{2|e(\Gamma_{nc}(G))|}{|v(\Gamma_{nc}(G))|} = \frac{204}{11}$. We have
             $|0-\frac{204}{11}|= \frac{204}{11}$, $|18 - \frac{204}{11}|= \frac{6}{11}$, $|20-\frac{204}{11}|= \frac{16}{11}$ and $|22-\frac{204}{11}|= \frac{38}{11}$. Therefore,
\[            
LE(\Gamma_{nc}(G)) = \frac{204}{11}   +  12 \times \frac{6}{11} + 3 \times \frac{16}{11} + 6 \times  \frac{38}{11} =  \frac{552}{11}.
%\frac{204 + 12\times 6 + 3\times 16 + 6 \times 38}{11} = \frac{990}{11} = 90.
\]             
Hence, $E(\Gamma_{nc}(G)) < LE(\Gamma_{nc}(G))$.             

If $G \cong S_4$ then, using GAP \cite{GAP2017}, we find  the characteristic polynomial of  $\mathcal{A}(\Gamma_{nc}(G))$ as given below: 
\[
x^{10}(x+2)^6(x^2+2x-4)^2(x^3-16x^2-76x-48) = 0 
\]
and so 
\[
\spec(\Gamma_{nc}(G)) = \{0^{10}, (-2)^6, (-1 + \sqrt{5})^2, (-1 - \sqrt{5})^2, (\alpha_1)^1, (\alpha_2)^1, (\alpha_3)^1\},
\]
where $\alpha_1, \alpha_2, \alpha_3$ are the roots of $x^3-16x^2-76x-48 = 0$. Therefore, 
\begin{align*}
E(\Gamma_{nc}(G)) & = 6 \times 2 + 2 \times (-1 + \sqrt{5})  +  2 \times (1 + \sqrt{5}) + |\alpha_1| + |\alpha_2| + |\alpha_3|\\
& = 12 + 4\sqrt{5} + |\alpha_1| + |\alpha_2| + |\alpha_3|.
\end{align*} 
Since $\alpha_1 \approx -0.758, \alpha_2 \approx -3.175$ and $\alpha_3 \approx 19.933$ we have $E(\Gamma_{nc}(G)) \approx 35.866 + 4\sqrt{5}$.

We have $\lspec(\Gamma_{nc}(G)^c) = \{0^5, (4 - \sqrt{13})^2, 1^3, 2^4, 3^6, 5^1,(4 + \sqrt{13})^2\}$, where $\Gamma_{nc}(G)^c$ is the commuting graph of $G$ (see \cite[Page 85]{DN3}). Therefore, by \cite[Theorem 2.2]{DN-2018}, we have
%Also, the characteristic polynomial of  $L(\Gamma_{nc}(G))$ is given by $x(x + )..... = 0$. Therefore, 
\[
%\lspec(\Gamma_{nc}(G)) = \{0^1, (23 - 4 - \sqrt{13})^2, (23 - 5)^1, (23 - 3)^6, (23 - %2)^4, (23 - 1)^3, (23 - 4 + \sqrt{13})^2, (23 - 0)^{4}\}
\lspec(\Gamma_{nc}(G)) = \{0^1, (19 - \sqrt{13})^2, (18)^1, (20)^6, (21)^4, (22)^3, (19 + \sqrt{13})^2, (23)^{4}\}.
\]
We have 
$|v(\Gamma_{nc}(G))|= 23$, $|e(\Gamma_{nc}(G))|= 228$ and   $\frac{2|e(\Gamma_{nc}(G))|}{|v(\Gamma_{nc}(G))|} = \frac{456}{23}$. Therefore,
$|0- \frac{456}{23}| = \frac{456}{23}$, 
$|19 - \sqrt{13} - \frac{456}{23}| = |- \frac{19}{23}- \sqrt{13}| = \frac{19}{23} + \sqrt{13}$, 
$|18 - \frac{456}{23}| = |-\frac{42}{23}| = \frac{42}{23}$, 
$|20- \frac{456}{23}|  = \frac{4}{23}$, 
$|21- \frac{456}{23}| =  \frac{27}{23}$, 
$|22- \frac{456}{23}| = \frac{50}{23}$,
$|19 + \sqrt{13} - \frac{456}{23}| = |- \frac{19}{23}+ \sqrt{13}| = - \frac{19}{23}+ \sqrt{13}$ and $|23 - \frac{456}{23}| = \frac{73}{23}$. Hence,  
\begin{align*}
LE(\Gamma_{nc}(G)) & =  \frac{456}{23} + 2 \times (\frac{19}{23} + \sqrt{13}) + \frac{42}{23} +  6 \times \frac{4}{23} +  4 \times \frac{27}{23} + 3 \times \frac{50}{23} + 2 \times (- \frac{19}{23}+ \sqrt{13}) + 4 \times \frac{73}{23}\\
& = \frac{456 + 42 + 24 + 108 + 150 + 292}{23}  + 4\sqrt{13} = \frac{1072}{23}  + 4\sqrt{13}.
\end{align*}
Thus, $E(\Gamma_{nc}(G)) < LE(\Gamma_{nc}(G))$.
This completes the proof.
\end{proof}

\begin{proposition}
Let $G$ be a finite non-abelian group such that the complement of $\Gamma_{nc}(G)$ is   toroidal or projective. Then $E(\Gamma_{nc}(G)) \leq LE(\Gamma_{nc}(G))$.
\end{proposition}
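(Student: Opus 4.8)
The plan is to follow the strategy of the two preceding propositions. Since the complement of $\Gamma_{nc}(G)$ is the commuting graph of $G$, I would first invoke a classification of the finite non-abelian groups whose commuting graph is toroidal or projective, in the same spirit as the planar classification \cite[Theorem 2.2]{AF14} (and \cite[Theorem 5.7]{das2}) used above. Such a classification yields a finite, explicit list of groups, obtained from the planar list by adjoining finitely many additional groups whose commuting graph has genus one or crosscap one but is non-planar. The problem thereby reduces to verifying $E(\Gamma_{nc}(G)) \leq LE(\Gamma_{nc}(G))$ for each group on this finite list.

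Next I would partition the list according to the structure of $\frac{G}{Z(G)}$. For every group with $\frac{G}{Z(G)} \cong {\mathbb{Z}}_p \times {\mathbb{Z}}_p$ or $\frac{G}{Z(G)} \cong D_{2m}$ — which covers all the dihedral groups $D_{2n}$, the groups $Q_{4m}$, and the order-$16$ groups of Corollary \ref{order-16} — the inequality is immediate from Theorem \ref{compare-2}, Theorem \ref{compare-2D2m}, Corollary \ref{compare-cor-1} and Corollary \ref{order-16}. This disposes of the bulk of the list with no new computation.

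For the remaining sporadic groups — the simple or non-AC groups on the list, such as $A_4$, $A_5$, $S_4$, $SL(2,3)$ and $Sz(2)$ already treated in the previous proposition, together with any further group appearing only in the toroidal/projective classification — I would compute $\spec(\Gamma_{nc}(G))$ and $\lspec(\Gamma_{nc}(G))$ directly. The adjacency spectrum comes either from a description of $\Gamma_{nc}(G)$ as the complement of a disjoint union of complete graphs (as for $A_4$, $SL(2,3)$ and $Sz(2)$) or from \texttt{GAP} \cite{GAP2017}; the Laplacian spectrum comes from the centralizer sizes via \cite[Theorem 2.4]{DN-2018} for AC-groups, and from the commuting-graph Laplacian spectrum via \cite[Theorem 2.2]{DN-2018} otherwise. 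From these I obtain $E(\Gamma_{nc}(G))$ and $LE(\Gamma_{nc}(G))$ and check $E \leq LE$, numerically when the governing characteristic polynomial is an irreducible cubic.

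The main obstacle I anticipate is not the inequality itself but securing a complete and correct classification of the groups whose commuting graph is toroidal or projective: the argument is only as strong as that finite list, and any group omitted from the cited classification would leave a genuine gap. A secondary difficulty is that a newly appearing sporadic group might fail to be an AC-group or to have $\frac{G}{Z(G)}$ dihedral, forcing a fresh direct computation of both spectra, exactly in the spirit of the $A_5$ and $S_4$ cases above.
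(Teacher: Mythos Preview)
Your overall plan matches the paper's approach exactly: invoke a classification, dispose of the easy cases via the earlier general theorems, and compute the remaining sporadic cases directly. The paper cites \cite[Theorem 3.3]{DN1} for the classification, which gives the list $D_{14}$, $D_{16}$, $Q_{16}$, $QD_{16}$, $F_{7,3}$, $D_6\times\mathbb{Z}_3$, $A_4\times\mathbb{Z}_2$.

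Where your proposal diverges is in the anticipated list. The toroidal/projective classification is \emph{disjoint} from the planar one (genus exactly one, not at most one), so none of $A_4$, $A_5$, $S_4$, $SL(2,3)$, $Sz(2)$ reappear here. Instead the groups that fall outside the $G/Z(G)\cong D_{2m}$ framework are $QD_{16}$ and $F_{7,3}$, handled by Theorem~\ref{compare-QD} and Theorem~\ref{compare-pq} respectively --- results you did not mention invoking. The two genuinely new direct-computation cases are $D_6\times\mathbb{Z}_3$ and $A_4\times\mathbb{Z}_2$; both are AC-groups whose commuting graphs are $3K_3\sqcup K_6$ and $4K_4\sqcup K_6$, so the adjacency spectrum comes from Lemma~\ref{lem7}/Theorem~\ref{thm1} (or a direct characteristic-polynomial computation) and the Laplacian spectrum from \cite[Theorem 2.4]{DN-2018}, exactly as your proposal outlines for the sporadic step. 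So your method is sound; only the concrete inventory of groups, and hence the specific earlier theorems to quote, needs correcting.
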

\begin{proof}
If the complement of $\Gamma_{nc}(G)$ is   toroidal or projective then by \cite[Theorem 3.3]{DN1} we have $G$ is isomorphic to either $D_{2n}$ for $n = 7, 8$; $Q_{16}, QD_{16}$, $F_{7, 3}$, $D_{6} \times {\mathbb{Z}}_3$ or $A_{4} \times {\mathbb{Z}}_2$.

If $G$ is isomorphic to either $D_{2n}$ for $n = 7, 8$; $Q_{16}$,  $QD_{16}$ or $F_{7, 3}$  then by Theorem \ref{compare-2},  Corollary \ref{compare-cor-1}, Theorem \ref{compare-QD} and Theorem \ref{compare-pq}  we have 
\[
E(\Gamma_{nc}(G)) \leq LE(\Gamma_{nc}(G)).
\]
%If $G \cong F_{7, 3}$  then, Theorem \ref{compare-pq}, it follows that $E(\Gamma_{nc}(G)) \leq LE(\Gamma_{nc}(G))$.   

Suppose that  $G \cong D_{6} \times {\mathbb{Z}}_3$.  It is easy to see that $D_{6} \times {\mathbb{Z}}_3$ is an AC-group with four centralizers of non-central elements namely, $X_1 = \{1, b\} \times {\mathbb{Z}}_3$, $X_2 = \{1, ab\} \times {\mathbb{Z}}_3$, $X_3 = \{1, a^2b\} \times {\mathbb{Z}}_3$ and $X_4 = \{1, a, a^2\} \times {\mathbb{Z}}_3$. Therefore, $|X_1| = |X_2| = |X_3| = 6$ and $|X_4| = 9$ and so, by  \cite[Lemma 2.1]{DN1}, $\Gamma_{nc}(G)$ is the complement of $3K_3\sqcup K_6$. Therefore,            
\[
\mathcal{A}(\Gamma_{nc}(G)) = \begin{bmatrix}
      0_{2\times 2}                  & J_{2 \times 3}\\
      J_{3 \times 2}       & (J - I)_{3 \times 3}
      \end{bmatrix}  \otimes   J_{3 \times 3} = B_{5 \times 5 } \otimes J_{3 \times 3}.
\]
By Lemma \ref{lem7} we have $\spec(J_{3 \times 3})= \{0^2, 3^1\}$ and $\spec(B_{5 \times 5})= \{0^1, (-1)^2 , (1\pm \sqrt{7})^1\}$. Therefore, by Theorem \ref{thm1}, we have $\spec(\Gamma_{nc}(G))=\{0^{11}, (-3)^2, (3 \pm 3\sqrt{7})^1\}$ and so $E(\Gamma_{nc}(G)) = 6 + 6\sqrt{7}$.
%$E(\Gamma_{nc}(G)) =  21.874$.

By  \cite[Theorem 2.4]{DN-2018}, we have  $\lspec(\Gamma_{nc}(G))=\{0^1, 9^5, 12^6, 15^3\}$. We have $|v(\Gamma_{nc}(G))|= 15$, $|e(\Gamma_{nc}(G))|= 81$  and so $\frac{2|e(\Gamma_{nc}(G))|}{|v(\Gamma_{nc}(G))|}= \frac{162}{15}$. Therefore,
             $|0-\frac{162}{15}|= \frac{162}{15}$, $|9-\frac{162}{15}|= \frac{27}{15}$, $|12-\frac{162}{15}|= \frac{18}{15}$ and $|15-\frac{162}{15}|= \frac{63}{15}$. Hence,
\[            
LE(\Gamma_{nc}(G))= \frac{162}{15}+ 5\times\frac{27}{15}+ 6\times\frac{18}{15}+ 3 \times \frac{63}{15}= \frac{594}{15}. 
\]
Thus, $E(\Gamma_{nc}(G)) < LE(\Gamma_{nc}(G))$.

Suppose that $G \cong A_{4} \times {\mathbb{Z}}_2$.  It can be seen that $G \cong A_{4} \times {\mathbb{Z}}_2$ is an AC-group with five centralizers of non-central elements namely, $X_1 = \{1, y, y^2\} \times {\mathbb{Z}}_2$, $X_2 = \{1, xy, y^2x\} \times {\mathbb{Z}}_2$, $X_3 = \{1, yx, xy^2\} \times {\mathbb{Z}}_2$,  $X_4 = \{1, xyx, yxy\} \times {\mathbb{Z}}_2$ and $X_5 = \{1, x,  yxy^2, y^2xy\} \times {\mathbb{Z}}_2$ since $A_4 = \langle x, y : x^2 = y^3 = (xy)^3 = 1\rangle$. Therefore, $|X_1| = |X_2| = |X_3| = |X_4|= 6$ and $|X_5| = 8$ and so, by  \cite[Lemma 2.1]{DN1}, $\Gamma_{nc}(G)$ is the complement of $4K_4\sqcup K_6$. Therefore, the characteristic polynomial of  $\mathcal{A}(\Gamma_{nc}(G))$ is given by 
\[
x^{17}(x+4)^3(x^2 -12x-96) = 0 
\]
and so 
\[
\spec(\Gamma_{nc}(G)) = \{0^{17}, (-4)^3, (6 \pm 2\sqrt{33})^1\}.
\]
Hence, $E(\Gamma_{nc}(G)) = 12 +  6 + 2\sqrt{33} -6 + 2\sqrt{33} = 12 + 4\sqrt{33}$. 

By  \cite[Theorem 2.4]{DN-2018}, we have  $\lspec(\Gamma_{nc}(G))=\{0^1, 16^{5}, 18^{12},  22^4\}$. We have $|v(\Gamma_{nc}(G))|= 22$, $|e(\Gamma_{nc}(G))|= 192$  and so $\frac{2|e(\Gamma_{nc}(G))|}{|v(\Gamma_{nc}(G))|}=  \frac{192}{11}$. Therefore,
$|0 - \frac{192}{11}| = \frac{192}{11}$, $|16 - \frac{192}{11}| = \frac{16}{11}$, $|18 - \frac{192}{11}| = \frac{6}{11}$ and $|22 - \frac{192}{11}| = \frac{50}{11}$. Hence,
\[
LE(\Gamma_{nc}(G)) = \frac{192}{11} + 5 \times \frac{16}{11} + 12 \times \frac{6}{11} + 4 \times \frac{50}{11} = \frac{544}{11}.
\]
Thus, $E(\Gamma_{nc}(G)) < LE(\Gamma_{nc}(G))$. This completes the proof.
\end{proof}
We conclude this paper with the following result. 
 \begin{theorem}\label{thm17}
If $G = A(n,v)$ or $A(n,p)$  then 
      $E(\Gamma_{nc}(G)) = LE(\Gamma_{nc}(G))$.
\end{theorem}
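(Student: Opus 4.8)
The plan is to exploit the fact that both non-commuting graphs in question are \emph{regular}, for which it is classical that energy and Laplacian energy coincide. By Theorem \ref{thm12} (resp.\ Theorem \ref{thm13}), $\Gamma_{nc}(A(n,v))$ is the complete $(2^n-1)$-partite graph all of whose parts have size $2^n$ (resp.\ $\Gamma_{nc}(A(n,p))$ is complete $(p^n+1)$-partite with all parts of size $p^{2n}-p^n$). A complete multipartite graph with $r$ parts all of the same cardinality $s$ is regular, since every vertex is adjacent to precisely the vertices lying outside its own part; the common degree is $k = (r-1)s$. Hence $\Gamma_{nc}(A(n,v))$ is $2^n(2^n-2)$-regular and $\Gamma_{nc}(A(n,p))$ is $p^n(p^{2n}-p^n)$-regular, and in each case $k$ coincides with the largest adjacency eigenvalue recorded in Theorem \ref{thm12} and Theorem \ref{thm13}.

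First I would record the standard relation between the adjacency and Laplacian spectra of a $k$-regular graph $\mathcal{G}$. Since $L(\mathcal{G}) = D(\mathcal{G}) - \mathcal{A}(\mathcal{G}) = kI - \mathcal{A}(\mathcal{G})$, the Laplacian eigenvalues are exactly the numbers $k-\lambda$ as $\lambda$ ranges over $\spec(\mathcal{G})$, with the same multiplicities. Moreover, for a $k$-regular graph we have $2|e(\mathcal{G})| = k\,|v(\mathcal{G})|$, so that $\frac{2|e(\mathcal{G})|}{|v(\mathcal{G})|} = k$.

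Combining these two facts, for $G = A(n,v)$ or $A(n,p)$ I would compute
\[
LE(\Gamma_{nc}(G)) = \sum_{\mu \in \lspec(\Gamma_{nc}(G))} \left|\mu - \frac{2|e|}{|v|}\right| = \sum_{\lambda \in \spec(\Gamma_{nc}(G))} \bigl|(k-\lambda) - k\bigr| = \sum_{\lambda \in \spec(\Gamma_{nc}(G))} |\lambda| = E(\Gamma_{nc}(G)),
\]
which is precisely the claimed equality. As a consistency check one may substitute the explicit spectra from Theorem \ref{thm12} and Theorem \ref{thm13}: the eigenvalue $k$ yields the Laplacian eigenvalue $0$, contributing $|0-k| = k$; the negative eigenvalue contributes its absolute value with the stated multiplicity; and the zero eigenvalues contribute nothing. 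This reproduces $2^{n+1}(2^n-2)$ and $2p^{2n}(p^n-1)$ respectively, matching $E(\Gamma_{nc}(G))$.

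There is no hard analytic step here: the only point requiring care is the regularity claim, namely that the two Hanaki groups really do yield complete multipartite graphs with all parts of equal cardinality. This is exactly what Theorem \ref{thm12} and Theorem \ref{thm13} (via \cite[Lemma 4.2]{Fsn-19}) supply, so the substantive work has already been done, and the remainder is just the regular-graph identity $LE = E$ displayed above.
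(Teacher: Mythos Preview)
Your argument is correct. Both non-commuting graphs are indeed complete multipartite with parts of equal size (this is precisely what is established in the proofs of Theorem~\ref{thm12} and Theorem~\ref{thm13}), hence $k$-regular, and for any $k$-regular graph the Laplacian is $kI-\mathcal{A}$ while the average degree $2|e|/|v|$ equals $k$, so the Laplacian energy and the ordinary energy agree by the one-line computation you give.

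The paper's own proof is different in spirit: it simply quotes the explicit numerical values of $LE(\Gamma_{nc}(G))$ from \cite[Propositions~3.5 and~3.6]{DN} and compares them with the values of $E(\Gamma_{nc}(G))$ obtained in Theorem~\ref{thm12} and Theorem~\ref{thm13}, checking that $2^{2n+1}-2^{n+2}=2^{n+1}(2^n-2)$ and $2(p^{3n}-p^{2n})=2p^{2n}(p^n-1)$. Your route is more conceptual: it explains \emph{why} equality holds (regularity forces $E=LE$ in general) rather than merely verifying it, and it avoids dependence on the external computation of the Laplacian energy in \cite{DN}. The paper's approach, on the other hand, is shorter once those references are in hand and requires no structural observation about the graphs beyond what was already used to compute their spectra.
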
 
\begin{proof}
If $G = A(n,v)$ then by \cite[Proposition 3.5]{DN} and Theorem \ref{thm12}, we have  
\[
LE(\Gamma_{nc}(G)) = 2^{2n+1} - 2^{n+2} = 2^n(2^{n + 1} -4) = E(\Gamma_{nc}(G)).
\]
If $G = A(n,p)$ then  by \cite[Proposition 3.6]{DN} and Theorem \ref{thm13}, we have  
\[
LE(\Gamma_{nc}(G)) = 2(p^{3n} - p^{2n}) = 2p^{2n}(p^{n} - 1) = E(\Gamma_{nc}(G)).
\]
Hence, the result follows.
\end{proof}

\noindent {\bf Acknowledgements}
%The authors would like to thank Prof. William C. Jagy, Prof. Dmitry Ezhov, Prof. J. W. Tanner and others for their contributions towards the development of Section 3 of this paper. 
Discussions with Prof. William C. Jagy, Prof. Dmitry Ezhov, Prof. J. W. Tanner and others through https://math.stackexchange.com were helpful  in developing  Section 3 of this paper. The authors are thankful to them.    The first author is thankful to Indian Council for Cultural Relations for the ICCR Scholarship.  

%\noindent\textbf{Funding}
%
%\noindent The authors received no financial support for this article.
%
%\noindent \textbf{Availability of data and materials}
%
%\noindent Not applicable.
%
%\noindent \textbf{Competing interests}
%
%\noindent The authors declare that they have no competing interests.
%
%\noindent \textbf{Authors’ contributions}
%
%\noindent The authors contributed equally to the manuscript and read and approved the final manuscript.
%
%\noindent \textbf{Author details}
%
%\noindent Walaa Nabil Taha Fasfous (First Author),   	
%Department of Mathematical Sciences, Tezpur University, Napaam-784028, Sonitpur, Assam, India. E-mail: w.n.fasfous@gmail.com
%
%\noindent Rajat Kanti Nath (Corresponding Author),
%Department of Mathematical Sciences, Tezpur University, Napaam-784028, Sonitpur, Assam, India. E-mail: rajatkantinath@yahoo.com 

\end{document}